\newtheorem{theorem}{Theorem}
\newtheorem{lemma}[theorem]{Lemma}
\newtheorem{proposition}[theorem]{Proposition}
\newtheorem{definition}[theorem]{Definition}
\newtheorem{remark}[theorem]{Remark}
\newtheorem{assumption}[theorem]{Assumption}
\numberwithin{equation}{section}
\numberwithin{theorem}{section}
\newcommand\gam{\gamma}
\newcommand\Lc{\mathcal{L}}
\newcommand\Uc{\mathcal{U}}
\newcommand{\Eb}{\mathbb{E}}
\newcommand{\Fb}{\mathbb{F}}
\newcommand{\Pb}{\mathbb{P}}
\newcommand{\Rb}{\mathbb{R}}
\newcommand{\dd}{\mathrm{d}}
\newcommand{\oc}{\overline{c}}
\begin{document}
	
	\title{Optimal Dividend, Reinsurance, and Capital Injection Strategies \\ for an Insurer with Two Collaborating Business Lines}
	
\author{
Tim J. Boonen\thanks{Department of Statistics and Actuarial Science, School of Computing and Data Science, University of Hong Kong, China. Email: \texttt{tjboonen@hku.hk}.}
\and
Engel John C. Dela Vega\thanks{Department of Statistics and Actuarial Science, School of Computing and Data Science, University of Hong Kong, China. Email: \texttt{ejdv@hku.hk}.}
\and
Bin Zou\thanks{Department of Mathematics, University of Connecticut, USA.
E-mail: \texttt{bin.zou@uconn.edu}.}
}
	
	\date{}	
	\maketitle
	
	\begin{abstract}
		This paper considers an insurer with two collaborating business lines, and the risk exposure of each line follows a diffusion risk model. The manager of the insurer makes three decisions for each line: (i) dividend payout, (ii) (proportional) reinsurance coverage, and (iii) capital injection (from one line into the other). 
        The manager seeks an optimal dividend, reinsurance, and capital injection strategy to maximize the expected weighted sum of the total dividend payments until the first ruin. 
        We completely solve this problem and obtain the value function and optimal strategies in closed form. We show that the optimal dividend strategy is a threshold strategy, and the more important line always has a lower threshold to pay dividends. The optimal proportion of risk ceded to the reinsurer is decreasing with respect to the aggregate reserve level for each line, and capital injection is only used to prevent the ruin of a business line. 
        Finally, numerical examples are presented to illustrate the impact of model parameters on the optimal strategies.
	\end{abstract}
	
	\noindent

\section{Introduction}
\label{sec:intro}

    The optimal dividend problem is a fundamental topic in actuarial science and financial mathematics. It significantly influences how financial institutions, particularly banks and insurers, allocate their resources to meet shareholders' expectations while ensuring sufficient reserves to cover potential future liabilities. The foundations of this topic were first established in a seminal work by \cite{definetti1957}, wherein the model considers a company with one business line and seeks an optimal dividend strategy to maximize the expected discounted total dividends until the \emph{ruin time}. In this work, we extend de Finetti's model by considering a company (insurer) with two business lines and by allowing the manager of the company to purchase reinsurance for each line and inject capital from one line into the other.
    
    Let us briefly recap the essential features and results of de Finetti's model. He models the insurer's (uncontrolled) reserve by a simple random walk and assumes that the only control is the dividend payout; the optimal dividend strategy is shown to be a \emph{barrier} strategy, under which dividends are paid only when the company's reserve exceeds the barrier. Various studies confirm that barrier strategies remain optimal when the insurer's reserve process follows the classical Cram\'{e}r-Lundberg (CL) model (that is, a compound Poisson model) or the diffusion model (that is, a Brownian motion with positive drift); see, for instance, \cite{gerber1969} for the CL model and \cite{gerber2004} for the diffusion model. Stochastic control theory first found applications in the study of optimal dividend in 1990s and quickly became the main toolbox. \cite{jeanblanc1995optimization} and \cite{asmussen1997} are among the earlier contributions along this direction, with both adopting the diffusion model; for the CL model, see  \cite{azcue2005}. Since then, numerous extensions and variations have emerged; we refer the reader to \cite{albrecher2009} and \cite{avanzi2009} for comprehensive reviews and to \citet[][Chapters 2.4 \& 2.5]{schmidli2007book} for a textbook discussion on the application of stochastic control theory to optimal dividend problems.

Much of the literature on optimal dividend considers an insurer with one business line (univariate reserve process), but almost all insurers operate various business lines, such as automobile insurance, property and casualty insurance, health insurance, and life insurance. This discrepancy motivates a recent strand of literature to propose a multivariate process to model the insurer's different business lines; see \citet[][Chapter XIII.9]{asmussenbook2010} for an overview of risk theory in the dynamics of multiple insurers or business lines. Recall that a typical optimal dividend problem optimizes until the ruin time; for a univariate reserve process, the definition of ruin time is unique, but that is not the case when a multivariate reserve is considered. For the latter scenario, common definitions of ruin include: (i) \emph{first} ruin time: the first time that one of the reserve levels falls below zero; (ii) \emph{last} ruin time: the first time that all of the reserve levels, not necessarily simultaneously, fall below zero; (iii) \emph{simultaneous} ruin time: the first time that all of the reserve levels are below zero simultaneously; and (iv) \emph{sum} ruin time: the first time that the total of all of the reserve levels falls below zero. Once the definition of ruin is given, the goal is often to minimize the ruin probability, or equivalently, to maximize the survival probability. Since it is challenging to obtain closed-form solutions, most papers derive asymptotic results, approximations, bounds, or viscosity solutions; see \cite{azcue2013}, \cite{ivanovs2015} and \citet{grandits2025}. However, there are exceptions, where closed-form solutions are successfully obtained, under certain model assumptions; see, for instance, \citet{avram2008} and \citet{badescu2011}.

As seen from the above discussion, the research on optimal dividend with a multivariate reserve process often aims to minimize the ruin probability. But the classical model of \cite{definetti1957} maximizes the expected total (discounted) dividends until ruin; such an objective is more challenging to handle in the multivariate case, and consequently, related results are rather limited. An earlier work on multivariate optimal dividend problems is \cite{czarna2011}, where the reserve levels are modeled as compound Poisson processes. They consider two types of strategies: a barrier strategy with the initial reserve level serving as the barrier and a \emph{threshold} strategy, wherein dividends are paid out continuously at a fixed rate whenever the reserve level exceeds the threshold value, which is the weighted sum of the reserve levels. 
The majority of the subsequent work on multivariate optimal dividend payout problems uses compound Poisson processes to model reserve levels \citep[see][]{liucheung2014, albrecher2017, azcue2019, azcuemuler2021, strietzel2022}, while fewer papers explore diffusion models \citep[see][]{gu2018,grandits2019saj,yang2025}.

In addition to finding an optimal dividend strategy, it is of interest for insurers to manage some of the risk in the form of reinsurance. Managing the risk exposure in the multivariate case via reinsurance has also been explored. Proportional reinsurance in the form of sharing payments in fixed proportions for every incoming claim is studied in \cite{czarna2011} (for equal proportion) and in \cite{liucheung2014,azcue2019,strietzel2022} (for a general proportion). \cite{yang2025} further solve for the optimal reinsurance proportion levels in the multivariate case.

In the case of multivariate risks, capital injection from one business line to another becomes an important decision, since it can potentially save stressed business lines by using available capital from solvent business lines with adequate reserves. Such a practice is referred to as \emph{collaboration}. The studies on collaboration without transfer costs can be found in \cite{albrecher2017,gu2018,grandits2019saj}. For the case with transfer costs, see \cite{gu2018}.

With the background explained above, we now proceed to elaborate on the model and results of this paper. Our research agenda is to study an optimal dividend problem, featuring additional reinsurance and capital injection controls. To that end, we consider an insurer (insurance company) with two collaborating business lines and assume that the insurer hires one manager for both lines. The manager is in charge of making three types of decisions for each line: (1) dividend payout, (2) (proportional) reinsurance, and (3) capital injection. 
The manager's decisions are subject to some practical constraints. First, the dividend rate is bounded above by a fixed rate, called the \emph{maximum dividend rate}. Second, the proportion of risk ceded to the reinsurer is between 0 (corresponding to \emph{zero reinsurance}) and 1 (corresponding to \emph{full reinsurance}). Lastly, capital injection is modeled as a singular type of control, since such transfers are only used to save a business that would otherwise go bankrupt. 
We apply a diffusion model for the risk exposure of each business line, and we allow both positive and negative correlations between the two business lines. We assume that there is a reinsurer who is willing to offer proportional contracts for the insurer's two businesses, and it adopts the expected value premium principle to price reinsurance contracts.
The goal of the manager is to seek an optimal dividend payout, reinsurance, and capital injection strategy that maximizes the expected weighted sum of the dividend payments until the first ruin time. 
To the best of our knowledge, this is the first study on an optimal dividend problem involving bounded dividend rates, proportional reinsurance, and capital injection between two collaborating business lines under the diffusion model. 

We summarize the main contributions of this paper as follows:
\begin{enumerate}

	\item We identify three scenarios and derive, in closed form, the value function and the optimal strategy for each scenario (see Theorems \ref{prop:w0<u1<u2}, \ref{prop:u1<w0<u2}, and \ref{prop:u1<u2<w0}). The conditions for each scenario relate to whether the sum of the maximum dividend rates of the lines is ``large enough" or the maximum dividend rate of the more important line (i.e., the line with a bigger weight in the objective) is ``large enough.'' This extends the results in \cite{hojgaard1999}, which present two scenarios depending on whether the maximum dividend rate is sufficiently large.\footnote{In \cite{hojgaard1999}, the authors discuss the univariate optimal dividend payout problem with proportional reinsurance (but without capital injection) under the diffusion model.}

    	\item We show that the optimal dividend strategy, under the constraint of bounded dividend rates, is a threshold strategy, which extends the result within the univariate framework (see, for example, \cite{asmussen1997}). Compared with \cite{czarna2011}, who also study a multivariate optimal dividend problem, the threshold levels therein depend on the initial reserve levels of individual lines, but the thresholds in our paper are independent of the initial reserve levels. Instead, they depend on the weights associated with each business line in the objective; specifically, the business line with a greater weight always has a lower threshold to distribute dividends at the maximum rate.

    \item We prove that the optimal reinsurance strategy (proportion of risk ceded to the reinsurer) is decreasing with respect to the aggregate reserve level, and the two proportions are both constants simultaneously, except for the case when one business line takes a full reinsurance contract at all times.

\item On the technical side, the value function can have up to three switching points (see \eqref{eqn:g first} and \eqref{eqn:candidate 3}), and in turn, we encounter three systems of equations, together ensuring the ``smoothness'' of the value function. This significantly increases the complexity in analysis and requires a different approach from the standard univariate optimal dividend problems; see the last paragraph in Remark \ref{remark: theorem 1} for detail.

\end{enumerate}

The rest of the paper is organized as follows: Section \ref{sec:model} introduces the model and formulates the main problem. Section \ref{sec:sol} states the main results. Section \ref{sec numerical} presents the numerical examples. The proofs of the main results are included in Section \ref{section:proof of main}. Section \ref{sec:conclusion} concludes.

	\section{Model}
	\label{sec:model}
	
	We fix a complete filtered probability space $(\Omega,\mathcal{F},\mathbb{F},\mathbb{P})$, where $\mathbb{F}:=\{\mathcal{F}(t)\}_{t\geq 0}$ is a right-continuous, $\mathbb{P}$-completed filtration generated by two independent Brownian motions $B_1 = \{B_1(t)\}_{t \ge 0}$ and $B_2 = \{B_2(t)\}_{t \ge 0}$. Define two correlated Brownian motions $W_1 = \{W_1(t)\}_{t \ge 0}$ and $W_2 = \{W_2(t)\}_{t \ge 0}$ by 
	\begin{align*}
		W_1(t) = B_1(t) \quad \text{and} \quad W_2(t) = \rho \, B_1(t)+\sqrt{1-\rho} \, B_2(t),
	\end{align*}
    where $\rho \in (-1, 1)$ captures the correlation between the two Brownian motions $W_1$ and $W_2$. By the above definition, the filtration $\mathbb{F}$ is the same as the natural filtration generated by $W_1$ and $W_2$. 
		
	We study an insurer with two collaborating business lines and apply the so-called \emph{diffusion model} for the risk process of each line (see, for instance, \cite{jeanblanc1995optimization} and \cite{asmussen1997} for this model). 
	To be precise, the risk exposure of Line $i$, $i = 1, 2$, is given by 
	\begin{align}
		\label{eq:dR}
		\dd R_i(t) = \tilde{\mu}_i \, \dd t + \sigma_i \, \dd W_i(t),
	\end{align} 
    where $\tilde{\mu}_i, \sigma_i > 0$.
    Although the diffusion model in \eqref{eq:dR} may lead to a negative value of risk, it is a good approximation to the standard Cram\'er-Lundberg model (compound Poisson) when the intensity of the Poisson process (expected number of claims) is sufficiently large, which is the case for the insurance lines of most insurers \citep[see][p.98]{taksar2003optimal}. Additionally, one may assume $R_i(0) \gg 0$ and $\tilde{\mu}_i \gg \sigma_i$ so that $\Pb(R_i(t) < 0)$ is approximately zero for all $t$ \citep[see][]{liang2024two}. The insurer charges the premium for taking the risk $R_i$ by the expected-value principle, with a loading factor $\kappa_i \ge 0$; as such, the premium rate for Line $i$ is given by 
    \begin{align}
    	\label{eq:c}
    	P_i = (1 + \kappa_i) \tilde{\mu}_i, \quad i = 1, 2.
    \end{align} 
    
    We assume that the insurer hires one manager for its two business lines, and the manager makes three types of decisions regarding the operation of each line: reinsurance, capital injection between the two lines, and dividend payout. We describe each decision in detail as follows.
    \begin{enumerate}
    	\item Reinsurance decision.  The manager purchases \emph{proportional} reinsurance policies to mitigate the risk exposure of each business line \citep[see][]
        {schmidli2001optimal, taksar2003optimal}. Let $\theta_i(t) \in [0,1]$ denote the \emph{ceded proportion} of Line $i$'s risk exposure to the reinsurer and write $\theta_i = \{\theta_i(t) \}_{t \ge 0}$, $i = 1, 2$. We assume that the reinsurer also applies the same expected-value principle to determine the premium rate $\pi_i$ by 
    	\begin{align*}
    		\pi_i(t) &= (1 + \kappa_i) \, \theta_i(t) \, \tilde{\mu}_i, \quad t \ge 0.
    	\end{align*}
        Indeed, the loading for reinsurance is the same as for insurance in \eqref{eq:c}, and this assumption is referred to as ``cheap reinsurance'' \citep[see, for instance, Section 3 of ][]{luo2016optimal}.
    
    	\item Capital injection decision. Since the insurer under consideration operates two separate business lines (say automobile insurance and home insurance), we assume that the manager can inject capital from one business line to the other, \emph{without} incurring additional cost. The capital injection allows the manager to use the available resource within the company to save a business line that may otherwise go bankrupt \citep{gu2018}. Let $L_i = \{L_i(t)\}_{t \ge 0}$ denote the \emph{cumulative} amount of capital transferred into Line $i$ from Line $3-i$, and we treat $L_1$ and $L_2$ as singular-type controls.
    	
    	\item Dividend payout decision. The manager chooses a dividend strategy to distribute profits to the shareholders for each line. Let $C_i(t) \in [0, \oc_i]$ denote the dividend \emph{rate} paid at time $t$ to the shareholders of Line $i$, where $\oc_i > 0$ is the maximum rate, for $i=1,2$; we write $C_i = \{C_i(t)\}_{t \ge 0}$ as the dividend strategy for Line $i$.
    	This type of dividend strategy is called restricted (or bounded) dividend payment (see Section 2 in \cite{asmussen1997} or Case A in \cite{jeanblanc1995optimization}).  
    \end{enumerate}

\begin{remark}
	It is of interest to mention alternative choices for dividend payouts and reinsurance decisions. For instance, \citet{jeanblanc1995optimization} consider two additional types of dividend payout strategies in the univariate case: one defined by a sequence of pairs of random variables representing the timing and amount of payouts, and another involving unbounded dividend rates. Regular deterministic dividend payments, as discussed in \citet{keppo2021}, offer another interesting approach. The concept of ratcheting dividends, which imposes that dividend payments should not decrease at any time, is also worth considering, as highlighted in \citet{albrecher2022} and \citet{wang2024}.
	
	In terms of reinsurance, \citet{asmussen2000} study a problem similar to that of \citet{hojgaard1999}, but focus on excess-of loss reinsurance policies instead of proportional reinsurance. The implications of dividend payouts alongside more general reinsurance policies, such as in \citet{guan2022}, add another layer of complexity to the analysis.   
\end{remark}

For convenience, denote $u:=(\theta_1, \theta_2, L_1, L_2, C_1, C_2)$ the manager's 6-tuple reinsurance-capital injection-dividend strategy. 
For a given strategy $u$, the surplus process of Line $i$, denoted by $X_i:=X_i^u$, follows the dynamics
	\begin{align}
			\dd X_i(t) = \big[(1-\theta_i(t)) \mu_i - C_i(t)\big] \dd t - \big( 1 - \theta_i(t) \big)\sigma_i \dd W_i(t) + \dd L_i(t) - \dd L_{3-i}(t), \label{eq:SurplusProcess}		
	\end{align}
	where 
    \begin{align}
        \label{eqn:mu}
        \mu_i: = \kappa_i \tilde{\mu}_i, \quad i = 1, 2,
    \end{align}
    is the adjusted mean of the risk exposure, 
    and 
	$X_i(0) \ge 0$ is the initial surplus level of Line $i$.  
	We define the (individual) ruin time of Line $i$, $\tau_i := \tau_i^u$, as the first time that its surplus falls below zero; that is, $\tau_i$ is defined by 
	\begin{align}
		\tau_i := \inf\{t > 0 : X_i(t) < 0 \}, \quad i = 1, 2,
	\end{align}
    where $X_i$ is given by \eqref{eq:SurplusProcess}. 
    We define the first ruin time, $\tau := \tau^u$, of the insurer by 
	\begin{align}
    \label{eqn:ruin}
		\tau := \tau_1 \wedge \tau_2 = \min \{ \tau_1, \tau_2 \}.
	\end{align} 

\begin{remark}
    The first ruin time defined in \eqref{eqn:ruin}  is also introduced in \cite{czarna2011,liucheung2014,azcue2019,azcuemuler2021}. The definition of $\tau$ highlights the purpose of capital injection between lines, since we want to save the ``at-risk" line right away if the other line still has the capacity to transfer reserves without endangering itself. 
    We note that in the study of multivariate optimal dividend payout problems, there exist alternative choices of ruin, such as (i) the simultaneous ruin time, defined as $\tau_{sim}=\inf\{t > 0 : X_1(t),X_2(t) < 0 \}$ \citep{gu2018,grandits2019saj,strietzel2022} and (ii) the sum ruin time, given by $\tau_{sum}=\inf\{t > 0 : X_1(t)+X_2(t) < 0 \}$ \citep[see][]{albrecher2017}. 
    
    It is important to emphasize that the last ruin time, given by $\tau_{last}=\tau_1\vee\tau_2=\max\{\tau_1,\tau_2\}$, is not necessarily the same as $\tau_{sim}$; in fact, $\tau_{sim}\geq\tau_{last}$. To see this, suppose that Line 1 is the first line to go to ruin. Line 1 may continue to operate and its reserve level may become nonnegative again at some later time $t_0>\tau_1$ (that is, $X_1(t)>0$ for $t\geq t_0$). Line 2 may go to ruin at a later time $\tau_2$, while $X_{1}(\tau_{2})>0$. Using the definition of the last ruin time, we have $\tau_{last} = \tau_{2}$, but using the simultaneous ruin definition, it does not occur at $\tau_{2}$. 
\end{remark}

We formally define admissible strategies below.

\begin{definition}
	\label{def:ad}
	A strategy $u$ is said to be \emph{admissible} if $u$ is adapted to the filtration $\Fb$ and satisfies the following conditions:
		\begin{itemize}
			\item[(i)] $\theta_i(t) \in [0,1]$ and $C_i(t) \in [0, \oc_i]$		for $i=1,2$ and $t\geq 0$;
			
		\item[(ii)]  	$L_i$ is nonnegative, nondecreasing, and right continuous with left limits, for $i=1,2$.

	\end{itemize}
	Denote by $\Uc$ the set of all admissible control strategies.
\end{definition}

	The goal of the manager is to seek an optimal strategy that maximizes the weighted average of the dividend payouts from both lines up to the first ruin time $\tau$ defined in \eqref{eqn:ruin}. As such, we face the following maximization problem:
	\begin{equation}
		\label{eq:V}
		V(x_1,x_2):=\sup_{u \in \Uc}J(x_1,x_2;u) = \sup_{u \in \Uc}\mathbb{E}\left[a\int_0^{\tau}e^{-\beta t}C_1(t) \, \dd t+(1-a)\int_0^{\tau}e^{-\beta t}C_2(t) \, \dd t\right],
	\end{equation} 
    where $a \in [0,1]$ is a weighting factor that reflects the relative importance of Line 1 in the business operation, $\beta > 0$ is a discounting factor, and the expectation $\Eb$ is taken under $X_1(0) = x_1$ and $X_2(0) = x_2$.
    We call $J$ in \eqref{eq:V} the \emph{objective} function and $V$ in \eqref{eq:V} the \emph{value} function. 
    
    \begin{remark}
    	\cite{gu2018} also consider an insurer with two collaborating business lines, allowing frictionless capital injections from one line to the other, and their objective (see p.3 therein) is similar to ours in \eqref{eq:V}. However, they do \emph{not} allow the insurer to seek reinsurance coverage for its risk exposure, and they optimize up to the simultaneous ruin time. In addition, we adopt the classical control framework for dividend payment, while they follow the singular control framework and allow unbounded dividend rates (lump-sum payments).
    \end{remark}

    By definition, the value function $V$ is increasing in both arguments $x_1$ and $x_2$, and it satisfies the boundary condition $V(0,0) = 0$, since the first ruin occurs immediately when both lines have a zero initial surplus level. Moreover, since the discounted value of paying the maximum dividend rates $\overline c_1$ and $\overline c_2$ up to infinity is $\frac{a\overline c_1+(1-a)\overline c_2}{\beta}$, then the value function $V$ satisfies $\lim_{x_1,x_2\to\infty}V(x_1,x_2)=\frac{a\overline c_1+(1-a)\overline c_2}{\beta}$. In the rest of this section, we solve the problem in \eqref{eq:V} by the dynamic programming approach and provide a characterization of the value function $V$ via a system of HJB (HJB-variational, to be precise) equations in Proposition \ref{prop:hjb}. For that purpose, denote $\vartheta:=(\theta_1,\theta_2,c_1,c_2)$ and define the generator $\Lc^{\vartheta}(\phi)$ for some $\mathrm{C}^{2,2}$ function $\phi$ by 
	\begin{equation}\label{eq:generator}
		\begin{aligned}
			\Lc^{\vartheta}(\phi)&=\sum_{i=1}^2\left[\left[(1-\theta_i)\mu_i-c_i\right]\frac{\partial V}{\partial x_i}+\frac{1}{2}\sigma_i^2(1-\theta_i)^2\frac{\partial^2 V}{\partial x_i^2}\right]+\rho\sigma_1\sigma_2(1-\theta_1)(1-\theta_2)\frac{\partial^2 V}{\partial x_1\partial x_2}\\
			&\quad-\beta V+ac_1+(1-a)c_2.
		\end{aligned}
	\end{equation}
	
		\begin{proposition}
			\label{prop:hjb}
		The associated HJB equation for the problem in \eqref{eq:V} is given by
		\begin{equation}\label{eqn:hjborig}
			\sup\left\{\sup_{\theta_i\in[0,1], \, c_i \in[0,\oc_i]}\Lc^{\vartheta}(V), \quad \frac{\partial V}{\partial x_1}-\frac{\partial V}{\partial x_2}, \quad \frac{\partial V}{\partial x_2}-\frac{\partial V}{\partial x_1}\right\} = 0,
		\end{equation}
		with boundary condition $V(0,0)=0$.
	\end{proposition}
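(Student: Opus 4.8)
The plan is to derive \eqref{eqn:hjborig} as the formal Hamilton--Jacobi--Bellman variational inequality associated with the mixed regular--singular control problem \eqref{eq:V}, working under the standing regularity assumption that $V$ is smooth enough (say $V \in \mathrm{C}^{2,2}$) for It\^o's formula to apply; the rigorous confirmation that a candidate built from this equation coincides with the value function is deferred to the verification argument of Section \ref{section:proof of main}. The starting point is the Dynamic Programming Principle: for any $\Fb$-stopping time $\eta$ with $\eta \le \tau$,
\begin{align*}
V(x_1,x_2) = \sup_{u \in \Uc} \Eb\left[\int_0^{\eta} e^{-\beta t}\big(a C_1(t) + (1-a) C_2(t)\big)\,\dd t + e^{-\beta \eta} V\big(X_1(\eta), X_2(\eta)\big)\right].
\end{align*}
I would take $\eta = \tau \wedge h$ for small $h > 0$ and extract the two structurally different pieces of \eqref{eqn:hjborig} by probing the DPP with two families of controls.

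For the generator piece, I would freeze a constant regular control $\vartheta = (\theta_1, \theta_2, c_1, c_2)$ on $[0,h]$ and switch off capital injection ($\dd L_1 = \dd L_2 = 0$). Applying It\^o's formula to $e^{-\beta t} V(X_1(t), X_2(t))$, taking expectations, and using the DPP inequality $V(x_1,x_2) \ge \Eb[\,\cdots\,]$, one divides by $h$ and sends $h \downarrow 0$ to obtain $\Lc^{\vartheta}(V) \le 0$. The drift, diffusion, and cross terms in \eqref{eq:generator} arise exactly from the coefficients of \eqref{eq:SurplusProcess}, the running reward contributes $a c_1 + (1-a)c_2$, and discounting contributes $-\beta V$. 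Since $\vartheta$ ranges freely over $[0,1]^2 \times [0,\oc_1] \times [0,\oc_2]$, this yields the first constraint $\sup_{\theta_i \in [0,1],\, c_i \in [0,\oc_i]} \Lc^{\vartheta}(V) \le 0$.

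The two gradient constraints encode the singular capital-injection controls. By \eqref{eq:SurplusProcess}, an increment $\dd L_1$ raises $X_1$ and lowers $X_2$ by the same amount, and symmetrically for $\dd L_2$. Consider the admissible policy that instantaneously transfers a lump $\Delta > 0$ from Line $2$ into Line $1$ at time $0$, moving the state to $(x_1+\Delta, x_2-\Delta)$, and then continues optimally. Because this is feasible and injection is costless, the DPP gives $V(x_1,x_2) \ge V(x_1+\Delta, x_2-\Delta)$; dividing by $\Delta$ and letting $\Delta \downarrow 0$ produces $\frac{\partial V}{\partial x_1} - \frac{\partial V}{\partial x_2} \le 0$. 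Repeating with a transfer in the opposite direction (from Line $1$ into Line $2$) gives $\frac{\partial V}{\partial x_2} - \frac{\partial V}{\partial x_1} \le 0$. Finally, that the maximum of the three expressions equals zero, rather than merely being $\le 0$, follows from the optimality embedded in the DPP: at the optimum the manager either acts through the regular controls, forcing $\sup_{\vartheta}\Lc^{\vartheta}(V)=0$, or transfers capital, forcing one gradient term to vanish; were all three strictly negative, no admissible action could attain the supremum over the vanishing horizon, a contradiction. The boundary condition $V(0,0)=0$ is inherited directly from the definition of $V$.

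The main obstacle is the regularity issue underlying the It\^o step: a priori $V$ need not be $\mathrm{C}^{2,2}$, so the derivation above is formal. I would either (i) present \eqref{eqn:hjborig} as the formal HJB equation and let the explicit smooth solution constructed later certify it through a verification theorem, or (ii) recast the three inequalities in the viscosity-solution sense, where the test-function machinery removes the need for classical derivatives. A secondary subtlety is bookkeeping the sign conventions for $L_1$ versus $L_2$, so that the directions of the two gradient inequalities are not interchanged; this is pinned down entirely by the $+\dd L_i - \dd L_{3-i}$ structure in \eqref{eq:SurplusProcess}.
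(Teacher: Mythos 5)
Your proposal is correct and follows essentially the same route as the paper: a dynamic-programming inequality (which the paper derives explicitly via $\epsilon$-optimal continuation strategies rather than citing the DPP), It\^o's formula applied to $V$ under the standing $\mathrm{C}^{2,2}$ assumption, and a vanishing-horizon/vanishing-transfer limit yielding the generator inequality, the two gradient inequalities, and equality at the optimum. The only cosmetic difference is that you probe the DPP with two separate control families (constant regular controls without injection, then pure lump transfers), whereas the paper expands the fully controlled process in a single It\^o step --- keeping the $\dd L^c_i$ integrals and the jump terms --- and extracts the gradient constraints at the end by noting that an unbounded transfer would make the supremum infeasible unless $\frac{\partial V}{\partial x_1}=\frac{\partial V}{\partial x_2}$.
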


    \begin{proof}
	See Appendix \ref{app:hjb}.
\end{proof}

By a standard verification lemma (see, for instance, \cite[Chapter 2, Theorem 2.51]{schmidli2001optimal}), if we can find a \emph{classical} solution to the HJB equation in \eqref{eqn:hjborig} satisfying the boundary condition, then this solution is the value function $V$ to the main problem in \eqref{eq:V}, and solving the optimization problems in \eqref{eqn:hjborig} helps identify the optimal strategies. However, such a task is highly technical and involved, and we complete it in the next section.
	
\section{Analytical Solutions}
\label{sec:sol}

In this section, we study the insurer's problem in \eqref{eq:V} and obtain the optimal strategy $u^*$ and the value function $V$ by solving the HJB equation in \eqref{eqn:hjborig}. Recall from \eqref{eq:V} that $a \in [0,1]$ is the relative weight of Line 1 in the optimization, and due to symmetry between two lines, we assume, without loss of generality, that $a\leq \frac{1}{2}$ in the rest of the paper. 

We begin with a heuristic analysis of the HJB equation in \eqref{eqn:hjborig}, assuming that a classical solution $V$ exists for the moment. Since $\frac{\partial V}{\partial x_1}-\frac{\partial V}{\partial x_2} \le 0$ and  $\frac{\partial V}{\partial x_2}-\frac{\partial V}{\partial x_1} \le 0$ hold simultaneously by \eqref{eqn:hjborig}, it follows that $\frac{\partial V}{\partial x_1} = \frac{\partial V}{\partial x_2}$. As such, there exists a univariate function, $g: x \in \Rb_+ \mapsto  \Rb$, such that 
\begin{align}
    g(x) = V(x_1, x_2), \quad \text{with } x := x_1 + x_2 \ge 0.
\end{align}
In the subsequent analysis, for a bivariate function $\phi(x_1, x_2)$, we often write 
\begin{align}
    \phi(x) := \phi(x_1, x_2), \quad \text{with } x = x_1 + x_2.
\end{align}
Using the above equality on $g$, we have 
\begin{align*}
    g'(x) = \frac{\partial V}{\partial x_i}(x_1, x_2) \quad \text{and} \quad 
    g''(x) = \frac{\partial^2 V}{\partial x_i \partial x_j}(x_1, x_2), \quad i,j = 1, 2.
\end{align*}

First, we isolate the optimization over $c_i$ (dividend decision) in $\sup \, \Lc^{\vartheta}(V)$ and solve
\begin{align}
    \sup_{c_1 \in [0, \oc_1], c_2 \in [0, \oc_2]} \; \left( a - g'(x) \right) \, c_1 + \left(1 - a - g'(x) \right) c_2,
\end{align}
from which we obtain the candidate maximizer as 
\begin{align*}
 \widehat{c}_1(x)  = \begin{cases}
     0 & \text{if } g'(x) > a, \\
     \oc_1 & \text{if } g'(x) < a,
 \end{cases} 
 \quad \text{and} \quad 
 \widehat{c}_2(x)  = \begin{cases}
     0 & \text{if } g'(x) > 1 - a, \\
     \oc_2 & \text{if } g'(x) < 1 - a.
 \end{cases}
\end{align*}
Define two constants $u_1$ and $u_2$ by 
\begin{align}\label{eqn:defn of u1 & u2}
    u_1:=\inf\{u:g'(u)=1-a\} \quad \text{and} \quad 
    u_2:=\inf\{u:g'(u)=a\}.
\end{align}
We hypothesize that $g$ is a concave function ($g'' < 0$), which, along with $a \le 1/2$, implies that $u_1 \le u_2$. As such, we obtain the candidate for the optimal dividend strategies by
\begin{align}\label{eqn:optimal dividend}
    \big( \widehat{C}_1(x), \widehat{C}_2(x) \big)=
			\begin{cases}
				(0,0)&\mbox{if $x<u_1$,}\\
				(0,\overline c_2)&\mbox{if $u_1<x<u_2$,}\\
				(\overline c_1,\overline c_2)&\mbox{if $x>u_2$,}
			\end{cases},
\end{align}
where $x = x_1 + x_2$ is the aggregate surplus of the two business lines.

By a similar argument, the optimization problem regarding the reinsurance decision given an initial aggregate surplus level $x$, denoted by $\mathcal{H}(x)$, is given by 
\begin{align}\label{eqn:optim reinsurance}
    \mathcal{H}(x)=\sup_{\theta_1,\theta_2\in[0,1]} \; \sum_{i=1}^2\left[\left(1-\theta_i\right)\mu_ig'(x)+\frac{1}{2}\sigma_i^2(1-\theta_i)^2g''(x)\right]+\rho\sigma_1\sigma_2(1-\theta_1)(1-\theta_2)g''(x),
\end{align}
and we obtain the candidate for the optimal reinsurance strategies by (ignoring the constraints over $[0,1]$)
\begin{align}
\label{eqn:optimal_theta}
    \widehat{\theta}_1(x) = 1 + \frac{(\mu_1\sigma_2-\rho\mu_2\sigma_1)}{(1-\rho^2)\sigma_1^2\sigma_2} \, \frac{g'(x)}{g''(x)} 
    \quad \text{and} \quad 
    \widehat{\theta}_2(x) = 1 + \frac{(\mu_2\sigma_1 - \rho \mu_1 \sigma_2)}{(1-\rho^2)\sigma_1 \sigma_2^2} \, \frac{g'(x)}{g''(x)} .
\end{align}

Last, recall that the capital injection decision $(L_1, L_2)$ is a singular-type control; consequently, we cannot apply the first-order condition to characterize its optimal strategy as we have done for the dividend and reinsurance controls. Instead, this is achieved by analyzing the boundaries under different scenarios later.

From \eqref{eqn:optimal_theta}, we easily see that $\rho$, the correlation coefficient between the risk processes of two business lines, plays a key role in determining whether $\widehat{\theta}_1$ and  $\widehat{\theta}_2$ in \eqref{eqn:optimal_theta} can be achieved in the interior of $[0,1]$. This inspires us to discuss different cases for $\rho$ and derive the optimal strategy correspondingly. Note that for $\widehat{\theta}_i$ in \eqref{eqn:optimal_theta}, we have $\widehat{\theta}_1 \ge 1$ if and only if $\rho \ge \frac{\mu_1 \slash \mu_2}{\sigma_1\slash \sigma_2}$ and $\widehat{\theta}_2 \ge 1$ if and only if $\frac{1}{\rho} \le \frac{\mu_1 \slash \mu_2}{\sigma_1\slash \sigma_2}$, which, along with the constraints $\theta_i \in [0,1]$, implies $\widehat{\theta}_i = 1$, corresponding to \emph{full reinsurance}.

\subsection{The case of $0 < \rho< \frac{\mu_1 \slash \mu_2}{\sigma_1\slash \sigma_2}<\frac{1}{\rho}$}\label{main subsection}

In this section, we assume that $\rho$ satisfies the following conditions: 
\begin{align}
    \label{eq:rho_case1}
    0 < \rho< \frac{\mu_1 \slash \mu_2}{\sigma_1\slash \sigma_2}<\frac{1}{\rho},
\end{align}
where $\mu_i$ is defined by \eqref{eqn:mu}. For later convenience, we introduce several notations that will be frequently used in the analysis as follows:
  \begin{equation}\label{eqn:N1N2N3N4}
        \begin{aligned}
            N_1&:=(\mu_1\sigma_2-\mu_2\sigma_1)^2+2(1-\rho)\mu_1\mu_2\sigma_1\sigma_2>0,\qquad &N_2&:=N_1+2\beta(1-\rho^2)\sigma_1^2\sigma_2^2>0,\\
            N_3&:=\frac{N_1}{\sigma_2(\mu_1\sigma_2-\rho\mu_2\sigma_1)}>0,\qquad
            &N_4&:=\frac{(1-\rho^2)\sigma_1^2\sigma_2}{\mu_1\sigma_2-\rho\mu_2\sigma_1}N_3>0,
        \end{aligned}
    \end{equation}
    and 
\begin{equation}\label{eqn:w1 and w2}
                w_1:=\frac{(1-\gamma_1)(1-\rho^2)\sigma_1^2\sigma_2}{\mu_1\sigma_2-\rho\mu_2\sigma_1}>0\quad\mbox{and}\quad
                w_2:=\frac{(1-\gamma_1)(1-\rho^2)\sigma_1\sigma_2^2}{\mu_2\sigma_1-\rho\mu_1\sigma_2}>0,
	\end{equation}
    where
    \begin{equation}\label{eqn:gamma1}
        \gamma_1=1-\frac{N_1}{N_2}.
    \end{equation}

For $\widehat{\theta}_1$ in \eqref{eqn:optimal_theta}, we define $w_0$ as the zero of $\widehat{\theta}_1$; that is, $w_0$ satisfies
    \begin{equation}\label{eqn:defn of w0}
		\widehat{\theta}_1(w_0)=1+\frac{1-\gamma_1}{w_1}\cdot\frac{g'(w_0)}{g''(w_0)}=0.
	\end{equation} 
We do not discuss the existence of $w_0$ here; indeed, we will solve \eqref{eqn:defn of w0} to obtain $w_0$ in closed form below. By definition, $w_0$ can be interpreted as the aggregate surplus level at which the manager chooses \emph{zero reinsurance} for Line 1. Consequently, it serves as an important threshold for Line 1 to fully retain its risk. Because of the symmetry between the two lines, we assume, without loss of generality, that the threshold for the insurer to retain all risk associated with Line 1 is no greater than that for Line 2. This assumption is equivalent to the inequality $w_1\leq w_2$.

We state the standing assumptions for this section below.
\begin{assumption}\label{assume:w1<w2}
 Suppose that the correlation coefficient $\rho$ satisfies the inequalities in \eqref{eq:rho_case1}, and for $w_1$ and $w_2$ defined in \eqref{eqn:w1 and w2}, $w_1 \le w_2$.
\end{assumption}

To facilitate the presentation of results, we introduce the following notations:
    \begin{equation}\label{eqn:gamma234}
        \begin{aligned}
           \gamma_{2\pm}&:=\frac{-N_3\pm\sqrt{N_3^2+2\beta N_4}}{N_4},\\
            \gamma_{3\pm}&:=\frac{-(N_3-\overline c_2)\pm\sqrt{(N_3-\overline c_2)^2+2\beta N_4}}{N_4},\\
            \gamma_{4-}&:=\frac{-(N_3-\overline c_1-\overline c_2)-\sqrt{(N_3-\overline c_1-\overline c_2)^2+2\beta N_4}}{N_4},
        \end{aligned}
    \end{equation}
where $N_i$'s are defined in \eqref{eqn:N1N2N3N4}, $\beta$ is the discount rate in \eqref{eq:V}, and $\oc_i$'s are the maximum dividend rates.
In addition, we define two functions, $\psi, \zeta:(-\infty,0)\mapsto\mathbb{R}$, by 
    \begin{equation}\label{eqn:psi}
    \begin{aligned}
        \psi(z)&:=(1-a-\gamma_{3-}z)e^{\gamma_{3+} \, \zeta(z)}+\gamma_{3-}ze^{\gamma_{3-} \, \zeta(z)}-a,\\
        \zeta(z)&:=\frac{1}{\gamma_{3+}-\gamma_{3-}}\ln\left(\frac{\gamma_{3-}(\gamma_{4-}-\gamma_{3-})z}{(1-a-\gamma_{3-}z)(\gamma_{3+}-\gamma_{4-})}\right).
    \end{aligned}    
    \end{equation}

We are now ready to present the main results of this section. The zero point of $\widehat{\theta}_1$, $w_0$, in \eqref{eqn:defn of w0} plays a key role in the proofs, and its relative relation to $u_1$ and $u_2$ in \eqref{eqn:defn of u1 & u2} leads to three exclusive cases: (1) $w_0\leq u_1\leq u_2$, (2) $u_1<w_0\leq u_2$, and (3) $u_1\leq u_2<w_0$ (recall that $u_1 \le u_2$ under the assumptions of $g''<0$ and $a \le 1/2$). We obtain the optimal reinsurance and dividend strategies $(\theta_1^*, \theta_2^*, C_1^*, C_2^*)$ and the value function for each of the three cases below; see Theorems \ref{prop:w0<u1<u2}, \ref{prop:u1<w0<u2}, and \ref{prop:u1<u2<w0}. However, the optimal capital injection strategy $(L_1^*, L_2^*)$ can be obtained in a uniform way as shown in Theorem \ref{thm:op_L}. All proofs are deferred to Section \ref{section:proof of main}.
Recall that $a$ is the weight of Line 1, and $\beta$ is the discounting factor in the joint objective, $N_i$'s are defined in \eqref{eqn:N1N2N3N4}, $\gam_1$ is defined in \eqref{eqn:gamma1}, $\gam_{2\pm}$, $\gam_{3\pm}$, $\gam_{4-}$ are defined in \eqref{eqn:gamma234}, and $\oc_i$ is the maximum dividend rate of Line $i$.

	\begin{theorem}\label{prop:w0<u1<u2}
		Let Assumption \ref{assume:w1<w2} hold. Suppose (i) $\overline c_1+\overline c_2\geq \frac{\beta w_1}{\gamma_1(1-\gamma_1)}=\frac{N_3N_2}{2N_1}$ and (ii) $\psi(\alpha_{0})\leq 0$, where $\psi$ is defined by \eqref{eqn:psi} and 
        \begin{equation}
        \label{eqn:alpha_0}
            \alpha_{0}:=\frac{(1-a)\gamma_{3+}}{\gamma_{3+}-\gamma_{3-}}\left(\frac{N_3}{2\beta}-\frac{1}{\gamma_{3+}}-\frac{\overline{c}_2}{\beta}\right).
        \end{equation}
        We have the following results:
        \begin{enumerate}
            \item The zero point of $\widehat{\theta}_1$ in \eqref{eqn:defn of w0} equals $w_1$ in \eqref{eqn:w1 and w2} (that is, $w_0 = w_1$), and $u_1$ and $u_2$ defined in \eqref{eqn:defn of u1 & u2} are explicitly given by 
            \begin{equation}
                \begin{aligned}
                    u_1&=w_0+\frac{1}{\gamma_{2+}-\gamma_{2-}}\ln\left(\frac{\alpha_{2-}(\gamma_{2-}\alpha_3-1)}{\alpha_{2+}(1-\gamma_{2+}\alpha_3)}\right),\\
                    \text{and} \quad u_2&=u_1+\frac{1}{\gamma_{3+}-\gamma_{3-}}\ln\left(\frac{\alpha_{3-}\gamma_{3-}(\gamma_{4-}-\gamma_{3-})}{\alpha_{3+}\gamma_{3+}(\gamma_{3+}-\gamma_{4-})}\right),
                \end{aligned}
            \end{equation}
            where
	\begin{equation}\label{eqn:alpha2s}
		\alpha_{2+}:= \frac{w_0^{\gamma_1-1}(\gamma_1-\gamma_{2-}w_0)}{\gamma_{2+}-\gamma_{2-}}, 
        \quad 
        \alpha_{2-} := \frac{w_0^{\gamma_1-1}(\gamma_{2+}w_0-\gamma_1)}{\gamma_{2+}-\gamma_{2-}},
	\end{equation}
            \begin{equation}
            \label{eqn:alpha_3}
               \alpha_3:=\frac{1}{\gamma_{3+}}+\frac{\overline c_2}{\beta}+\left(1-\frac{\gamma_{3-}}{\gamma_{3+}}\right)\frac{\alpha_{3-}}{1-a},  \quad \alpha_{3+}:=\frac{1}{\gamma_{3+}}(1-a-\gamma_{3-}\alpha_{3-}),
            \end{equation}
            and  $\alpha_{3-}$ is the unique solution to $\psi(z)=0$ on $\left(\alpha_{LB},\alpha_{UB}\right)$ with
        \begin{equation}\label{eqn: alphaLB & UB}
                \alpha_{LB}:=\frac{1-a}{\gamma_{3-}}\cdot\mathds{1}_{\left\{\overline c_2\geq \frac{N_3N_2}{2N_1}\right\}}+\alpha_0\cdot\mathds{1}_{\left\{\overline c_2<\frac{N_3N_2}{2N_1}\right\}}
                \quad \text{and} \quad 
                \alpha_{UB}:=\frac{(1-a)(\gamma_{3+}-\gamma_{4-})}{\gamma_{3-}(\gamma_{3+}-\gamma_{3-})}.
        \end{equation}
        The relation $w_0\leq u_1\leq u_2$ holds.
        
            \item The function $g$, defined by
        \begin{equation}\label{eqn:g first}
		g(x)=
		\begin{cases}
			\frac{2\lambda(1-\gamma_1)}{w_0}\left(\frac{x}{w_0}\right)^{\gamma_1}&\mbox{if } x < w_0,\\
			-\lambda\left[\gamma_{2-}e^{\gamma_{2+} (x-w_0)}+\gamma_{2+}e^{\gamma_{2-} (x-w_0)}\right]&\mbox{if $w_0\leq x < u_1$,}\\
			\alpha_{3+}e^{\gamma_{3-}(x-u_1)}+\alpha_{3-}e^{\gamma_{3+}(x-u_1)}+\frac{(1-a)\overline c_2}{\beta}&\mbox{if $u_1\leq x < u_2$,}\\
			\frac{a}{\gamma_{4-}}e^{\gamma_{4-} (x-u_2)}+\frac{a\overline{c}_1+(1-a)\overline{c}_2}{\beta}&\mbox{if $x\geq u_2$,}
		\end{cases}
	\end{equation} 
    where 
    \begin{align}\label{eqn:lambda}
        \lambda=-\frac{1-a}{\gamma_{2+}\gamma_{2-}} \left[e^{\gamma_{2+}(u_1-w_0)}+e^{\gamma_{2-}(u_1-w_0)} \right]^{-1},
    \end{align}
    is a classical solution to the HJB equation in \eqref{eqn:hjborig} and thus equals the value function $V$ of the optimization problem in \eqref{eq:V}. In addition, $g$ is strictly concave as hypothesized. 
        
            \item The optimal reinsurance and dividend strategies $(\theta_1^*, \theta_2^*, C_1^*, C_2^*)$ are given by
            \begin{equation*}
			(\theta_1^*, \theta_2^*, C_1^*, C_2^*) (x)=
			\begin{cases}
				\left(1-\frac{x}{w_0},1-\frac{x}{w_2},0,0\right)&\mbox{if $x < w_0$,}\\
				\left(0,1-\frac{w_0}{w_2},0,0\right)&\mbox{if $w_0\leq x<u_1$,}\\
				\left(0,1-\frac{w_0}{w_2},0,\overline{c}_2\right)&\mbox{if $u_1\leq x<u_2$,}\\
				\left(0,1-\frac{w_0}{w_2},\overline{c}_1,\overline{c}_2\right)&\mbox{if $x\geq u_2$.}
			\end{cases}
		\end{equation*}
        \end{enumerate}		
	\end{theorem}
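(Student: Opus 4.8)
The plan is to treat all three parts through a single verification argument built around the ansatz $V(x_1,x_2)=g(x_1+x_2)$. Under this ansatz the two injection inequalities in \eqref{eqn:hjborig} reduce to $\tfrac{\partial V}{\partial x_1}-\tfrac{\partial V}{\partial x_2}=0$, so the HJB equation collapses to the scalar requirement $\sup_{\vartheta}\Lc^{\vartheta}(g)=0$ together with $g(0)=0$. I would first substitute the candidate maximizers \eqref{eqn:optimal dividend} and \eqref{eqn:optimal_theta} into $\Lc^{\vartheta}$ to obtain the governing ODE on each of the four intervals cut out by $w_0$, $u_1$, $u_2$. On $(0,w_0)$, where both reinsurance proportions are interior and no dividends are paid, feeding $\widehat\theta_1,\widehat\theta_2$ back yields the homogeneous equation $-\tfrac{N_1}{2(1-\rho^2)\sigma_1^2\sigma_2^2}\tfrac{(g')^2}{g''}-\beta g=0$, whose concave solution is the power law $x^{\gamma_1}$ with $\gamma_1=1-N_1/N_2$ as in \eqref{eqn:gamma1}; evaluating $\widehat\theta_1=0$ on this solution immediately gives $w_0=w_1$. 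On the remaining three intervals the candidate controls are piecewise constant (reinsurance frozen at $\theta_1=0$, $\theta_2=1-w_0/w_2$, and dividends at the threshold levels), so $\Lc^{\vartheta}(g)=0$ becomes a linear constant-coefficient ODE whose characteristic roots are exactly $\gamma_{2\pm}$, $\gamma_{3\pm}$, $\gamma_{4-}$; this is why $g$ takes the exponential form in \eqref{eqn:g first}.

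For Part 1 I would determine the free constants and the switching points by smooth pasting. Requiring $g\in \mathrm{C}^2$ at $w_0,u_1,u_2$, together with the defining relations $g'(u_1)=1-a$ and $g'(u_2)=a$ from \eqref{eqn:defn of u1 & u2}, produces the coupled system fixing $\lambda$, $\alpha_{2\pm}$, $\alpha_{3\pm}$ and the locations $u_1,u_2$. The cleanest route is to integrate outward from $w_0$: continuity of $g,g',g''$ at $w_0$ links the power-law piece to the first exponential piece and yields \eqref{eqn:alpha2s} and \eqref{eqn:lambda}; the condition $g'(u_1)=1-a$ then gives the stated expression for $u_1$, and matching at $u_1$ with $g'(u_2)=a$ gives $u_2$. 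The constant $\alpha_{3-}$ is characterized as a root of the transcendental function $\psi$ in \eqref{eqn:psi}; here I would show $\psi$ is strictly monotone and changes sign on $(\alpha_{LB},\alpha_{UB})$, using hypothesis (ii), $\psi(\alpha_0)\le 0$, to locate the root and hypothesis (i), $\oc_1+\oc_2\ge N_3N_2/(2N_1)$, to force the ordering $w_0\le u_1\le u_2$.

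For Part 2 the substance is the verification. I would first confirm $g\in \mathrm{C}^2(\Rb_+)$ from the pasting equations, then establish strict concavity piece by piece: $g''<0$ is immediate on the power-law and terminal-exponential pieces, while on the two interior exponential pieces I would show the relevant combination of exponentials keeps $g''<0$ throughout $[w_0,u_2)$, which is where the ordering and conditions (i)--(ii) prevent $g''$ from changing sign before $u_2$. Granting concavity, the candidate controls attain the supremum in \eqref{eqn:hjborig}: $\Lc^{\vartheta}(g)$ is jointly concave in $(\theta_1,\theta_2)$ (its Hessian is $g''$ times a positive-definite matrix) and linear in $(c_1,c_2)$, so the maximizer is the KKT point, which I would verify coincides with the feasible candidate $(\theta_1^*,\theta_2^*,C_1^*,C_2^*)$ on each interval; feasibility $\theta_i^*\in[0,1]$ follows from Assumption \ref{assume:w1<w2} (so $0<w_0\le w_2$) and the sign of $g'/g''$. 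Finally I would check $g(0)=0$ and the growth limit $g(\infty)=(a\oc_1+(1-a)\oc_2)/\beta$, so the verification lemma cited after Proposition \ref{prop:hjb} identifies $g$ with $V$. Part 3 is then immediate, since the optimal strategy is read off as the maximizer just identified.

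The hard part, I expect, is Part 1 and the concavity step of Part 2 rather than the ODE algebra. Proving that $\psi(z)=0$ has a unique admissible root and that the induced switching points satisfy $w_0\le u_1\le u_2$ needs a delicate sign and monotonicity analysis in which hypotheses (i) and (ii) are precisely what select this case over those of Theorems \ref{prop:u1<w0<u2} and \ref{prop:u1<u2<w0}. Maintaining $g''<0$ across the two interior exponential pieces is the other delicate point, because a combination of exponentials with opposite-signed exponents can lose concavity; the argument must confirm that $u_2$ is reached before that occurs, and that the KKT maximizer indeed lands on the stated frozen reinsurance level rather than drifting off it.
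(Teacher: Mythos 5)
Your proposal follows essentially the same route as the paper: reduce to the scalar function $g$ of the aggregate surplus, derive the power-law ODE on $(0,w_0)$ and the constant-coefficient linear ODEs on the other three intervals (with the reinsurance level frozen via a KKT argument, which is the paper's Lemma \ref{lemma:plateau}), determine the constants and switching points by smooth pasting, characterize $\alpha_{3-}$ as the unique root of the monotone function $\psi$ on $(\alpha_{LB},\alpha_{UB})$ using hypotheses (i)--(ii), and establish concavity piecewise before invoking the verification lemma. You also correctly identify the genuinely delicate steps (the sign/ordering analysis for $w_0\le u_1\le u_2$ and preserving $g''<0$ across the interior exponential pieces, which the paper handles via $g'''>0$ and continuity of $g''$), so the plan matches the paper's proof in both structure and substance.
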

    
    \begin{remark}\label{remark: theorem 1}
        We first highlight the significance of the function $\psi$ in \eqref{eqn:psi}. Together with the definition of $u_2$ in \eqref{eqn:defn of u1 & u2}, the (unique) root of $\psi$ ensures that $g$ is differentiable at $x=u_2$. This root, which is given by $\alpha_{3-}$, can be found in the interval $\left(\frac{1-a}{\gamma_{3-}},\alpha_{UB}\right)$, which is proved in Lemma \ref{lemma:exist alpha3}. 

        Next, we discuss the optimal reinsurance and dividend strategies $(\theta_1^*, \theta_2^*, C_1^*, C_2^*)$ obtained in Item 3 of Theorem \ref{prop:w0<u1<u2}. Recall that we set $a \le 1/2$, implying that the interest of Line 2 outweighs that of Line 1 in the manager's decision. When the aggregate surplus $x$ is low ($x < u_1$), neither line distributes dividends; when $x$ exceeds the first threshold $u_1$, Line 2 starts to pay dividends, but Line 1 waits until $x$ grows beyond the second threshold $u_2 > u_1$. The relative importance of Line 2 over Line 1 is also reflected in the fact that the manager cedes at least $1 - \frac{w_0}{w_2}$ proportion of Line 2's risk to the reinsurer, but for larger enough surplus ($x \ge w_0$), the entire risk of Line 1 is retained.

        Note that the value function $V = g$ in \eqref{eqn:g first} has three switching points, $w_0$, $u_1$, and $u_2$. In this case, to ensure that $V$ is a classical solution, we encounter three systems of equations, with each system ensuring that $V$ and its first and second derivatives are continuous at the corresponding switching point (also referred to as “smooth fit” conditions). This introduces a significant increase in complexity when determining the unknown variables. Notably, one variable (i.e. $\alpha_{3-}$) remains implicitly defined and cannot be explicitly obtained; in comparison,  all unknowns in \cite{hojgaard1999} (which solves a univariate optimal dividend problem) can be explicitly determined. To overcome this technical issue, we first identify the interval containing this unknown variable, ensuring that (i) the formulas of the switching points are well-defined and satisfy specific ordering conditions, and (ii) the value function is increasing. We then prove that this unknown variable is the (unique) root of a monotonic function.

    \end{remark}

In the second case, we assume that Condition $(ii)$ in Theorem \ref{prop:w0<u1<u2} does not hold (that is, $\psi(\alpha_0) > 0$), but Condition $(i)$ still holds. We define a new function, $\chi :\mathbb{R} \mapsto [0,\infty)$ by
    \begin{equation}\label{eqn:xsoln}
		\chi(z)=k_1e^{\frac{N_2}{N_1}z}+\frac{\overline{c}_2(N_2-N_1)}{N_2\beta}z+k_2,
	\end{equation}
    where $k_1$ and $k_2$ are two constants defined case by case. We present the results for the second case in the theorem below.    
    
    \begin{theorem}\label{prop:u1<w0<u2}
		Let Assumption \ref{assume:w1<w2} hold. Suppose (i) $\overline c_1+\overline c_2\geq \frac{\beta w_1}{\gamma_1(1-\gamma_1)}=\frac{N_3N_2}{2N_1}$ and (ii) $\psi(\alpha_{0})> 0$, where $\alpha_{0}$ is defined in \eqref{eqn:alpha_0}. We have the following results:
        \begin{enumerate}
            \item $u_1$ and $u_2$ defined in \eqref{eqn:defn of u1 & u2} are explicitly given by 
            \begin{align}
                u_1&=k_1(1-a)^{-\frac{N_2}{N_1}}-\frac{\overline c_2(N_2-N_1)}{N_2\beta}\ln (1-a)+k_2\\
                 \text{and} \quad    u_2&=u_1+\frac{1}{\gamma_{3+}-\gamma_{3-}}\ln\left(\frac{\alpha_{3-}\gamma_{3-}(\gamma_{4-}-\gamma_{3-})}{\alpha_{3+}\gamma_{3+}(\gamma_{3+}-\gamma_{4-})}\right),
            \end{align}
            where 
            \begin{equation}
            \label{eqn:k}
            \begin{aligned}
                k_1&=\frac{N_1(N_2-N_1)}{N_2\beta}\left(\frac{N_3}{2N_1}-\frac{\overline c_2}{N_2}\right)\left[\gamma_{3+}\alpha_{3+}e^{\gamma_{3+}(w_0-u_1)}+\gamma_{3-}\alpha_{3-}e^{\gamma_{3-}(w_0-u_1)}\right]^{\frac{N_2}{N_1}},\\
                k_2&=\frac{\overline c_2(N_2-N_1)}{N_2\beta}\left(\frac{N_1}{N_2}+\ln(1-a)\right),
            \end{aligned}
        \end{equation}
        $\alpha_{3+}$ is defined in \eqref{eqn:alpha_3}, and $\alpha_{3-}$ is the unique solution to $\psi(z)=0$ on $\left(\frac{1-a}{\gamma_{3-}},\alpha_{0}\right)$.
        
         The zero of $\widehat{\theta}_1$ in \eqref{eqn:defn of w0} is given by 
            \begin{align}
                w_0 = u_1+\frac{1}{\gamma_{3+}-\gamma_{3-}}\ln\left[\frac{\gamma_{3-}\alpha_{3-}\left(\frac{w_1}{\gamma_1-1}\gamma_{3-}-1\right)}{\gamma_{3+}\alpha_{3+}\left(1-\frac{w_1}{\gamma_1-1}\gamma_{3+}\right)}\right],
            \end{align}
            with $u_1$ obtained above. 
            
            In addition, the relation $u_1<w_0\leq u_2$ holds.
            
            \item The  function $g$, defined by
        \begin{equation}\label{eqn:candidate 3}
		g(x)=
		\begin{cases}
			\frac{(1-a)u_1}{\gamma_1}\left(\frac{x}{u_1}\right)^{\gamma_1}&\mbox{if $x<u_1$,}\\
			\int_{u_1}^x e^{- \chi^{-1}(y)} \dd y+\frac{(1-a)u_1}{\gamma_1}&\mbox{if $u_1\leq x<w_0$,}\\
			\alpha_{3+}e^{\gamma_{3-}(x-u_1)}+\alpha_{3-}e^{\gamma_{3+}(x-u_1)}+\frac{(1-a)\overline c_2}{\beta}&\mbox{if $w_0\leq x<u_2$,}\\
			\frac{a}{\gamma_{4-}}e^{\gamma_{4-} (x-u_2)}+\frac{a\overline{c}_1+(1-a)\overline{c}_2}{\beta}&\mbox{if $x\geq u_2$,}
		\end{cases}
	\end{equation} 
    where $\chi^{-1}$ denotes the inverse function of $\chi$ in \eqref{eqn:xsoln}, 
    is a classical solution to the HJB equation in \eqref{eqn:hjborig} and thus equals the value function $V$ of the optimization problem in \eqref{eq:V}. In addition, $g$ is strictly concave.
   
            \item The reinsurance and dividend strategies $(\theta_1^*, \theta_2^*, C_1^*, C_2^*)$ are given by
            \begin{equation*}
			(\theta_1^*, \theta_2^*, C_1^*, C_2^*) (x) = 
			\begin{cases}
				\left(1-\frac{x}{w_1},1-\frac{x}{w_2},0,0\right)&\mbox{if $x<u_1$,}\\
				\left(1-\frac{1-\gamma_1}{w_1} \chi'(\chi^{-1}(x)),1-\frac{1-\gamma_1}{w_2} \chi'(\chi^{-1}(x)),0,0\right)&\mbox{if $u_1\leq x<w_0$,}\\
				\left(0,1-\frac{1-\gamma_1}{w_2} \chi'(\chi^{-1}(w_0)),0,\overline{c}_2\right)&\mbox{if $w_0\leq x<u_2$,}\\
				\left(0,1-\frac{1-\gamma_1}{w_2} \chi'(\chi^{-1}(w_0)),\overline{c}_1,\overline{c}_2\right)&\mbox{if $x\geq u_2$.}
			\end{cases}
		\end{equation*}
        \end{enumerate}		
	\end{theorem}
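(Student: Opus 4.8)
The plan is to proceed by a guess-and-verify argument: build the candidate function $g$ in \eqref{eqn:candidate 3} piece by piece, fix its free constants through smooth-fit (that is, $C^2$) conditions, prove that the resulting $g$ is a strictly concave classical solution of the HJB equation \eqref{eqn:hjborig}, and then apply the verification lemma to conclude $g\equiv V$. Since the ansatz $V(x_1,x_2)=g(x_1+x_2)$ already forces $\partial V/\partial x_1=\partial V/\partial x_2$, the two gradient terms in \eqref{eqn:hjborig} vanish identically and the substantive requirement reduces to $\sup_{\vartheta}\Lc^{\vartheta}(g)\le 0$ everywhere, with equality in each region once the corresponding candidate control is inserted. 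Inserting the interior reinsurance maximizer \eqref{eqn:optimal_theta} collapses the reinsurance part of the generator \eqref{eqn:optim reinsurance} to $-\tfrac{N_1}{2(1-\rho^2)\sigma_1^2\sigma_2^2}(g')^2/g''$, so the four regimes yield: a homogeneous equation on $[0,u_1)$ whose increasing concave solution is the power law of exponent $\gamma_1$; a nonlinear equation on $[u_1,w_0)$ once $C_2^*=\overline{c}_2$ is switched on; and two constant-coefficient linear equations on $[w_0,u_2)$ and $[u_2,\infty)$ with the exponential solutions governed by $\gamma_{3\pm}$ and $\gamma_{4-}$.

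The ingredient that distinguishes this case from Theorem \ref{prop:w0<u1<u2} is the middle region $[u_1,w_0)$, in which Line 2 pays dividends at the maximal rate while Line 1's risk is still partially ceded; here the ODE is nonlinear. I would linearize it through the change of variable $z=-\ln g'(x)$, under which $g$, viewed as a function of $z$, obeys a first-order linear ODE and $x$ becomes the function $\chi$ of \eqref{eqn:xsoln}; this simultaneously explains the $e^{(N_2/N_1)z}$ and linear terms in \eqref{eqn:xsoln} and delivers the representation $g'(x)=e^{-\chi^{-1}(x)}$, i.e.\ the integral form of $g$ on $[u_1,w_0)$ in \eqref{eqn:candidate 3}. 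The checks to carry out here are that $\chi$ is strictly monotone on the relevant range, so that $\chi^{-1}$ is well-defined and the representation yields a genuine $C^2$ function, and that this $g$ satisfies $g'>0$, $g''<0$ and matches the adjacent pieces in value, slope, and curvature at $u_1$ and $w_0$.

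The free constants are then pinned down by $C^2$ smooth fit at the three switching points. The slopes at the endpoints are fixed by the defining relations $g'(u_1)=1-a$ and $g'(u_2)=a$ in \eqref{eqn:defn of u1 & u2}, while $w_0$ is the zero of $\widehat{\theta}_1$ in \eqref{eqn:defn of w0}; continuity of $g,g',g''$ then forces the stated increments $u_2-u_1$ and $w_0-u_1$, the constants $(k_1,k_2)$ in \eqref{eqn:k}, and in turn $u_1$ itself, leaving a single implicitly-defined unknown $\alpha_{3-}$. The crux of the argument, and what I expect to be the main obstacle, is to show that under the present hypotheses, namely (i) $\overline{c}_1+\overline{c}_2\ge N_3N_2/(2N_1)$ and, crucially, (ii) $\psi(\alpha_0)>0$, the function $\psi$ of \eqref{eqn:psi} has a unique root in $\bigl(\tfrac{1-a}{\gamma_{3-}},\alpha_0\bigr)$. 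Using the monotonicity of $\psi$ established in Lemma \ref{lemma:exist alpha3}, the sign condition (ii) is precisely what relocates this root from above $\alpha_0$ (the Theorem \ref{prop:w0<u1<u2} regime) to below it, which is exactly what flips the ordering from $w_0\le u_1$ to $u_1<w_0$. I would then verify that the constants so determined simultaneously guarantee the ordering $u_1<w_0\le u_2$, global strict concavity $g''<0$, and admissibility of the induced controls, $\theta_i^*\in[0,1]$ and $C_i^*\in[0,\overline{c}_i]$; these are interlocking sign conditions on $\alpha_{3\pm}$, $\gamma_{3\pm}$, and $\gamma_{4-}$, and keeping them mutually consistent is the delicate part.

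Once $g$ is known to be $C^2$, strictly increasing, strictly concave, to satisfy $\sup_{\vartheta}\Lc^{\vartheta}(g)=0$, and to meet the boundary condition $g(0)=0$, the candidate controls in Item 3 are confirmed to be the pointwise maximizers in \eqref{eqn:hjborig} (the clipped reinsurance rule from \eqref{eqn:optimal_theta} and the threshold dividend rule), so the verification lemma applies and gives $g\equiv V$. The optimal reinsurance and dividend strategies are then read off region by region as stated, while the optimal capital injection $(L_1^*,L_2^*)$ is supplied separately in Theorem \ref{thm:op_L}.
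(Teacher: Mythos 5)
Your proposal follows essentially the same route as the paper's proof: the same region-by-region construction of $g$, the same H\o jgaard--Taksar linearization $z=-\ln g'(x)$ producing $\chi$ and the integral representation on $[u_1,w_0)$, the same smooth-fit determination of $k_1,k_2,u_1,w_0,u_2$, and the same identification of $\alpha_{3-}$ as the unique root of $\psi$ on $\bigl(\tfrac{1-a}{\gamma_{3-}},\alpha_0\bigr)$ via the monotonicity from Lemma \ref{lemma:exist alpha3}, with $\psi(\alpha_0)>0$ relocating the root and flipping the ordering to $u_1<w_0$. You have correctly isolated the key structural points, including that on $[u_1,w_0)$ Line 2 already pays $\overline c_2$ (which is what makes the ODE there the nonlinear one with the dividend terms), consistent with the paper's derivation of \eqref{eqn:ode2}.
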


    \begin{remark}\label{remark: theorem 2}
        By comparing Theorem \ref{prop:u1<w0<u2} with Theorem \ref{prop:w0<u1<u2}, we observe that the manager's optimal strategies $(\theta_1^*, \theta_2^*, C_1^*, C_2^*)$ are similar in these two cases, and thus the explanations in Remark \ref{remark: theorem 1} apply to Theorem \ref{prop:u1<w0<u2} as well. However, there are differences on the thresholds $u_1 < w_0 < u_2$ and the exact form of the optimal ceded proportion $\theta_i^*$. 

        Under the conditions of Theorem \ref{prop:u1<w0<u2}, the root of $\psi$ in  \eqref{eqn:psi} is found over the interval $((1-a)/\gamma_{3-},\alpha_0)$. It is important to note that, in this case, the value function involves an integral that cannot be expressed in closed form, which requires numerical approximations to solve the integral. A computationally efficient approach to solving this integral is discussed in Section \ref{sec numerical}. 
    \end{remark}

Last, we consider the case when Condition $(i)$ of Theorems \ref{prop:w0<u1<u2} and \ref{prop:u1<w0<u2} does not hold. The results under this case are summarized below. 
    
    \begin{theorem}\label{prop:u1<u2<w0}
		Let Assumption \ref{assume:w1<w2} hold. Suppose $\overline c_1+\overline c_2<\frac{\beta w_1}{\gamma_1(1-\gamma_1)}=\frac{N_3N_2}{2N_1}$. We have the following results:
        \begin{enumerate}
            \item $w_0$ defined in \eqref{eqn:defn of w0} is infinite ($w_0 = \infty$), and $u_1$ and $u_2$ defined in \eqref{eqn:defn of u1 & u2} are explicitly given by 
            \begin{align}
                u_1&=(1-\gamma_1)\left[\frac{\overline c_1(N_2-N_1)}{N_2\beta}\left(\frac{a}{1-a}\right)^{\frac{N_2}{N_1}}+\frac{\overline c_2(N_2-N_1)}{N_2\beta}\right]\\
                \text{and} \quad u_2&=u_1+\frac{\overline c_1N_1(N_2-N_1)}{N_2^2\beta}\left(1-\left(\frac{a}{1-a}\right)^{\frac{N_2}{N_1}}\right)-\frac{\overline c_2(N_2-N_1)}{N_2\beta}\ln \left(\frac{a}{1-a}\right),
            \end{align}
            respectively. Moreover, the relation $u_1\leq u_2<w_0$ holds.

            \item The function $g$, defined by
        \begin{equation}\label{eqn:g w0>u1}
		g(x)=
		\begin{cases}
			\frac{(1-a)u_1}{\gamma_1}\left(\frac{x}{u_1}\right)^{\gamma_1}&\mbox{if $x<u_1$,}\\
			\int_{u_1}^xe^{-\chi^{-1}(y)} \dd y+\frac{(1-a)u_1}{\gamma_1}&\mbox{if $u_1\leq x<u_2$,}\\
			\frac{a}{\gamma_3}e^{\gamma_3 (x-u_2)}+\frac{a\overline c_1+(1-a)\overline c_2}{\beta}&\mbox{if $x\geq u_2$,}
		\end{cases}
	\end{equation}  where 
    $\chi^{-1}$ is the inverse of the function $\chi$ in \eqref{eqn:xsoln} with
            \begin{equation}\label{eqn:k1 & k2 case 3}
		\begin{aligned}
			k_1&=\frac{u_2-u_1+\frac{\overline{c}_2(N_2-N_1)}{N_2\beta}\ln \left(\frac{a}{1-a}\right)}{a^{-\frac{N_2}{N_1}}-(1-a)^{-\frac{N_2}{N_1}}},\\
			k_2&=\frac{a^{-\frac{N_2}{N_1}}\left(u_1+\frac{\overline{c}_2(N_2-N_1)}{N_2\beta}\ln (1-a)\right)-(1-a)^{-\frac{N_2}{N_1}}\left(u_2+\frac{\overline{c}_2(N_2-N_1)}{N_2\beta}\ln a\right)}{a^{-\frac{N_2}{N_1}}-(1-a)^{-\frac{N_2}{N_1}}},
		\end{aligned}
	\end{equation}
        and
	\begin{equation}\label{eqn:gamma3}
		\gamma_3=-\frac{N_2\beta}{(\overline c_1+\overline c_2)(N_2-N_1)}<0,
	\end{equation}
    is a classical solution to the HJB equation in \eqref{eqn:hjborig} and thus equals the value function $V$ of the optimization problem in \eqref{eq:V}. In addition, $g$ is strictly concave.

            \item The reinsurance and dividend strategies $(\theta_1^*, \theta_2^*, C_1^*, C_2^*)$ are given by
            \begin{equation*}
			(\theta_1^*, \theta_2^*, C_1^*, C_2^*)(x) =
			\begin{cases}
				\left(1-\frac{x}{w_1},1-\frac{x}{w_2},0,0\right)&\mbox{if $x<u_1$,}\\
				\left(1-\frac{1-\gamma_1}{w_1}\chi'(\chi^{-1}(x)),1-\frac{1-\gamma_1}{w_2}\chi'(\chi^{-1}(x)),0,\overline{c}_2\right)&\mbox{if $u_1\leq x<u_2$,}\\
				\left(1+\frac{1-\gamma_1}{w_1\gamma_3},1+\frac{1-\gamma_1}{w_2\gamma_3},\overline{c}_1,\overline{c}_2\right)&\mbox{if $x\geq u_2$.}
			\end{cases}
		\end{equation*}
        In this case, $\theta_i^* >0$ for $i=1,2$.
        \end{enumerate}		
	\end{theorem}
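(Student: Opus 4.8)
The plan is to prove Theorem \ref{prop:u1<u2<w0} by the same verification route as in the earlier cases: I take the piecewise function $g$ in \eqref{eqn:g w0>u1}, show that $V(x_1,x_2):=g(x_1+x_2)$ is a strictly concave $\mathrm{C}^2$ classical solution of the HJB equation \eqref{eqn:hjborig} satisfying the boundary data, and then invoke the verification lemma quoted after Proposition \ref{prop:hjb} to conclude $g=V$ and to read off the optimizers. The reduction to a univariate $g$ is forced, as in the heuristic derivation, by the two gradient constraints in \eqref{eqn:hjborig}, which give $\partial_1 V=\partial_2 V=g'$. The computational backbone is the reduction of the reinsurance optimization \eqref{eqn:optim reinsurance}: plugging the interior maximizer \eqref{eqn:optimal_theta} into $\mathcal{H}$ and simplifying with the identity $\mu_1^2\sigma_2^2+\mu_2^2\sigma_1^2-2\rho\mu_1\mu_2\sigma_1\sigma_2=N_1$ collapses $\mathcal{H}$ to the reduced operator $\mathcal{H}^*(x)=-K\,(g')^2/g''$, where $K:=\tfrac{N_1}{2(1-\rho^2)\sigma_1^2\sigma_2^2}=\tfrac{\beta N_1}{N_2-N_1}$. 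Feeding $\mathcal{H}^*$ and the threshold dividend rule \eqref{eqn:optimal dividend} into \eqref{eqn:hjborig} then produces one nonlinear second-order ODE, $-K(g')^2/g''-\beta g+c_1(a-g')+c_2(1-a-g')=0$, on each of the regions $x<u_1$, $u_1\le x<u_2$, $x\ge u_2$, the three instances differing only through the active dividend rates $(c_1,c_2)$.

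Next I would solve the three ODEs and fix the free constants. On $x<u_1$ the equation is homogeneous and scale-invariant, so the power ansatz $g=Ax^{\gamma_1}$ works precisely because $\gamma_1=1-N_1/N_2$ from \eqref{eqn:gamma1} solves the indicial relation $K\,\gamma_1/(1-\gamma_1)=\beta$; on $x\ge u_2$ the exponential $g=\tfrac{a}{\gamma_3}e^{\gamma_3(x-u_2)}+\tfrac{a\oc_1+(1-a)\oc_2}{\beta}$ solves it with $\gamma_3$ as in \eqref{eqn:gamma3} and matches the boundary value $\lim_{x\to\infty}g=\tfrac{a\oc_1+(1-a)\oc_2}{\beta}$ because $\gamma_3<0$. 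The middle region is where the key idea enters: I use the Legendre-type change of variable $z=-\ln g'(x)$, i.e. $g'=e^{-z}$ and $x=\chi(z)$, so that $g'/g''=-\chi'(z)$; differentiating the nonlinear middle ODE in $x$ and rewriting it in $z$ collapses it to the linear constant-coefficient ODE $K\chi''-(K+\beta)\chi'+\oc_2=0$, whose characteristic roots are $0$ and $N_2/N_1$. This reproduces the form \eqref{eqn:xsoln} and explains why $g$ enters through $\int e^{-\chi^{-1}}$. The thresholds are then pinned by their defining values $g'(u_1)=1-a$ and $g'(u_2)=a$ from \eqref{eqn:defn of u1 & u2}, i.e. $\chi^{-1}(u_1)=-\ln(1-a)$ and $\chi^{-1}(u_2)=-\ln a$; the remaining constants $k_1,k_2$ are fixed by requiring the middle piece to genuinely solve its nonlinear ODE (equivalently, $g''$-continuity at $u_1$) and by value-matching at $u_2$, and these relations unwind to the closed forms \eqref{eqn:k1 & k2 case 3}. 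A useful simplification is that $g''$-continuity at $u_2$ need not be imposed separately: since $a-g'$ vanishes there, the middle- and outer-region ODEs coincide at $u_2$, so once each piece solves its own ODE and $(g,g')$ match, $g''$ matches for free.

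I would then establish the two structural facts, $w_0=\infty$ and strict concavity, which also deliver feasibility of the controls. Under \eqref{eq:rho_case1} one has $\mu_1\sigma_2-\rho\mu_2\sigma_1>0$ and $\mu_2\sigma_1-\rho\mu_1\sigma_2>0$, so from \eqref{eqn:optimal_theta}, $\widehat\theta_1=1+\tfrac{1-\gamma_1}{w_1}\,g'/g''$ and $\widehat\theta_2=1+\tfrac{1-\gamma_1}{w_2}\,g'/g''$ are both strictly below $1$ everywhere (as $g'>0>g''$); moreover $\widehat\theta_2\ge\widehat\theta_1$ because $w_2\ge w_1$ under Assumption \ref{assume:w1<w2}, so it suffices to bound $\widehat\theta_1$ from below. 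Since $\widehat\theta_1$ is decreasing in $x$ (the same sign analysis that gives concavity, via monotonicity of $g'/g''$), it attains its infimum as $x\to\infty$, where $g'/g''\to1/\gamma_3$ and hence $\widehat\theta_1\to1+\tfrac{1-\gamma_1}{w_1\gamma_3}$; a short computation using $N_2-N_1=2\beta(1-\rho^2)\sigma_1^2\sigma_2^2$ and the definition of $N_3$ shows this limit is strictly positive exactly when $\oc_1+\oc_2<\tfrac{N_3N_2}{2N_1}$, which is the standing hypothesis. Thus $\widehat\theta_1>0$ throughout, equation \eqref{eqn:defn of w0} has no finite root, i.e. $w_0=\infty$, and $\theta_i^*\in(0,1)$ on every region, which is the claim $\theta_i^*>0$ in Item 3. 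Strict concavity is immediate on the outer regions ($0<\gamma_1<1$ gives $g''<0$ for the power piece and $\gamma_3<0$ for the exponential piece); on the middle region $g''=-e^{-z}/\chi'$, so $g''<0$ reduces to $\chi'>0$ on $[-\ln(1-a),-\ln a]$, which I would verify from the sign of $k_1$ and the monotonicity of $\chi'$. Concavity in turn yields $u_1\le u_2$ (since $g'$ is decreasing and $a\le 1-a$) and trivially $u_2<w_0=\infty$, completing Item 1.

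Finally I would close the verification. The two gradient terms in \eqref{eqn:hjborig} vanish because $\partial_1 V=\partial_2 V=g'$, so it remains to show $\sup_{\theta_i\in[0,1],\,c_i\in[0,\oc_i]}\mathcal{L}^{\vartheta}(V)=0$ for $V=g(x_1+x_2)$. On each region the dividend part of $\mathcal{L}^{\vartheta}$ is linear in $(c_1,c_2)$ and the threshold rule \eqref{eqn:optimal dividend} is its genuine maximizer given the local signs of $a-g'$ and $1-a-g'$, while the reinsurance part is a strictly concave quadratic in $(\theta_1,\theta_2)$ (because $g''<0$) whose unconstrained maximizer $\widehat\theta\in(0,1)^2$ is feasible by the previous paragraph; hence the supremum equals $\mathcal{H}^*$ plus the optimal dividend contribution, which the region ODEs make vanish. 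The verification lemma then identifies $g$ with $V$, and evaluating $\widehat\theta$ and the dividend rule region by region gives the strategies in Item 3. I expect the genuine obstacle to be the middle-region work: proving $\chi'>0$ on $[-\ln(1-a),-\ln a]$—equivalently global strict concavity—while simultaneously controlling the signs and magnitudes of $k_1,k_2$ in \eqref{eqn:k1 & k2 case 3}, because concavity is exactly what legitimizes the reinsurance maximization and feeds back into the feasibility $\widehat\theta\in[0,1]^2$. The $w_0=\infty$ bound is the single place where the smallness hypothesis $\oc_1+\oc_2<\tfrac{N_3N_2}{2N_1}$ is essential, and it is what structurally distinguishes this case from Theorems \ref{prop:w0<u1<u2} and \ref{prop:u1<w0<u2}.
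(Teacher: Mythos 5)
Your proposal is correct and follows essentially the same route as the paper: the same region-by-region ODEs, the same Legendre-type change of variables $x=\chi(z)$ for the middle region, the same smooth-fit systems yielding $u_1$, $u_2$, $k_1$, $k_2$, and the same positivity computation $-\tfrac{1-\gamma_1}{w_1\gamma_3}=\tfrac{2N_1}{N_3N_2}(\oc_1+\oc_2)<1$ underlying both $w_0=\infty$ and $\theta_i^*>0$. The only cosmetic differences are that the paper establishes $w_0=\infty$ by a short contradiction argument using the putative $\gamma_{4-}$-solution beyond a finite $w_0$, and that at $u_2$ it deduces value-matching from $(g',g'')$-matching rather than deducing $g''$-matching from $(g,g')$-matching as you do.
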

    
    \begin{remark}
        Under the conditions of Theorem \ref{prop:u1<u2<w0}, the manager is only allowed to pay dividends up to $\frac{N_3N_2}{2N_1}$ for two lines together. This frees up the aggregate reserve, allowing the manager to purchase reinsurance coverage for both lines (with $\theta_i^* > 0$) in regardless of the surplus level $x$. When $x$ is relatively small, $\theta_i^*$ decreases with respect to $x$; however, for large enough $x$ (when $x \ge u_2$), the reinsurance decision is \emph{independent} of $x$, and there is a maximum ceded proportion for each line.   

        In the above three main theorems, we find that the value function in each scenario (see \eqref{eqn:g first}, \eqref{eqn:candidate 3}, and \eqref{eqn:g w0>u1}) features at least two, but at most three, switching points. One switching point serves as a signal for the manager to stop purchasing more reinsurance, while the other two switching points signal the manager to pay out the maximum dividend rate. This also extends the results in \cite{hojgaard1999}, where they have at least one, but at most two, switching points. Notably, they show that it is not possible for the switching point that affects reinsurance to be higher than the one that governs the dividend payout. However, such a case is possible within our framework. 
    \end{remark}

    To derive the optimal capital injection strategy,  we partition the domain of the surplus pair $(x_1, x_2) \in \Rb_+^2$ into 7 regions (see Figure \ref{fig:capitaltransfer}). Recall that in each of Theorems \ref{prop:w0<u1<u2}, \ref{prop:u1<w0<u2}, and \ref{prop:u1<u2<w0}, we obtain $w_0$, $u_1$, and $u_2$ explicitly.  With that in mind, define constants $\delta_i$, $i=1,2,3$, corresponding to each of the three cases by 
     \begin{equation}
        (\delta_0,\delta_1,\delta_2)=
        \begin{cases}
            (w_0,u_1,u_2)&\mbox{if $w_0\leq u_1\leq u_2$,}\\
            (u_1,w_0,u_2)&\mbox{if $u_1< w_0\leq u_2$,}\\
            (u_1,u_1,u_2)&\mbox{if $u_1\leq u_2<w_0$}.
        \end{cases}
    \end{equation}
    The 7 regions $A_i$, $i=1,2,\cdots, 7$, are defined as follows (see Figure \ref{fig:capitaltransfer}):
        \begin{itemize}
            \item $A_1=\{(x_1,x_2):x_1\geq 0, x_2>\delta_2\},$
            \item $A_2=\{(x_1,x_2):x_1>0, x_2\in [0,\delta_2], x_1+x_2> \delta_2\},$
            \item $A_3=\{(x_1,x_2):x_1\geq0, x_2\in (\delta_1,\delta_2], x_1+x_2\leq \delta_2\},$
            \item $A_4=\{(x_1,x_2):x_1> 0, x_2\in [0,\delta_1], x_1+x_2\in (\delta_1,\delta_2]\},$
            \item $A_5=\{(x_1,x_2):x_1\geq 0, x_2\in (\delta_0,\delta_1], x_1+x_2\leq \delta_1\},$
            \item $A_6=\{(x_1,x_2):x_1> 0, x_2\in [0,\delta_0], x_1+x_2\in (\delta_0,\delta_1]\},$
            \item $A_7=\{(x_1,x_2):x_1\geq 0, x_2\geq 0, x_1+x_2\leq \delta_0\}.$
        \end{itemize}
   
    \begin{figure}[htb]
			\centering
			\begin{tikzpicture}[scale=1] 
				\draw[->] (-1,0) -- (7,0) node[right] {\(x_1\)};
				\draw[->] (0,-1) -- (0,7) node[above] {\(x_2\)};
				
				\draw (0,6) node[left] {\((0,\delta_2)\)} -- (6,0) node[below] {\((\delta_2,0)\)}; 
				\draw (0,4) node[left] {\((0,\delta_1)\)} -- (4,0) node[below] {\((\delta_1,0)\)};
				\draw (0,2) node[left] {\((0,\delta_0)\)} -- (2,0) node[below] {\((\delta_0,0)\)};
				\draw (0,6) -- (7,6) node[right] {\(x_2 = \delta_2\)}; 
				\draw (0,4) -- (2,4) node[right] {}; 
				\draw (0,2) -- (2,2) node[right] {}; 
				
				\filldraw[black] (0,4) circle (2pt); 
				\filldraw[black] (0,6) circle (2pt); 
				\filldraw[black] (6,0) circle (2pt);
				\filldraw[black] (0,2) circle (2pt);
				\filldraw[black] (2,0) circle (2pt);
				\filldraw[black] (4,0) circle (2pt);
				
				\node at (0.5, 0.9) {$A_7$};
                \node at (2.1, 0.9) {$A_6$};
                \node at (3, 2) {$A_4$};
                \node at (0.5, 2.9) {$A_5$};
                \node at (4.1, 2.9) {$A_2$};
                \node at (0.5, 4.9) {$A_3$};
                \node at (0.5, 6.5) {$A_1$};

				\node[rotate=-45] at (1.1, 1.1) {\footnotesize\(x_1+x_2 = \delta_0\)};
                \node[rotate=-45] at (3.1, 1.1) {\footnotesize\(x_1+x_2 = \delta_1\)};
                \node[rotate=-45] at (5.1, 1.1) {\footnotesize\(x_1+x_2 = \delta_2\)};
                \node at (1, 4.2) {\footnotesize\(x_2 = \delta_1\)};
                \node at (1, 2.2) {\footnotesize\(x_2 = \delta_0\)};
			\end{tikzpicture}
			\caption{Regions for Capital Injection Decisions}
			\label{fig:capitaltransfer}
		\end{figure}
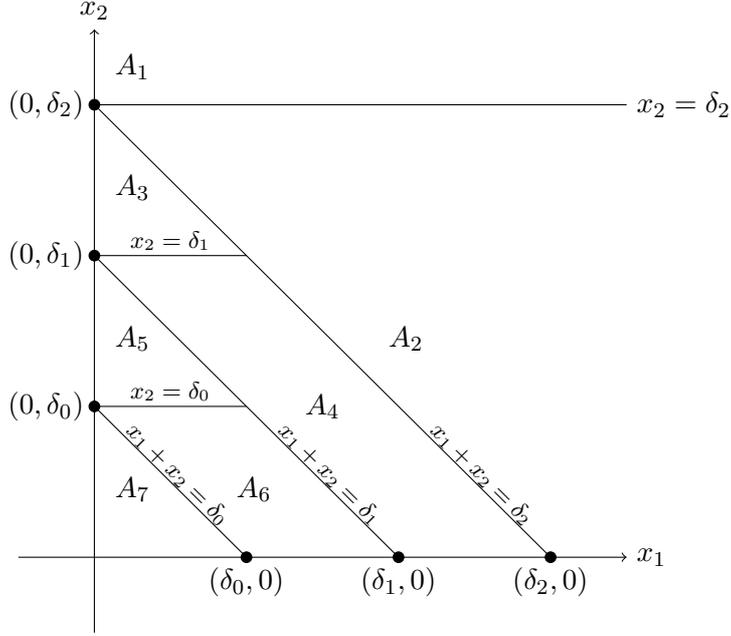

\begin{theorem}
    \label{thm:op_L}
The optimal capital injection strategy is given by one of the following cases:
\begin{enumerate}

            \item If $x\in A_1$ and Line 1 hits zero, the manager transfers an amount of $x_2-\delta_2$ from Line 2 to Line 1, and we proceed to region $A_{2}$. If Line 1 does not hit zero, we stay in $A_1$ until we move to region $A_2$ or $A_3$.

            \item If $x\in A_2$ and Line 2 hits zero, the manager transfers an amount of $x_1-\delta_2$ from Line 1 to Line 2. We stay in $A_2$ until we move to region $A_1$, $A_3$, or $A_4$, regardless of whether Line 2 hits zero.

            \item If $x\in A_3$ and Line 1 hits zero, the manager transfers an amount of $x_2-\delta_1$ from Line 2 to Line 1, and we proceed to region $A_4$. If Line 1 does not hit zero, we stay in $A_3$ until we move to region $A_2$, $A_4$, or $A_5$.

            \item If $x\in A_4$ and Line 2 hits zero, the manager transfers an amount of $x_1-\delta_1$ from Line 1 to Line 2. We stay in $A_4$ until we move to region $A_2$, $A_3$, $A_5$, or $A_6$, regardless of whether Line 2 hits zero.

            \item If $x\in A_5$ and Line 1 hits zero, the manager transfers an amount of $x_2-\delta_0$ from Line 2 to Line 1, and we proceed to region $A_6$. If Line 1 does not hit zero, we stay in $A_5$ until we move to region $A_4$, $A_6$, or $A_7$.

            \item If $x\in A_6$ and Line 2 hits zero, the manager transfers an amount of $x_1-\delta_0$ from Line 1 to Line 2. We stay in $A_6$ until we move to region $A_4$, $A_5$, or $A_7$, regardless of whether Line 2 hits zero.

            \item If $x\in A_7$, we stay in $A_7$ until we move to region $A_6$. The problem ends when the surplus pair leaves the positive quadrant.
		\end{enumerate}	
\end{theorem}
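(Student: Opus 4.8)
The plan is to apply the verification lemma, exploiting the fact that the value function depends only on the aggregate surplus. Recall from Theorems \ref{prop:w0<u1<u2}--\ref{prop:u1<u2<w0} that $V(x_1,x_2)=g(x_1+x_2)$ with $\partial V/\partial x_1=\partial V/\partial x_2=g'(x)$, and that the optimal reinsurance and dividend controls depend only on $x=x_1+x_2$. The first observation is that the capital injection cancels in the aggregate dynamics: summing the two lines in \eqref{eq:SurplusProcess} gives
\begin{equation*}
    \dd\big(X_1(t)+X_2(t)\big)=\sum_{i=1}^2\big[(1-\theta_i^*)\mu_i-C_i^*\big]\,\dd t-\sum_{i=1}^2(1-\theta_i^*)\sigma_i\,\dd W_i(t),
\end{equation*}
so $X:=X_1+X_2$ is a one-dimensional diffusion whose law is fully determined by $(\theta_i^*,C_i^*)$ and is \emph{unaffected} by $(L_1,L_2)$. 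Consequently every capital transfer---continuous or lump-sum---conserves $x$ and leaves $V=g(x)$ unchanged: in Itô's formula the continuous singular contribution equals $\int(\partial V/\partial x_1-\partial V/\partial x_2)\,\dd L_1^c+\int(\partial V/\partial x_2-\partial V/\partial x_1)\,\dd L_2^c$, which vanishes because $\partial V/\partial x_1=\partial V/\partial x_2$, while each jump satisfies $V(\text{post})=V(\text{pre})$ since the transfer preserves $x_1+x_2$.

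Applying Itô's formula to $e^{-\beta t}g(X(t))$ under $u^*=(\theta_1^*,\theta_2^*,L_1^*,L_2^*,C_1^*,C_2^*)$ and invoking $\Lc^{\vartheta^*}(V)=0$ (established in the three theorems), a standard localization argument (stopping at $\tau\wedge n$ and letting $n\to\infty$) yields
\begin{equation*}
    g(x)=J(x_1,x_2;u^*)+\Eb\!\left[e^{-\beta\tau}g\big(X(\tau)\big)\right].
\end{equation*}
Because $g$ is nonnegative, increasing, bounded above by $\frac{a\oc_1+(1-a)\oc_2}{\beta}$, and satisfies $g(0)=0$, the boundary term vanishes as soon as either $\tau=\infty$ (then $e^{-\beta\tau}\to0$) or $X(\tau)=0$ at a finite $\tau$. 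Hence the whole proof reduces to showing that the prescribed transfers force the first ruin time $\tau$ in \eqref{eqn:ruin} to coincide with the aggregate ruin time $\inf\{t:X(t)=0\}$; granting this, $J(x_1,x_2;u^*)=g(x)=V(x_1,x_2)$ and $u^*$ attains the value function.

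The core task is therefore to verify that the seven-region construction keeps both lines nonnegative until the aggregate vanishes. I would argue region by region using Figure \ref{fig:capitaltransfer} and the ordering $\delta_0\le\delta_1\le\delta_2$. In each ``donor'' region $A_1,A_3,A_5$ the transfer triggered at $x_1=0$ moves mass from Line 2 to Line 1 and lands the pair at $(x_2-\delta,\delta)$, where $\delta\in\{\delta_2,\delta_1,\delta_0\}$ is the threshold attached to that region; since the region's defining inequality guarantees $x_2>\delta$ at the moment of the transfer, the amount $x_2-\delta$ is strictly positive, both coordinates remain positive, and $x$ is conserved. The regions $A_2,A_4,A_6$ are handled symmetrically with the two lines exchanged. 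One then checks that each landing point falls in one of the neighboring regions recorded in Theorem \ref{thm:op_L}, so that, as the aggregate diffuses downward and crosses the diagonals $x_1+x_2=\delta_2,\delta_1,\delta_0$ in turn, the active cap steps down $\delta_2\to\delta_1\to\delta_0$ while the transfers alternate according to which axis is struck. In the innermost region $A_7=\{x\le\delta_0\}$, the retentions $1-\theta_i^*$ tend to $0$ as $x\to0$, so the aggregate drift and volatility degenerate and the origin is the unique absorbing (ruin) state; there the strategy reflects at both axes (a Skorokhod double reflection, again conserving $x$), so the pair cannot exit the positive quadrant except at the origin, i.e.\ exactly when $X=0$.

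The main obstacle will be the rigorous well-posedness and admissibility of this reflected process. One must show that the construction requires only finitely many lump transfers on any bounded time interval, that the double reflection in $A_7$ corresponds to a solvable Skorokhod problem producing $(L_1,L_2)$ that are nonnegative, nondecreasing, and right-continuous (hence admissible per Definition \ref{def:ad}), and that the region bookkeeping is internally consistent---in particular that a trajectory exiting a region across a diagonal re-enters one of the neighboring regions declared in Theorem \ref{thm:op_L}, so the transition diagram closes. Together with the uniform integrability needed to discard the Itô martingale term and to justify the limit in $\tau\wedge n$, these facts force $\Eb[e^{-\beta\tau}g(X(\tau))]=0$, completing the verification.
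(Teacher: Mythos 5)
The paper itself does not actually supply a proof of Theorem \ref{thm:op_L}: Section \ref{section:proof of main} states explicitly that it proves only Theorems \ref{prop:w0<u1<u2}, \ref{prop:u1<w0<u2}, and \ref{prop:u1<u2<w0}, and no argument for the capital-injection theorem appears elsewhere. So there is no ``paper proof'' to match your attempt against; your verification skeleton --- $V(x_1,x_2)=g(x_1+x_2)$, transfers conserve the aggregate and hence the value because $\partial V/\partial x_1=\partial V/\partial x_2$, It\^o plus $\Lc^{\vartheta^*}(V)=0$ gives $g(x)=J(x_1,x_2;u^*)+\Eb[e^{-\beta\tau}g(X(\tau))]$, and optimality reduces to killing the boundary term --- is the natural route and is consistent with the paper's HJB analysis. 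Your region-by-region bookkeeping for $A_1$--$A_6$ is also correct: each prescribed transfer is strictly positive, conserves $x_1+x_2$, and lands in the neighbouring region named in the theorem.

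The genuine gap is your treatment of $A_7$, which is where the whole argument is decided. The theorem prescribes \emph{no} transfers in $A_7$ and says the problem ends when the surplus pair leaves the positive quadrant; you instead impose a double Skorokhod reflection at both axes and conclude that the pair can only exit at the origin. These are different strategies, and the distinction is not cosmetic. In $A_7$ the optimal retention is $1-\theta_i^*=x/w_i$ with $x=x_1+x_2$, so the volatility of the \emph{individual} line $X_i$ is proportional to the aggregate $x$, not to $X_i$ itself; it is therefore non-degenerate on the axis $\{x_i=0,\,x_{3-i}>0\}$, and $X_1$ (say) hits zero in finite time with positive probability while $X>0$. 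Under the strategy as literally stated, $\tau<\infty$ with $g(X(\tau))>0$ on an event of positive probability, so $\Eb[e^{-\beta\tau}g(X(\tau))]>0$ and $J(x_1,x_2;u^*)<g(x)$ --- the verification fails. Your reflection argument would repair this (the aggregate then behaves like a geometric-Brownian-type process that never reaches zero, so $\tau=\infty$ and the boundary term vanishes by boundedness of $g$), but then you are proving optimality of a strategy other than the one in the statement; your claim that ``the origin is the unique absorbing state'' is false for the stated strategy. You need to either justify the theorem's $A_7$ prescription directly (which, by the computation above, cannot be done via the asserted coincidence of first ruin and aggregate ruin) or make explicit that the strategy must be amended in $A_7$. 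Separately, the well-posedness of the reflected dynamics, the finiteness of the number of lump transfers, and the uniform integrability needed to pass to the limit in $\tau\wedge n$ are flagged but not carried out, so even granting the $A_7$ fix the proposal remains a sketch rather than a complete proof.
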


\subsection{Remaining cases}
\label{sub:remain}

In this section, we discuss the remaining cases that are not covered in Section \ref{main subsection}. Since the analysis and technical proofs are similar to those in Section \ref{main subsection}, we directly present the results. Recall that we assume $0 <\rho < \frac{\mu_1 \slash \mu_2}{\sigma_1\slash \sigma_2} < \frac{1}{\rho}$ in Section \ref{main subsection} (see \eqref{eq:rho_case1}).

First, we consider the case that 
\begin{align}
\label{eq:rho_case2}
    \rho \ge  \frac{\mu_1 \slash \mu_2}{\sigma_1\slash \sigma_2} > 0.
\end{align}
In this case, we know from \eqref{eqn:optimal_theta} that for the candidate reinsurance strategy for Line 1, $\widehat\theta_1(x)\geq 1$ holds for all $x\geq 0$. As such, with the constraint $\theta_i\in[0,1]$ in place, we have $\theta^*_1(x) = 1$, and Line 1 cedes all of its risk to the reinsurer. Previously in Section \ref{main subsection}, we define $w_0$ as the zero of $\widehat{\theta}_1$ in \eqref{eqn:defn of w0}; now when \eqref{eq:rho_case2} holds, we define $w_0$ as the zero of $\widehat{\theta}_2$ by 
\begin{equation}
\label{eq:w0_case2}
        \widehat\theta_2(w_0)=1+\frac{1-\gamma_1}{w_2}\cdot\frac{g'(w_0)}{g''(w_0)}=0.
    \end{equation}    

\begin{proposition}
\label{prop:case2}
    Assume that the correlation coefficient $\rho$ satisfies \eqref{eq:rho_case2}. We define $w_0$ by \eqref{eq:w0_case2} and constants $N_i$ by 
    \begin{align*}
     N_1=\mu_2^2, \quad N_2=\mu_2^2+2\beta\sigma_2^2, \quad N_3=\mu_2, \quad \text{and} \quad N_4=\sigma_2^2,
    \end{align*}
    and the rest of the notation follows from those defined in Section \ref{main subsection}. The manager's optimal reinsurance strategy $(\theta_1^*, \theta_2^*)$ is given by one of the following scenarios:
       \begin{enumerate}
        \item If $\overline c_1+\overline c_2\geq \frac{\beta w_0}{\gamma_1(1-\gamma_1)}=\frac{N_3N_2}{2N_1}=\frac{\mu_2}{2}+\frac{\sigma_2^2\beta}{\mu_2}$ and $\psi(\alpha_0)\leq 0$, then
        \begin{equation}
            (\theta_1^*, \theta_2^*)(x) =
            \begin{cases}
                \left(1,1-\frac{x}{w_0}\right)&\mbox{if $x<w_0$,}\\
                \left(1,0\right)&\mbox{if $x \ge w_0$}.
            \end{cases}
        \end{equation}
        \item If $\overline c_1+\overline c_2\geq \frac{\mu_2}{2}+\frac{\sigma_2^2\beta}{\mu_2}$ and $\psi(\alpha_0)> 0$, then
        \begin{equation}
            (\theta_1^*, \theta_2^*)(x) =
            \begin{cases}
                \left(1,1-\frac{x}{w_2}\right)&\mbox{if $x<u_1$,}\\
                \left(1,1-\frac{1-\gamma_1}{w_2}\chi'(\chi^{-1}(x))\right)&\mbox{if $u_1 \ge x<w_0$,}\\
                \left(1,1-\frac{1-\gamma_1}{w_2}\chi'(\chi^{-1}(w_0))\right)&\mbox{if $x \ge w_0$}.
            \end{cases}
        \end{equation}
        \item If $\overline c_1+\overline c_2< \frac{\mu_2}{2}+\frac{\sigma_2^2\beta}{\mu_2}$, then
        \begin{equation}
            (\theta_1^*, \theta_2^*)(x) =
            \begin{cases}
                \left(1,1-\frac{x}{w_2}\right)&\mbox{if $x<u_1$,}\\
                \left(1,1-\frac{1-\gamma_1}{w_2}\chi'(\chi^{-1}(x))\right)&\mbox{if $u_1 \ge x<u_2$,}\\
                \left(1,1+\frac{1-\gamma_1}{w_2\gamma_3}\right)&\mbox{if $x \ge u_2$}.
            \end{cases}
        \end{equation}
    \end{enumerate}
    Moreover, the candidate strategy defined in \eqref{eqn:optimal dividend} is the optimal dividend strategy $(C_1^*,C_2^*)$, and the optimal capital transfer strategy is the same as the one stated in Theorem \ref{thm:op_L}.
\end{proposition}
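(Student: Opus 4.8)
The plan is to reduce Proposition \ref{prop:case2} to the three main theorems of Section \ref{main subsection} by exploiting the structural simplification that occurs under \eqref{eq:rho_case2}. Since $\rho \ge \frac{\mu_1/\mu_2}{\sigma_1/\sigma_2}$ forces $\widehat\theta_1(x) \ge 1$ for all $x \ge 0$, the constraint $\theta_1 \in [0,1]$ pins down $\theta_1^* \equiv 1$, so Line 1 is always fully reinsured. First I would substitute $\theta_1 = 1$ into the generator \eqref{eq:generator} and the reinsurance optimization \eqref{eqn:optim reinsurance}. With $\theta_1 = 1$, the cross term $\rho\sigma_1\sigma_2(1-\theta_1)(1-\theta_2)g''$ and the Line-1 diffusion term both vanish, so $\mathcal{H}(x)$ collapses to a univariate optimization over $\theta_2$ alone. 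The key computation is to verify that the resulting single-line problem for Line 2 has exactly the structure of Section \ref{main subsection} but with the effective constants $N_1 = \mu_2^2$, $N_2 = \mu_2^2 + 2\beta\sigma_2^2$, $N_3 = \mu_2$, $N_4 = \sigma_2^2$. I would check that these are precisely the $\rho$-free limits of \eqref{eqn:N1N2N3N4} when Line 1 is removed: setting $\theta_1 = 1$ effectively sends the Line-1 contributions to zero, and one reads off $N_1 = (\mu_2\sigma_1 \cdot 0 - \cdots)$-type expressions, which should reduce cleanly to $\mu_2^2$ and companions.

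Next I would confirm that the candidate reinsurance strategy for Line 2 from \eqref{eqn:optimal_theta}, after setting $\theta_1 = 1$, becomes $\widehat\theta_2(x) = 1 + \frac{1-\gamma_1}{w_2}\frac{g'(x)}{g''(x)}$ with $w_2$ now reading $w_2 = \frac{(1-\gamma_1)\sigma_2^2}{\mu_2}$ (the degenerate-$\rho$ form), so that the zero condition \eqref{eq:w0_case2} matches \eqref{eqn:defn of w0} with the roles adapted. The three scenarios in Proposition \ref{prop:case2} then correspond one-to-one to the three conditions partitioning Theorems \ref{prop:w0<u1<u2}, \ref{prop:u1<w0<u2}, and \ref{prop:u1<u2<w0}: Scenario 1 matches Condition $(i)$ with $\psi(\alpha_0) \le 0$, Scenario 2 matches Condition $(i)$ with $\psi(\alpha_0) > 0$, and Scenario 3 matches the failure of Condition $(i)$. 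The threshold $\frac{N_3 N_2}{2N_1} = \frac{\mu_2}{2} + \frac{\sigma_2^2\beta}{\mu_2}$ is obtained by direct substitution, so I would simply record this identity. The value function $g$ and the HJB verification carry over verbatim from the corresponding main theorem, now interpreted with the reduced constants; the dividend candidate \eqref{eqn:optimal dividend} and the capital-injection argument of Theorem \ref{thm:op_L} are unaffected by the choice of $\theta_1$, hence transfer directly.

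The main obstacle I anticipate is verifying that the HJB equation \eqref{eqn:hjborig} is genuinely satisfied at the boundary $\theta_1 = 1$, rather than merely at an interior critical point. Because $\theta_1^*$ sits on the boundary of $[0,1]$, I cannot invoke the first-order condition for $\theta_1$; instead I must check that the supremum in $\mathcal{H}(x)$ over $\theta_1 \in [0,1]$ is genuinely attained at the endpoint $\theta_1 = 1$ for every $x \ge 0$. This requires showing that the derivative of the Line-1 part of \eqref{eqn:optim reinsurance} with respect to $\theta_1$ is nonpositive throughout $[0,1]$, i.e. $-\mu_1 g'(x) - \sigma_1^2(1-\theta_1)g''(x) - \rho\sigma_1\sigma_2(1-\theta_2)g''(x) \le 0$; using concavity $g'' < 0$ and $g' > 0$, together with the case hypothesis $\rho \ge \frac{\mu_1/\mu_2}{\sigma_1/\sigma_2}$, this inequality should follow, but the estimate couples $\theta_1$ and $\theta_2$ and must be checked uniformly over both variables and over all surplus regimes. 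Once this boundary optimality is established, the remainder is a faithful transcription of the Section \ref{main subsection} arguments with the substituted constants, and the verification lemma cited after Proposition \ref{prop:hjb} delivers that $g$ is the value function and that the stated strategies are optimal.
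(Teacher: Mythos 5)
Your reduction is exactly the route the paper takes: under \eqref{eq:rho_case2} the unconstrained candidate satisfies $\widehat\theta_1(x)\ge 1$, so $\theta_1^*\equiv 1$, the cross term and the Line-1 diffusion term drop out of \eqref{eqn:optim reinsurance}, and the problem collapses to the univariate machinery of Section \ref{main subsection} with $N_1=\mu_2^2$, $N_2=\mu_2^2+2\beta\sigma_2^2$, $N_3=\mu_2$, $N_4=\sigma_2^2$; the paper offers no further proof beyond this observation, so your outline matches it in substance. One correction to your boundary check: for the supremum over $\theta_1\in[0,1]$ to sit at the \emph{upper} endpoint $\theta_1=1$, the derivative $-\mu_1 g'(x)-\sigma_1^2(1-\theta_1)g''(x)-\rho\sigma_1\sigma_2(1-\theta_2)g''(x)$ must be nonnegative, not nonpositive, on $[0,1]$ (by concavity in $\theta_1$ it suffices to check the endpoint); nonpositivity would place the maximizer at $\theta_1=0$. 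With the sign fixed, the endpoint condition reads $\mu_1 g'(x)\le\rho\sigma_1\sigma_2(1-\theta_2)\bigl(-g''(x)\bigr)$, which under the interior FOC $1-\theta_2=-\mu_2 g'(x)/\bigl(\sigma_2^2 g''(x)\bigr)$ reduces exactly to $\rho\ge\frac{\mu_1\sigma_2}{\mu_2\sigma_1}$, i.e.\ \eqref{eq:rho_case2}; as you rightly note, the regimes where $\theta_2$ is clipped to $0$ or $1$ still require a separate KKT verification in the spirit of Lemma \ref{lemma:plateau}, a step that neither your sketch nor the paper carries out explicitly.
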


	\begin{remark}
		Since Line 1 transfers all of its risk to the reinsurer, the manager focuses on managing Line 2's risk. This reduces the problem to just one business line, which is well studied in the literature. 
        For instance, \cite{hojgaard1999} solve a similar problem, and the condition in their Theorem 2.1 is parallel to $\overline c_1+\overline c_2\geq \frac{\mu_2}{2}+\frac{\sigma_2^2\beta}{\mu_2}$ in the above proposition. 
	\end{remark}

Second, we consider the case of $\frac{\mu_1 \slash \mu_2}{\sigma_1\slash \sigma_2}\geq\frac{1}{\rho} > 0$.
In this case, $\widehat\theta_2(x)=1$ for all $x\geq 0$, and thus $\theta_2^* \equiv 1$, implying that Line 2 transfers all of its risk to the reinsurer. As such, this case is similar to the previous case analyzed in Proposition \ref{prop:case2}. The difference is that we define $w_0$ as in \eqref{eqn:defn of w0} and constants $N_i$ by $N_1=\mu_1^2$, $N_2=\mu_1^2+2\beta\sigma_1^2$, $N_3=\mu_1$, and $N_4=\sigma_1^2$.

Last, we consider the negative correlation case of  $-1 < \rho \le 0$.
In this case, we always have $\widehat\theta_1(x)<1$ and $\widehat\theta_2(x)<1$ for all $x>0$. Therefore, the analysis follows from the one in Section \ref{main subsection} using the same values for $N_1$, $N_2$, $N_3$ and $N_4$ defined in \eqref{eqn:N1N2N3N4}. 

    \begin{remark}
        The ratio $\frac{\mu_i}{\sigma_i}$ measures the trade-off between the mean-adjusted risk exposure, $\mu_i:=\kappa_i\tilde\mu_i$, and the volatility of the risk exposure, $\sigma_i$. Hence, the term $\frac{\mu_1 \slash \mu_2}{\sigma_1\slash \sigma_2}$ can be interpreted as the relative Sharpe ratio of Line 1 over Line 2. A small value for $\frac{\mu_1 \slash \mu_2}{\sigma_1\slash \sigma_2}$ implies a more favorable trade-off for Line 2. This aligns with the case of $0<\frac{\mu_1 \slash \mu_2}{\sigma_1\slash \sigma_2}\leq \rho$ wherein the insurer transfers all of the risk of Line 1 to the reinsurer and focuses solely on Line 2. Conversely, a large value for $\frac{\mu_1 \slash \mu_2}{\sigma_1\slash \sigma_2}$ implies that Line 1 has a more attractive trade-off, justifying the insurer's focus on Line 1 instead. However, in the case of $\rho\leq 0$, this ratio is no longer crucial, because the negative correlation provides a hedging effect or a form of diversification.
    \end{remark}

    \section{Numerical Examples}\label{sec numerical}

    In this section, we conduct a numerical analysis to derive further insights from the theoretical results obtained in Section \ref{sec:sol}. Recall that the manager of the insurer makes three decisions on reinsurance $\theta_i$, dividend payout $C_i$, and capital injection $L_i$ for both lines $i=1,2$. Theorems \ref{prop:w0<u1<u2}, \ref{prop:u1<w0<u2}, and \ref{prop:u1<u2<w0}, along with \ref{thm:op_L}, obtain the optimal dividend and capital injection strategies in \emph{closed form}. As a result, in the numerical analysis, we only focus on the manager's reinsurance decisions, which may take corner solutions ($0$ or $1$) or interior solutions from the first-order condition (FOC). We also plot the value function $V$ ($= g$) for visualization. 

    As we learn from Section \ref{sec:sol}, the case of $0<\rho<\frac{\mu_1\slash\mu_2}{\sigma_1\slash\sigma_2}<\frac{1}{\rho}$ analyzed in Section \ref{main subsection} is the most important one. For this case, there are \emph{three} exclusive scenarios: (1) $w_0\leq u_1 \le u_2$ (referred to as the ``\emph{Main Scenario}") in Theorem \ref{prop:w0<u1<u2}, (2) $u_1<w_0\leq u_2$ in Theorem \ref{prop:u1<w0<u2}, and (iii) $u_1 \le u_2 <w_0$ in Theorem \ref{prop:u1<u2<w0}.
    In addition, we consider the main scenario for the cases of $0<\frac{1}{\rho}\leq\frac{\mu_1\slash\mu_2}{\sigma_1\slash\sigma_2}$ and $\rho\leq 0$ in Section \ref{sub:remain}.

    First, it is important to note that the integral term in the value function $V$ in \eqref{eqn:candidate 3} and \eqref{eqn:g w0>u1} is computationally heavy when evaluated directly, due to the integrand being the exponential of an inverse function. We can reduce this computational burden by introducing a change of variables. More precisely, we let $z=\chi^{-1}(y)$. Then,
    \begin{equation}
        \begin{aligned}
            \int_{u_1}^{x}e^{-\chi^{-1}(y)} \dd y
            &=\int_{-\ln(1-a)}^{\chi^{-1}(x)}e^{-z}\chi'(z) \dd z\\
            &=xe^{-\chi^{-1}(x)}-u_1(1-a)+\int_{-\ln(1-a)}^{\chi^{-1}(x)}e^{-z}\chi(z) \dd z\\
            &=xe^{-\chi^{-1}(x)}-u_1(1-a)+\int_{-\ln(1-a)}^{\chi^{-1}(x)}\left[k_1e^{\left(\frac{N_2}{N_1}-1\right)z}+\frac{\overline c_2(N_2-N_1)}{N_2\beta}ze^{-z}+k_2e^{-z}\right] \dd z\\
            &=\left[\frac{k_1N_1}{N_2-N_1}e^{\frac{N_2}{N_1}\chi^{-1}(x)}-\frac{\overline c_2(N_2-N_1)}{N_2\beta}\left(1+\chi^{-1}(x)\right)+x-k_2\right]e^{-\chi^{-1}(x)}\\
            &\quad-(1-a)\left[\frac{k_1N_1}{N_2-N_1}(1-a)^{-\frac{N_2}{N_1}}-\frac{\overline c_2(N_2-N_1)}{N_2\beta}(1-\ln(1-a))+u_1-k_2\right].
        \end{aligned}
    \end{equation}
    This transformation significantly reduces the computational effort required to evaluate the integral, as we only need to compute the inverse of $\chi$ once for each $x$.

    In Figures \ref{fig:row1} to \ref{fig:row5}, we plot the value function $V$ in the left panel and the optimal reinsurance strategies $\theta_1^*$ and $\theta_2^*$ in the right panel. Note that the dotted vertical lines represent the switching points, $u_1$, $u_2$, and $w_0$. By the definition in  \eqref{eqn:defn of u1 & u2}, $u_1$ (resp., $u_2$) is the point at which the slope of $V$ equals $1-a$ (resp., $a$); $w_0$ serves as the threshold point for the manager to stop purchasing reinsurance and bear all of the risk for Line 1 once the aggregate reserve level exceeds $w_0$ (see its definition in \eqref{eqn:defn of w0}.

    To study the case of $0<\rho<\frac{\mu_1\slash\mu_2}{\sigma_1\slash\sigma_2}<\frac{1}{\rho}$, we fix $\mu_1 = 4$, $\mu_2 = 2$, $\sigma_1 = 1.5$, $\sigma_2 = 1$, and $\rho = 0.6$, along with $\beta = 0.5$ and $a = 0.3$. However, the maximum dividend rates $\oc_1$ and $\oc_2$ may vary.
    When $\oc_1 = 3$ and $\oc_2 = 2$, we compute $w_0=0.58 < u_1=0.62 < u_2=1.49$, which corresponds to the scenario of Theorem \ref{prop:w0<u1<u2}, and the results are plotted in Figure \ref{fig:row1}. 
    Similarly, for a different set of $\oc_1$ and $\oc_2$, Figure \ref{fig:row2} (resp., Figure \ref{fig:row3}) corresponds to the scenario of Theorem \ref{prop:u1<w0<u2} (resp., Theorem \ref{prop:u1<u2<w0}). 
    Overall, there is a decreasing relation between the optimal reinsurance strategy $\theta_i^*$ and  the insurer's aggregate surplus level $x$. In all three scenarios, there exists a threshold value beyond which $\theta_i^*$ is flat and reaches its minimum. Such a minimum level is zero in Figures \ref{fig:row1} and \ref{fig:row2}, but is strictly positive in Figure \ref{fig:row3}. That is, when the zero point $w_0$ does not exist (infinity) as in Figure \ref{fig:row3}, the manager transfers some portion of the risk to the reinsurer for both lines. 

    For the case of $0<\frac{\mu_1\slash\mu_2}{\sigma_1\slash\sigma_1}\leq \rho$ , we present the graphs in Figure \ref{fig:row4}. In this case, $\theta_1^* \equiv 1$, and the manager cedes all the risk of Line 1 to the reinsurer, in regardless of the aggregate surplus $x$. But for Line 2, the manager quickly reduces the ceded proportion to 0 when $x$ exceeds the switching point $w_0$. 
    The last case we consider is $-1 < \rho \le 0$, and the results are plotted in Figure \ref{fig:row5}. In this case, $\theta_1^*(x) \ge \theta_2^*(x)$ for all $x$, and $\theta_2^*$ decreases to 0 rapidly.

    It is clear from all figures that the value function $V$ is strictly increasing and strictly concave. Also recall that $V$ approaches the limit $\frac{a\overline c_1+(1-a)\overline c_2}{\beta}$ when the aggregate surplus $x$ increases to infinity. Taking $V$ in Figure \ref{fig:row1} as the ``benchmark," we observe that smaller values of $u_1$ and $u_2$ result in $V$ reaching its limit more rapidly, as seen in Figure \ref{fig:row5}. In contrast, larger values of $u_1$ and $u_2$ lead to a slower convergence to the limit, as shown in Figure \ref{fig:row4}.

    \begin{figure}[h]
    \centering
    \begin{subfigure}[H]{0.45\textwidth}
        \centering
        \includegraphics[width=\linewidth, trim = 0cm 0.5cm 1cm 2cm, clip = true]{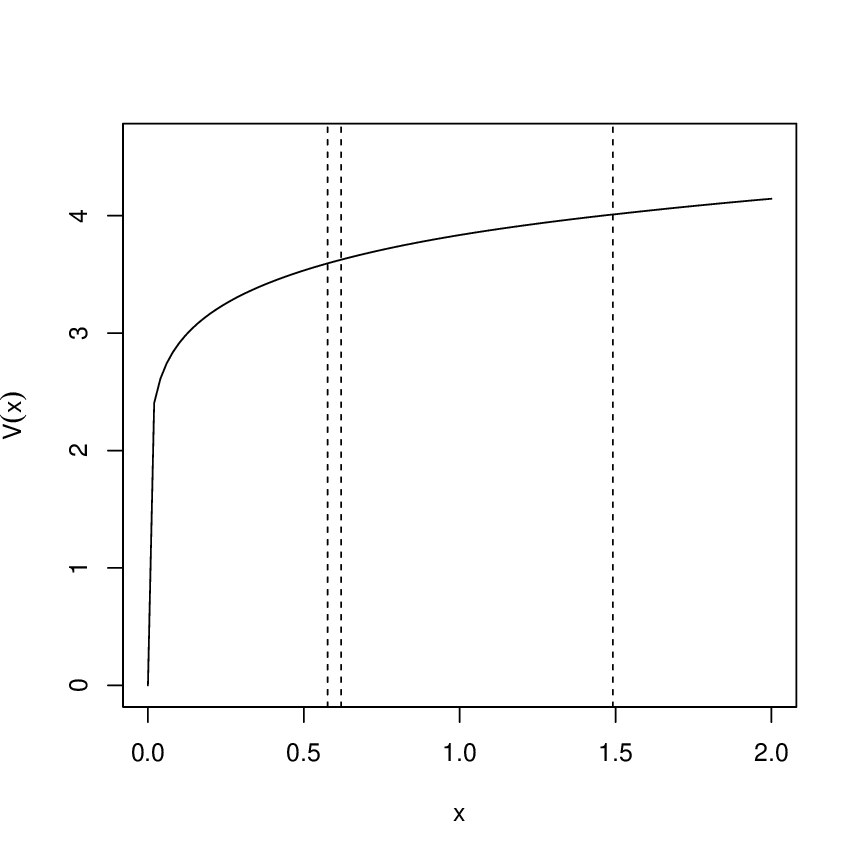} 
    \end{subfigure}
    \hfill
    \begin{subfigure}[H]{0.45\textwidth}
        \centering
        \includegraphics[width=\linewidth, trim = 0cm 0.5cm 1cm 2cm, clip = true]{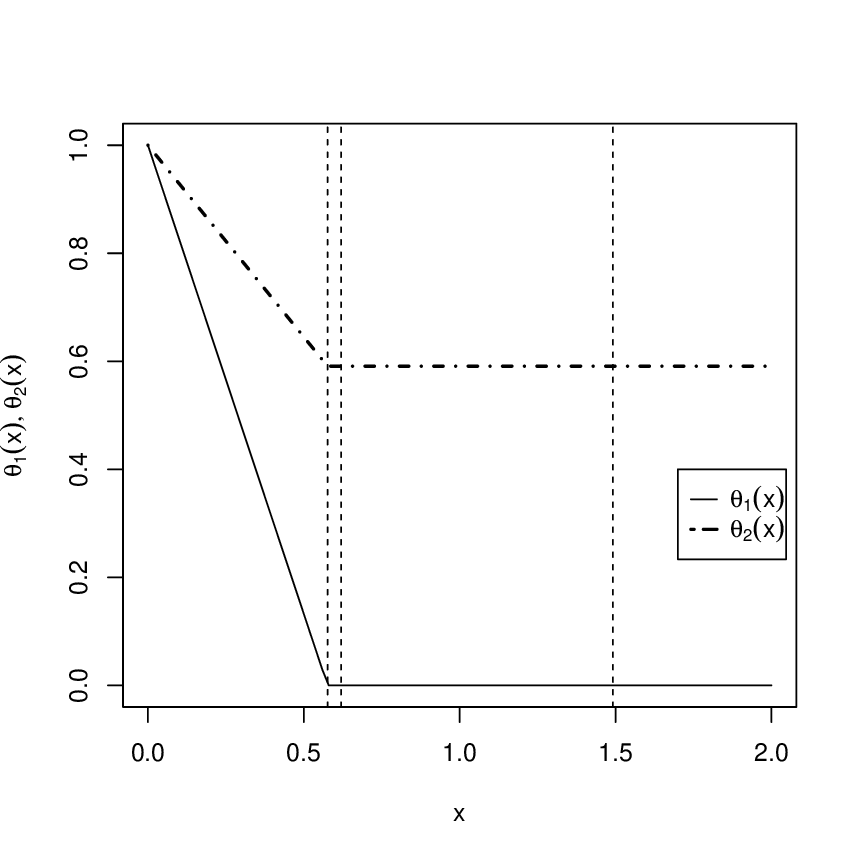}
    \end{subfigure}
    \\[-2ex]
    \caption{$\mu_1=4$, $\mu_2=2$, $\sigma_1=1.5$, $\sigma_2=1$, $\rho=0.6$, $\beta =0.5$, $a=0.3$, $\overline c_1=3$, $\overline c_2=2$ \\($w_0=0.58 < u_1=0.62 < u_2=1.49$, corresponding to Theorem \ref{prop:w0<u1<u2})}
    \label{fig:row1}
    \end{figure}

    \begin{figure}[h]
    \centering
    \begin{subfigure}[H]{0.45\textwidth}
        \centering
        \includegraphics[width=\linewidth, trim = 0cm 0.5cm 1cm 2cm, clip = true]{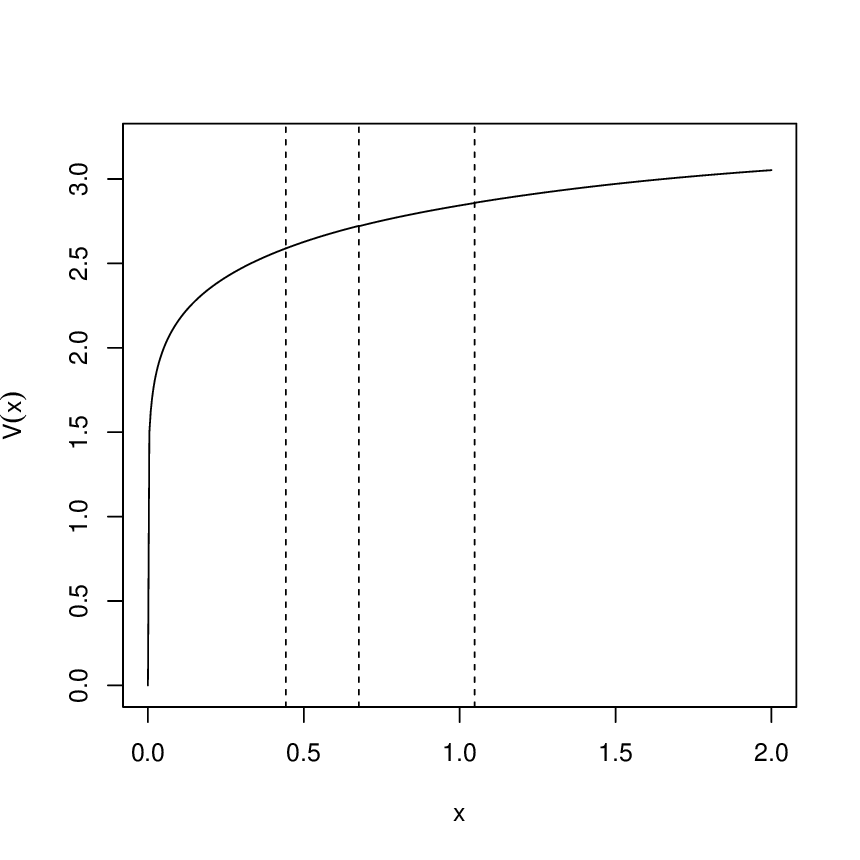} 
        \label{fig:plot3}
    \end{subfigure}
    \hfill
    \begin{subfigure}[H]{0.45\textwidth}
        \centering
        \includegraphics[width=\linewidth, trim = 0cm 0.5cm 1cm 2cm, clip = true]{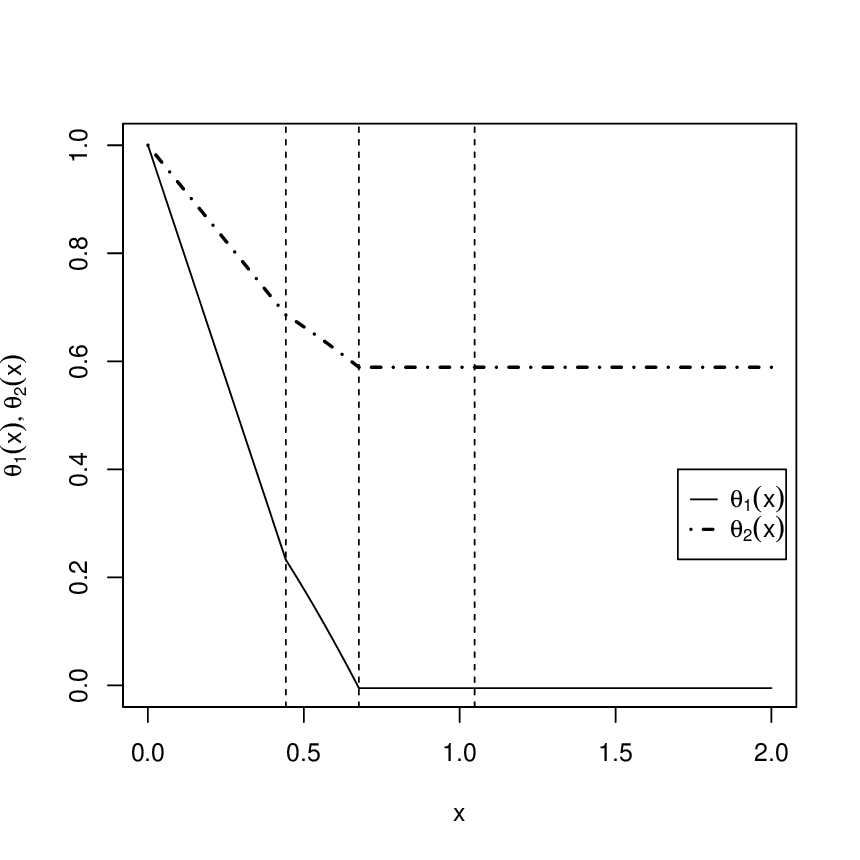}
        \label{fig:plot4}
    \end{subfigure}
    \\[-4ex]
   \caption{$\mu_1=4$, $\mu_2=2$, $\sigma_1=1.5$, $\sigma_2=1$, $\rho=0.6$, $\beta =0.5$, $a=0.3$, $\overline c_1=3$, $\overline c_2=1$ \\($u_1=0.44 < w_0=0.68 < u_2=1.05$, corresponding to Theorem \ref{prop:u1<w0<u2})}
    \label{fig:row2}
    \end{figure}
    
    \begin{figure}[H]
    \begin{subfigure}[H]{0.45\textwidth}
        \centering
        \includegraphics[width=\linewidth, trim = 0cm 0.5cm 1cm 2cm, clip = true]{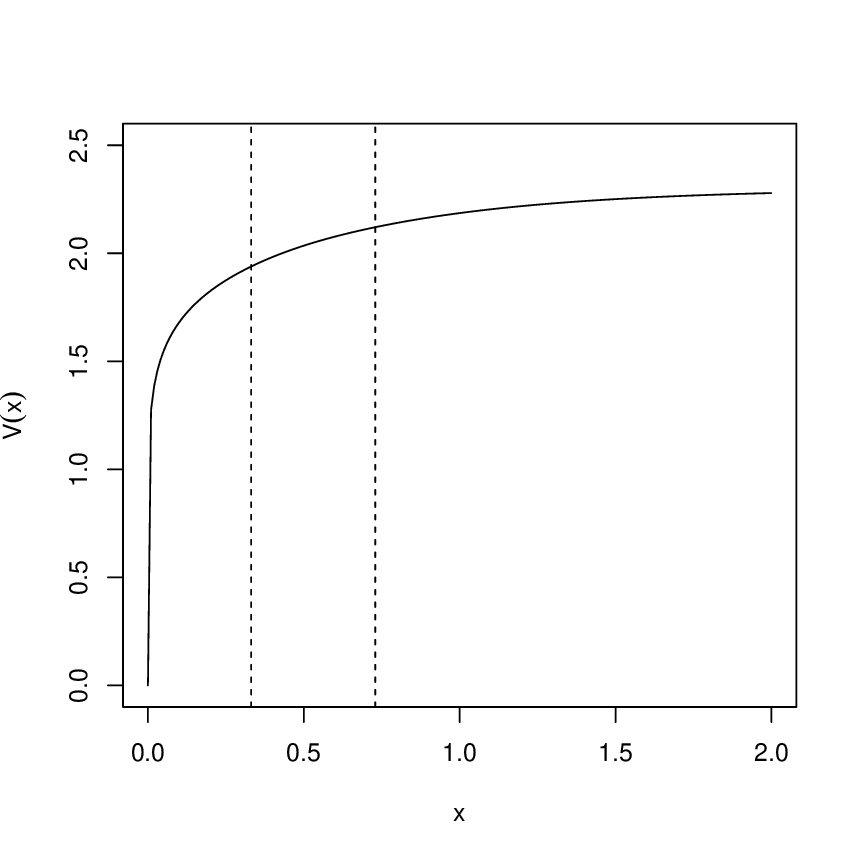}
        \label{fig:plot5}
    \end{subfigure}
    \hfill
    \begin{subfigure}[H]{0.45\textwidth}
        \centering
        \includegraphics[width=\linewidth, trim = 0cm 0.5cm 1cm 2cm, clip = true]{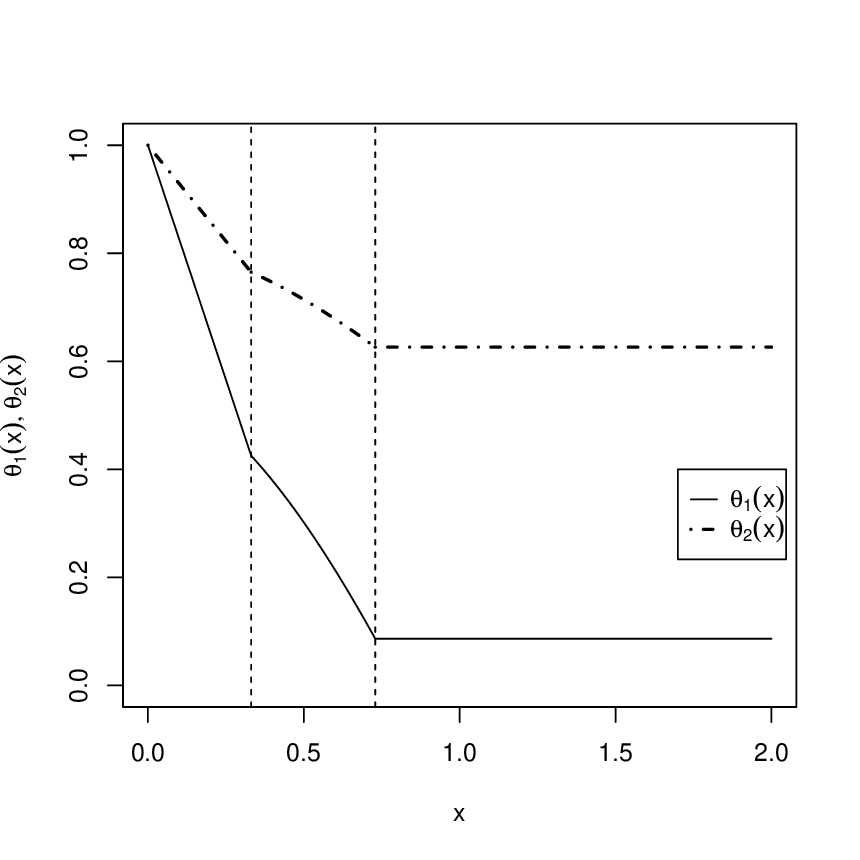}
        \label{fig:plot6}
    \end{subfigure}
    \\[-4ex]
   \caption{$\mu_1=4$, $\mu_2=2$, $\sigma_1=1.5$, $\sigma_2=1$, $\rho=0.6$, $\beta =0.5$, $a=0.3$, $\overline c_1=1.5$, $\overline c_2=1$ \\($u_1=0.33 < u_2=0.73 < w_0 = \infty$, corresponding to Theorem \ref{prop:u1<u2<w0})}
    \label{fig:row3}
    \vspace{0.1cm}
    \end{figure}

    \begin{figure}[H]
    \begin{subfigure}[H]{0.45\textwidth}
        \centering
        \includegraphics[width=\linewidth, trim = 0cm 0.5cm 1cm 2cm, clip = true]{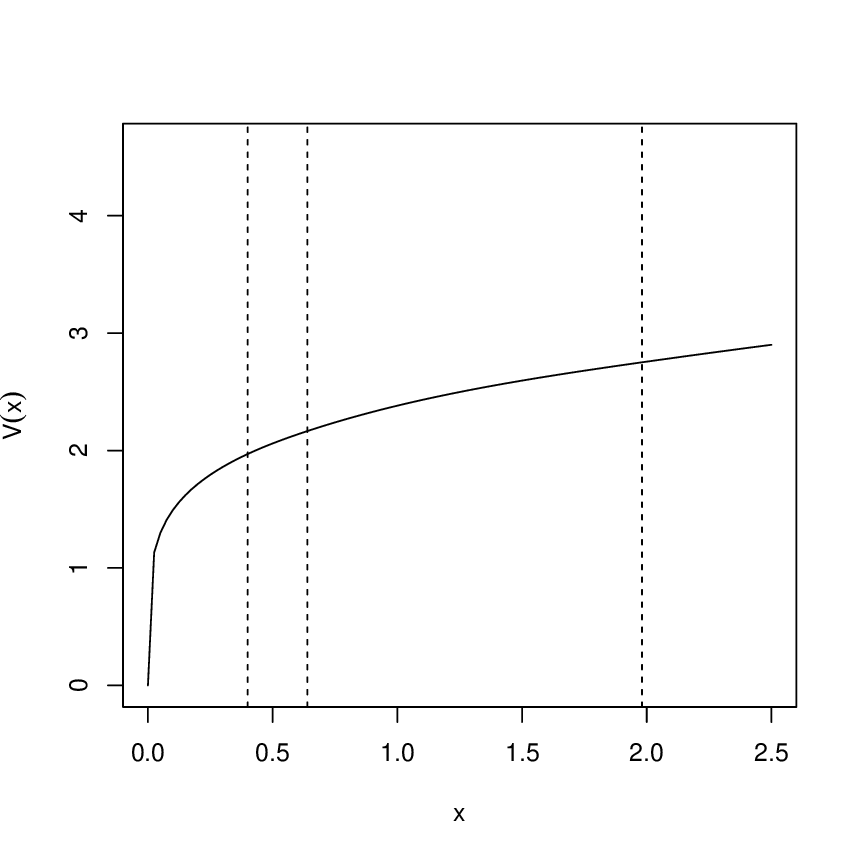}
        \label{fig:plot7}
    \end{subfigure}
    \hfill
    \begin{subfigure}[H]{0.45\textwidth}
        \centering
        \includegraphics[width=\linewidth, trim = 0cm 0.5cm 1cm 2cm, clip = true]{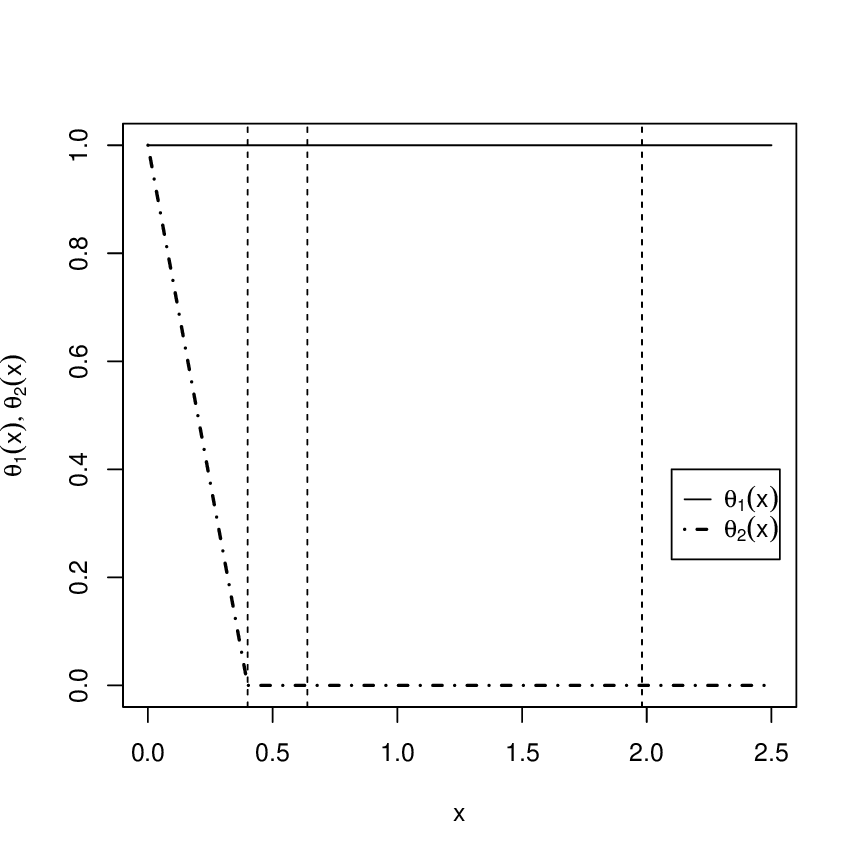}
        \label{fig:plot8}
    \end{subfigure}
    \\[-4ex]
   \caption{$\mu_1=1.5$, $\mu_2=2$, $\sigma_1=1.5$, $\sigma_2=1$, $\rho=0.6$, $\beta =0.5$, $a=0.3$, $\overline c_1=3$, $\overline c_2=2$ \\(The case of $0 < \frac{\mu_1 \slash \mu_2}{\sigma_1 \slash \sigma_2}\le\rho$, with $w_0=0.40$, $u_1=0.64$, and $u_2=1.98$)}
    \label{fig:row4}
     \vspace{-0.1cm}
    \end{figure}

    \begin{figure}[H]
    \begin{subfigure}[H]{0.45\textwidth}
        \centering
        \includegraphics[width=\linewidth, trim = 0cm 0.5cm 1cm 2cm, clip = true]{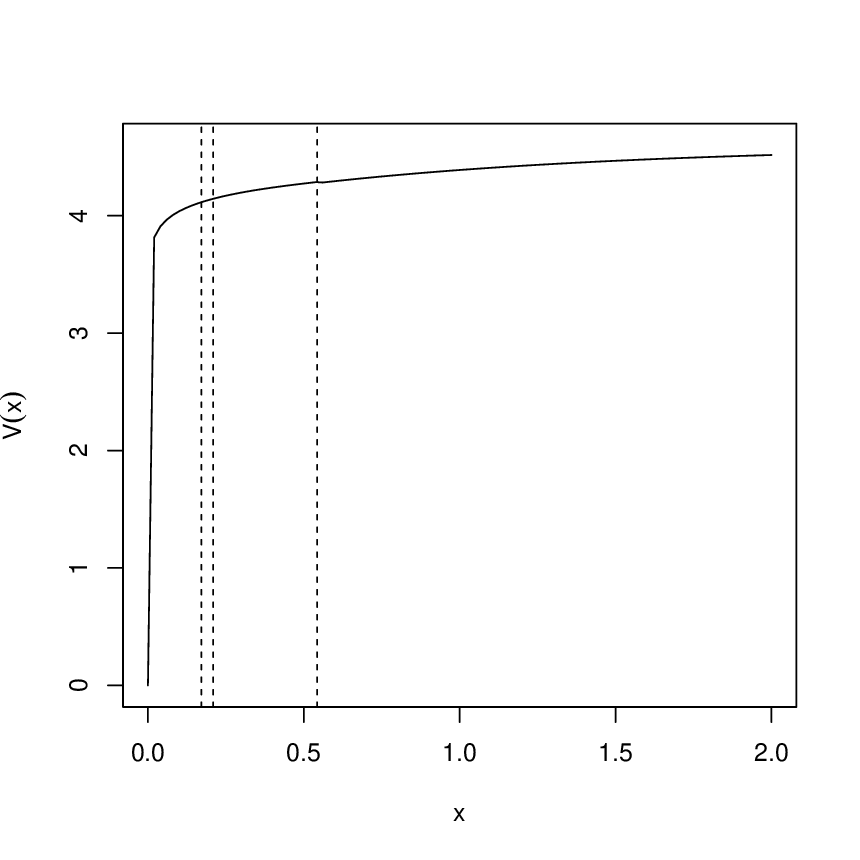}
        \label{fig:plot9}
    \end{subfigure}
    \hfill
    \begin{subfigure}[H]{0.45\textwidth}
        \centering
        \includegraphics[width=\linewidth, trim = 0cm 0.5cm 1cm 2cm, clip = true]{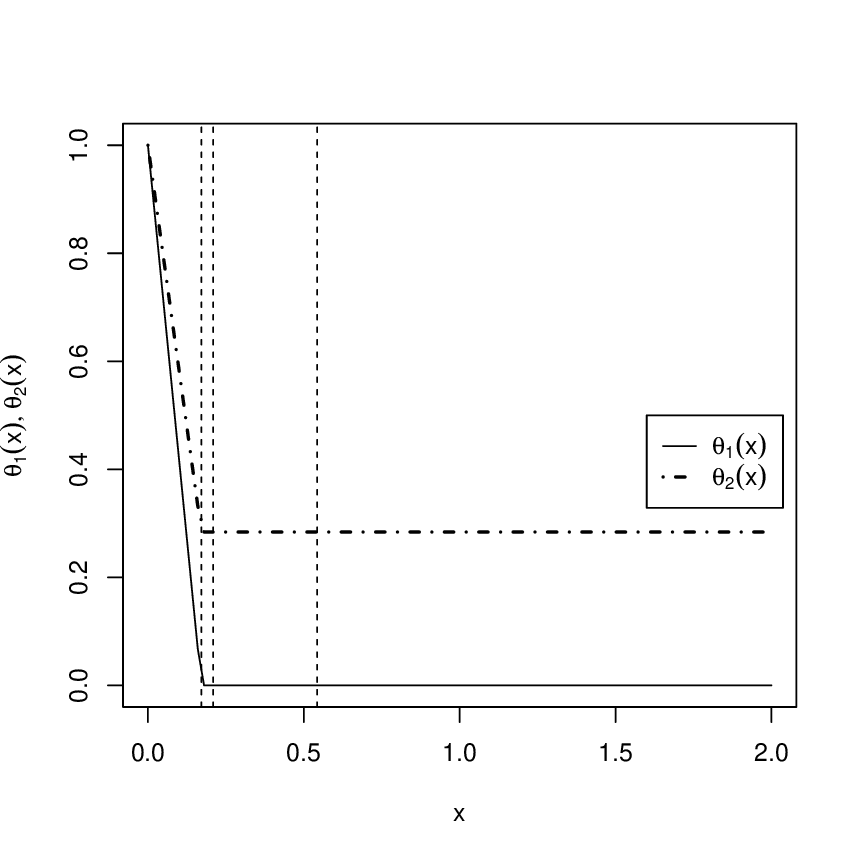}
        \label{fig:plot10}
    \end{subfigure}
    \\[-4ex]
   \caption{$\mu_1=2$, $\mu_2=4$, $\sigma_1=1$, $\sigma_2=1.5$, $\rho=-0.6$, $\beta =0.5$, $a=0.3$, $\overline c_1=3$, $\overline c_2=2$ \\(The case of $-1 < \rho \le 0$, with $w_0=0.17$, $u_1=0.21$, and $u_2=0.54$)}
    \label{fig:row5}
    \end{figure}
\FloatBarrier
    
    \section{Proofs of the Main Results}\label{section:proof of main}
    In this section, we provide the proofs for Theorems \ref{prop:w0<u1<u2}, \ref{prop:u1<w0<u2}, and \ref{prop:u1<u2<w0}.
    
	\subsection{Proof of Theorem \ref{prop:w0<u1<u2}}
    In this section, we present the key results used to obtain Theorem \ref{prop:w0<u1<u2}. The discussion serves as the proof for Theorem \ref{prop:w0<u1<u2}. 
    
    Let Assumption \ref{assume:w1<w2} hold. Moreover, suppose that $\overline c_1+\overline c_2\geq \frac{N_3N_2}{2N_1}$ and $\psi(\alpha_{0})\leq 0$, where $\overline c_i$ is the maximum dividend rate of Line $i$, as required in Theorem \ref{prop:w0<u1<u2}. 
   Recall that     $N_1,N_2,N_3$ are defined in \eqref{eqn:N1N2N3N4}, $\psi$ in \eqref{eqn:psi}, and $\alpha_0$ in \eqref{eqn:alpha_0}. 
    
    \subsubsection*{Deriving the analytical solution.}
    Suppose for now that $w_0\leq u_1\leq u_2$. In the region $\{x<w_0\}$, we must have $ C_i^*=0$ from \eqref{eqn:optimal dividend} for $i=1,2$, and the HJB equation \eqref{eqn:hjborig} becomes
    \begin{equation}\label{eqn:region 1}
			\mathcal{H}(x)-\beta g(x)=0,
    \end{equation}
    where $\mathcal{H}$ is defined in \eqref{eqn:optim reinsurance}. Substituting the candidate reinsurance strategies in \eqref{eqn:optimal_theta} to \eqref{eqn:region 1} yields
    \begin{equation}
        -M\frac{[g'(x)]^2}{g''(x)}-\beta g(x)=0,
    \end{equation}
    whose solution, denoted by $g_1$, is given by
    \begin{equation}
        g_1(x)=K_1x^{\gamma_1},
    \end{equation}
    where $M:=\frac{N_1\beta}{N_2-N_1}$, $\gamma_1$ is defined in \eqref{eqn:gamma1}, and $K_1>0$ is a constant. From \eqref{eqn:defn of w0}, we have $w_0=w_1$. 

    In the region $\{w_0<x<u_1\}$, we still have $(C_1^*, C^*_2)=(0,0)$, and the HJB equation becomes \eqref{eqn:region 1}. By the definition of $w_0$ and the constraint $\theta_1\in[0,1]$, it must hold that $\theta^*_1(x)=0$ for $x>w_0$. The following lemma solves the maximization problem \eqref{eqn:optim reinsurance} at the boundary of $\theta_1$.	\begin{lemma}\label{lemma:plateau}
		For $x>w_0$, 
		\begin{equation*}
	       (\theta^*_1,\theta^*_2)(x)=\left(0,1-\frac{w_0}{w_2}\right).
		\end{equation*}
    \end{lemma}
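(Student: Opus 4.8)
The plan is to solve the constrained reinsurance sub-problem $\mathcal{H}(x)$ in \eqref{eqn:optim reinsurance} on the boundary face $\theta_1=0$ and read off the maximizer in $\theta_2$. The starting observation is that, throughout the continuation region where $C_1^*=C_2^*=0$, the map $(\theta_1,\theta_2)\mapsto \sum_{i}[(1-\theta_i)\mu_i g'+\tfrac12\sigma_i^2(1-\theta_i)^2g'']+\rho\sigma_1\sigma_2(1-\theta_1)(1-\theta_2)g''$ is a quadratic whose Hessian equals $g''(x)$ times the retained-risk covariance matrix $\left(\begin{smallmatrix}\sigma_1^2 & \rho\sigma_1\sigma_2\\ \rho\sigma_1\sigma_2 & \sigma_2^2\end{smallmatrix}\right)$. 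Since $\rho^2<1$ makes this matrix positive definite and $g''<0$ under the concavity hypothesis, the objective is jointly strictly concave, so its maximizer over $[0,1]^2$ is unique; this is what licenses a face-by-face (KKT) analysis.

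First I would justify $\theta_1^*=0$. On $\{x<w_0\}$ the solution is $g_1=K_1x^{\gamma_1}$, so $g'/g''=x/(\gamma_1-1)$ is strictly decreasing (because $\gamma_1\in(0,1)$), and the candidate in \eqref{eqn:optimal_theta} reads $\widehat\theta_1(x)=1+\frac{1-\gamma_1}{w_1}\frac{g'}{g''}=1-\frac{x}{w_0}$, which vanishes exactly at $x=w_0=w_1$ by \eqref{eqn:defn of w0}. As $g'/g''$ keeps decreasing past $w_0$, the unconstrained candidate satisfies $\widehat\theta_1(x)<0$ for $x>w_0$, hence is infeasible; by strict concavity the constrained optimum therefore lies on the edge $\theta_1=0$.

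Next I would fix $\theta_1=0$ and maximize the remaining concave quadratic over $\theta_2\in[0,1]$. The first-order condition $-\mu_2 g'-\sigma_2^2(1-\theta_2)g''-\rho\sigma_1\sigma_2 g''=0$ gives $\theta_2^*=1+\frac{\mu_2}{\sigma_2^2}\frac{g'}{g''}+\frac{\rho\sigma_1}{\sigma_2}$. On this region the relevant value of $g'/g''$ is the one pinned down by $\widehat\theta_1=0$, namely $g'/g''=-\frac{w_1}{1-\gamma_1}=-\frac{N_4}{N_3}$, an identity I would first verify directly from \eqref{eqn:w1 and w2}–\eqref{eqn:N1N2N3N4}. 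Substituting it, the main grind is the algebraic collapse: clearing denominators over $\sigma_2(\mu_1\sigma_2-\rho\mu_2\sigma_1)$ and using $N_1=\mu_1^2\sigma_2^2+\mu_2^2\sigma_1^2-2\rho\mu_1\mu_2\sigma_1\sigma_2$, the numerator of $\frac{\mu_2}{\sigma_2^2}\frac{g'}{g''}+\frac{\rho\sigma_1}{\sigma_2}$ reduces to $-\sigma_1(\mu_2\sigma_1-\rho\mu_1\sigma_2)$, which equals $-w_1/w_2$ times $\sigma_2(\mu_1\sigma_2-\rho\mu_2\sigma_1)$. Hence $\theta_2^*=1-\frac{w_1}{w_2}=1-\frac{w_0}{w_2}$, and feasibility $\theta_2^*\in[0,1)$ follows at once from $0<w_0=w_1\le w_2$ in Assumption \ref{assume:w1<w2}.

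The step I expect to be the main obstacle is the last part of the third paragraph: it is routine to evaluate the $\theta_2$-maximizer at the left endpoint $w_0$, but what the lemma really needs is that the constant $1-w_0/w_2$ is the correct reinsurance control to carry \emph{across the whole plateau} $\{x>w_0\}$, i.e.\ that it continues to attain the supremum in $\mathcal{H}(x)$ once $g$ is replaced by the double-exponential solution of the associated constant-coefficient ODE. I would close this by constructing $g$ on $(w_0,u_1)$ from the frozen controls $(\theta_1,\theta_2)=(0,\,1-w_0/w_2)$ — whose characteristic exponents are exactly $\gamma_{2\pm}$ in \eqref{eqn:gamma234}, since one checks $A_1:=\mu_1+(1-\theta_2)\mu_2=N_3$ and $A_2:=\tfrac12[\sigma_1^2+\sigma_2^2(1-\theta_2)^2+2\rho\sigma_1\sigma_2(1-\theta_2)]=\tfrac12 N_4$ — and then invoking the global verification argument (the HJB inequality holding with equality at this control, together with $\widehat\theta_1<0$ ruling out the interior candidate) to confirm optimality on the entire region, rather than merely at $x=w_0$.
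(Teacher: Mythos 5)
Your overall route is essentially the paper's: its Appendix B likewise treats the reinsurance sub-problem $\mathcal{H}(x)$ as a constrained maximization of a concave quadratic and works on the face $\theta_1=0$, except that it runs an exhaustive KKT sign analysis on the multipliers and uses Assumption \ref{assume:w1<w2} to exclude the corner cases $\theta_2\in\{0,1\}$, where you instead invoke joint strict concavity (your Hessian observation) plus infeasibility of the unconstrained candidate $\widehat\theta_1$. Your algebra is correct throughout: the identity $w_1/(1-\gamma_1)=N_4/N_3$, the collapse of the $\theta_2$ first-order condition to $1-w_1/w_2$, and the identification of the frozen-control drift and variance with $N_3$ and $\tfrac12 N_4$ all check out.

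The genuine gap is in your third paragraph, and you have half-spotted it yourself. The substitution $g'/g''=-w_1/(1-\gamma_1)$ is the \emph{definition} of $w_0$ and holds only at the single point $x=w_0$. On $(w_0,u_1)$ the constructed solution is the double exponential $g_2$, for which $g_2'/g_2''$ is strictly decreasing (the numerator of its derivative equals $-(\gamma_{2+}-\gamma_{2-})^2e^{(\gamma_{2+}+\gamma_{2-})(x-w_0)}<0$), so the ratio drops strictly below $-w_1/(1-\gamma_1)$ for $x>w_0$. Feeding the actual ratio into your face-restricted condition $1-\theta_2=-\tfrac{\mu_2}{\sigma_2^2}\tfrac{g'}{g''}-\tfrac{\rho\sigma_1}{\sigma_2}$ then yields an $x$-dependent retention strictly larger than $w_0/w_2$, not the constant the lemma asserts. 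Your proposed repair --- build $g_2$ from the frozen controls and ``invoke the global verification argument'' --- is circular: verification requires $\sup_{\vartheta}\Lc^{\vartheta}(g_2)=0$ pointwise, which is exactly the claim that the frozen pair attains the supremum of $\mathcal{H}(x)$ for \emph{every} $x>w_0$; ruling out the joint interior candidate via $\widehat\theta_1<0$ does not rule out the face-restricted stationary point for $\theta_2$. In fairness, the paper's own proof is anchored at $w_0$ in the same way (its contradiction in the case $\lambda_1>0$, $\lambda_2=\lambda_4=0$ is obtained by evaluating the inequality ``at $x=w_0$'', and the surviving case forces $g'/g''\equiv w_1/(\gamma_1-1)$, which $g_2$ does not satisfy away from $w_0$), so you have reproduced the published argument faithfully, weak point included; but neither your write-up nor the appendix actually establishes the constancy of $\theta_2^*$ on all of $\{x>w_0\}$.
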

    \begin{proof}
     See Appendix \ref{app:plateau}.
    \end{proof}
	
        \begin{remark}
            Lemma \ref{lemma:plateau} suggests that when the total reserves exceed $w_0$, the reinsurance levels flattens and remains constant for both lines.    
        \end{remark} Using Lemma \ref{lemma:plateau}, \eqref{eqn:region 1} becomes 
	\begin{equation*}
		\frac{1}{2}N_4g''(x)+N_3g'(x)-\beta g(x)=0,
	\end{equation*}
	where $N_3$ and $N_4$ are defined in \eqref{eqn:N1N2N3N4}.
	The solution, denoted by $g_2$, is given by 
	\begin{equation*}
		g_2(x)=K_{2+}e^{\gamma_{2+} x}+K_{2-}e^{\gamma_{2-} x},
	\end{equation*}
	where $\gamma_{2\pm}$ are defined in \eqref{eqn:gamma234} and $K_{2\pm}$ are constants.
	
	In the region $\{u_1< x< u_2\}$, we must have $(\theta^*_1,\theta^*_2,C^*_1,C^*_2)=\left(0,1-\frac{w_0}{w_2},0,\overline c_2\right)$ using Lemma \ref{lemma:plateau}. The corresponding HJB equation becomes
	\begin{equation}\label{eqn:eqn3hjb}
		\frac{1}{2}N_4g''(x)+(N_3-\overline c_2)g'(x)-\beta g(x)+(1-a)\overline{c}_2=0,
	\end{equation}
	whose solution, denoted by $g_3$, is given by 
	\begin{equation*}
		g_3(x)=K_{3+}e^{\gamma_{3+} x}+K_{3-}e^{\gamma_{3-} x}+\frac{(1-a)\overline{c}_2}{\beta},
	\end{equation*}
	where $\gamma_{3\pm}$ are defined in \eqref{eqn:gamma234} and $K_{3\pm}$ are constants.
    
	In the region $\{x>u_2\}$, we must have, via Lemma \ref{lemma:plateau}, $(\theta^*_1,\theta^*_2,C^*_1,C^*_2)=\left(0,1-\frac{w_0}{w_2},\overline c_1,\overline c_2\right)$. The corresponding HJB equation becomes
	\begin{equation}\label{eqn:hjb4}
		\frac{1}{2}N_4g''(x)+(N_3-\overline c_1-\overline c_2)g'(x)-\beta g(x)+a\overline c_1+(1-a)\overline{c}_2=0,
	\end{equation}
	whose solution, denoted by $g_4$, which must satisfy $\lim_{x\to\infty}g(x)=\frac{a\overline{c}_1+(1-a)\overline{c}_2}{\beta}$, is given by 
	\begin{equation*}
		g_4(x)=K_{4-}e^{\gamma_{4-} x}+\frac{a\overline{c}_1+(1-a)\overline{c}_2}{\beta},
	\end{equation*}
	where $\gamma_{4-}$ is defined in \eqref{eqn:gamma234} and $K_{4-}$ is a constant. We then conjecture the following solution:
	\begin{equation*}
		g(x)=
		\begin{cases}
			K_1x^{\gamma_1}&\mbox{if $x<w_0$,}\\
			K_{2+}e^{\gamma_{2+} x}+K_{2-}e^{\gamma_{2-} x}&\mbox{if $w_0<x<u_1$,}\\
			K_{3+}e^{\gamma_{3+} x}+K_{3-}e^{\gamma_{3-} x}+\frac{(1-a)\overline{c}_2}{\beta}&\mbox{if $u_1<x<u_2$,}\\
			K_{4-}e^{\gamma_{4-} x}+\frac{a\overline{c}_1+(1-a)\overline{c}_2}{\beta}&\mbox{if $x>u_2$,}
		\end{cases}
	\end{equation*}
	where $K_1,K_{2\pm},K_{3\pm},K_{4-},u_1,u_2$ are unknown constants. To ensure that $g$ is twice continuously differentiable, we require $g$, $g'$, and $g''$ to be continuous at the switching points $w_0,u_1,u_2$.
	
	Since in the neighborhood of $w_0$ the function $g$ satisfies \eqref{eqn:region 1} with $(\theta^*_1,\theta^*_2)=(0,0)$, it suffices to show that $g$ and $g'$ are continuous at $w_0$. That is, if we can show that $g_1(w_0)=g_2(w_0)$ and $g'_1(w_0)=g'_2(w_0)$, then by \eqref{eqn:region 1}, it immediately follows that $g''_1(w_0)=g''_2(w_0)$. Let $\alpha_{2+}:=\frac{K_{2+}}{K_1}e^{\gamma_{2+}w_0}$ and $\alpha_{2-}:=\frac{K_{2-}}{K_1}e^{\gamma_{2-}w_0}$. We then have the following system of equations:
	\begin{equation}\label{eqn:system at w_0}
		\begin{aligned}
			\alpha_{2+}+\alpha_{2-}&=w_0^{\gamma_1}\\
			\gamma_{2+}\alpha_{2+}+\gamma_{2-}\alpha_{2-}&=\gamma_1w_0^{\gamma_1-1},
		\end{aligned}
	\end{equation}
	whose solution is given by
	\begin{equation}
		(\alpha_{2+},\alpha_{2-})=\left(\frac{w_0^{\gamma_1-1}(\gamma_1-\gamma_{2-}w_0)}{\gamma_{2+}-\gamma_{2-}},\frac{w_0^{\gamma_1-1}(\gamma_{2+}w_0-\gamma_1)}{\gamma_{2+}-\gamma_{2-}}\right).
	\end{equation}
	Using \eqref{eqn:N1N2N3N4}, we have
	\begin{equation*}
		w_0=\frac{N_3}{2\beta}\left(1-\frac{N_1}{N_2}\right)\quad\mbox{and}\quad N_4=\frac{N_3^2}{2\beta}\left(\frac{N_2}{N_1}-1\right).
	\end{equation*}
	Since $N_1<N_2$, we have the following:
	\begin{equation}\label{eqn:simplify alpha 2s}
		\begin{aligned}
			\gamma_{2+}-\gamma_{2-}&=\frac{4\beta\sqrt{N_1N_2}}{N_3(N_2-N_1)}>0\\
			\gamma_1-\gamma_{2-}w_0&=1+\sqrt{\frac{N_1}{N_2}}>0\\
			\gamma_{2+}w_0-\gamma_1&=\sqrt{\frac{N_1}{N_2}}-1<0.
		\end{aligned}
	\end{equation}
	Using $\gamma_{2\pm}=\frac{2\beta N_1}{N_3(N_2-N_1)}\left(-1\pm\sqrt{\frac{N_2}{N_1}}\right)$, we can rewrite $\alpha_{2\pm}$ as 
	\begin{equation}\label{eqn:alpha2+-}
		\alpha_{2+}=-\frac{w_0^{\gamma_1}N_3(N_2-N_1)}{4\beta N_1}\gamma_{2-}>0\quad\mbox{and}\quad\alpha_{2-}=-\frac{w_0^{\gamma_1}N_3(N_2-N_1)}{4\beta N_1}\gamma_{2+}<0.
	\end{equation}
	Consequently,
	\begin{equation*}
		g_2(x)=-\lambda\left[\gamma_{2-}e^{\gamma_{2+}(x-w_0)}+\gamma_{2+}e^{\gamma_{2-}(x-w_0)}\right],
	\end{equation*}
	where $\lambda=\frac{K_1w_0^{\gamma_1}N_3(N_2-N_1)}{4\beta N_1}$ and $K_1>0$ is still unknown.
	
	Let $\alpha_{3+}:=K_{3+}e^{\gamma_{3+}u_1}$ and $\alpha_{3-}:=K_{3-}e^{\gamma_{3-}u_1}$. We obtain $g_3$ in an alternative expression by
	\begin{equation*}
		g_3(x)=\alpha_{3+}e^{\gamma_{3+}(x-u_1)}+\alpha_{3-}e^{\gamma_{3-}(x-u_1)}+\frac{(1-a)\overline{c}_2}{\beta}.
	\end{equation*}By the definition of $u_1$, we have $g'(u_1)=1-a$. Hence,
	\begin{equation*}
		1-a=g'_3(u_1)=\gamma_{3+}\alpha_{3+}+\gamma_{3-}\alpha_{3-},
	\end{equation*}
	or, equivalently,
	\begin{equation}\label{eqn:g'(u1)=1-a}
		\alpha_{3+}=\frac{1}{\gamma_{3+}}\left(1-a-\gamma_{3-}\alpha_{3-}\right).
	\end{equation}
	Similar to the first switching point $w_0$, it suffices to show that $g_2(u_1)=g_3(u_1)$ and $g'_2(u_1)=g'_3(u_1)$ to ensure that $g$ is twice continuously differentiable at $x=u_1$. This requirement yields the following system of equations:
	\begin{equation}\label{eqn:system@u1}
		\begin{aligned}
			\lambda\left(-\gamma_{2-}e^{\gamma_{2+}(u_1-w_0)}-\gamma_{2+}e^{\gamma_{2-}(u_1-w_0)}\right)&=(1-a)\left(\frac{1}{\gamma_{3+}}+\frac{\overline c_2}{\beta}\right)+\left(1-\frac{\gamma_{3-}}{\gamma_{3+}}\right)\alpha_{3-}\\
			\lambda\left(-\gamma_{2+}\gamma_{2-}e^{\gamma_{2+}(u_1-w_0)}-\gamma_{2+}\gamma_{2-}e^{\gamma_{2-}(u_1-w_0)}\right)&=1-a,
		\end{aligned}
	\end{equation}
	where we have used \eqref{eqn:g'(u1)=1-a} to derive the first equation. From the second equation in \eqref{eqn:system@u1}, we obtain $\lambda$ given by \eqref{eqn:lambda}. Dividing the first equation in \eqref{eqn:system@u1} by the second equation yields
	\begin{equation*}
		\frac{\gamma_{2-}e^{\gamma_{2+}(u_1-w_0)}+\gamma_{2+}e^{\gamma_{2-}(u_1-w_0)}}{\gamma_{2+}\gamma_{2-}e^{\gamma_{2+}(u_1-w_0)}+\gamma_{2+}\gamma_{2-}e^{\gamma_{2-}(u_1-w_0)}}=\alpha_3,
	\end{equation*}
	which can be rewritten as
    \begin{equation}\label{eqn:u1}
		u_1=w_0+\frac{1}{\gamma_{2+}-\gamma_{2-}}\ln\left(\frac{\alpha_{2-}(\gamma_{2-}\alpha_3-1)}{\alpha_{2+}(1-\gamma_{2+}\alpha_3)}\right),
	\end{equation} where $\alpha_3$ is defined in \eqref{eqn:alpha_3}. 
    We point out that the well-definedness of $u_1$ has not been established yet. 
    
    Let $\alpha_{4-}:=K_{4-}e^{\gamma_{4-}u_2}$. By the definition of $u_2$, we have $g'(u_2)=a$. Hence,
	\begin{equation*}
		a=g'_4(u_2)=\gamma_{4-}\alpha_{4-},
	\end{equation*}
	or, equivalently,
	\begin{equation}\label{eqn:alpha4-}
		\alpha_{4-}=\frac{a}{\gamma_{4-}}.
	\end{equation}
	We can then rewrite $g_4$ as
	\begin{equation*}
		g_4(x)=\frac{a}{\gamma_{4-}}e^{\gamma_{4-}(x-u_2)}+\frac{a\overline c_1+(1-a)\overline{c}_2}{\beta}.
	\end{equation*}
Similar to the arguments above, it suffices to show that $g'_3(u_2)=g'_4(u_2)$ and $g''_3(u_2)=g''_4(u_2)$, to ensure that $g$ is twice continuously differentiable at $x=u_2$. At $x=u_2$, we have the following system of equations:
	\begin{equation}\label{eqn:system at u_2}
		\begin{aligned}
			\gamma_{3+}\alpha_{3+}e^{\gamma_{3+}(u_2-u_1)}+\gamma_{3-}\alpha_{3-}e^{\gamma_{3-}(u_2-u_1)}&=a\\
			\gamma^2_{3+}\alpha_{3+}e^{\gamma_{3+}(u_2-u_1)}+\gamma^2_{3-}\alpha_{3-}e^{\gamma_{3-}(u_2-u_1)}&=\gamma_{4-}a.
		\end{aligned}
	\end{equation}
	Dividing the first equation by the second equation yields
	\begin{equation*}
		\frac{\gamma^2_{3+}\alpha_{3+}e^{\gamma_{3+}(u_2-u_1)}+\gamma^2_{3-}\alpha_{3-}e^{\gamma_{3-}(u_2-u_1)}}{\gamma_{3+}\alpha_{3+}e^{\gamma_{3+}(u_2-u_1)}+\gamma_{3-}\alpha_{3-}e^{\gamma_{3-}(u_2-u_1)}}=\gamma_{4-},
	\end{equation*}
	which is equivalent to
	\begin{equation}\label{eqn:u2}
		u_2=u_1+\frac{1}{\gamma_{3+}-\gamma_{3-}}\ln\left(\frac{\alpha_{3-}\gamma_{3-}(\gamma_{4-}-\gamma_{3-})}{\alpha_{3+}\gamma_{3+}(\gamma_{3+}-\gamma_{4-})}\right).
	\end{equation}
    Thus, we have obtained the form of the candidate value function $g$ defined in \eqref{eqn:g first} and the formulas for $u_1$ and $u_2$ in \eqref{eqn:u1} and \eqref{eqn:u2}, respectively. However, it must be noted that we have not yet (i) solved for $\alpha_{3-}$, which implies that $u_1$ and $u_2$ may not be well-defined, (ii) guaranteed that $w_0\leq u_1\leq u_2$, and (iii) shown that $g$ is increasing and concave. To resolve these issues, the next steps are to establish the bounds for $\alpha_{3-}$ and to solve for $\alpha_{3-}$ using these bounds.

    \subsubsection*{Establishing the bounds for $\alpha_{3-}$.}

    Since the candidate value function $g$ must be positive for $x>0$, we must have $\alpha_3>0$ from the first equation in \eqref{eqn:system@u1}. Next, it follows from $\alpha_{2-}<0<\alpha_{2+}$ and \eqref{eqn:u1} that $1-\gamma_{2+}\alpha_3>0$. Combining these inequalities for $\alpha_3$ yields $0<\alpha_3<\frac{1}{\gamma_{2+}}$, which is equivalent to
    \begin{equation}\label{eqn:alpha 1st ineq}
		\underline{\alpha}:=-\frac{(1-a)\gamma_{3+}}{\gamma_{3+}-\gamma_{3-}}\left(\frac{1}{\gamma_{3+}}+\frac{\overline{c}_2}{\beta}\right)<\alpha_{3-}<\frac{(1-a)\gamma_{3+}}{\gamma_{3+}-\gamma_{3-}}\left(\frac{1}{\gamma_{2+}}-\frac{1}{\gamma_{3+}}-\frac{\overline{c}_2}{\beta}\right)=:\overline{\alpha}.
	\end{equation}
    It is clear that $\underline\alpha<\overline\alpha$ since $\frac{1}{\gamma_{2+}}>0$. 
    
    We have now established that \eqref{eqn:u1} is well-defined, but it does not guarantee that $w_0\leq u_1$. The following result gives a necessary and sufficient condition to ensure that $w_0\leq u_1$.
	\begin{lemma}\label{lemma:w_0<u_1} 
		$w_0\leq u_1$ if and only if
		\begin{equation*}
			\alpha_{3-}\geq \alpha_0,
		\end{equation*}
        where $\alpha_0$ is defined in \eqref{eqn:alpha_0}.
	\end{lemma}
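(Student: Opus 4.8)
The plan is to work directly from the closed-form expression for $u_1$ in \eqref{eqn:u1} and reduce the inequality $w_0\le u_1$ to an explicit bound on $\alpha_{3-}$. Since $u_1-w_0=\frac{1}{\gam_{2+}-\gam_{2-}}\ln\left(\frac{\alpha_{2-}(\gam_{2-}\alpha_3-1)}{\alpha_{2+}(1-\gam_{2+}\alpha_3)}\right)$ and $\gam_{2+}-\gam_{2-}>0$ by \eqref{eqn:simplify alpha 2s}, the inequality $w_0\le u_1$ holds if and only if the argument of the logarithm is at least $1$. First I would confirm that this argument is positive, so that the logarithm is well-defined: using $\alpha_{2-}<0<\alpha_{2+}$ from \eqref{eqn:alpha2+-}, the bound $0<\alpha_3<1/\gam_{2+}$ from \eqref{eqn:alpha 1st ineq} (which gives $1-\gam_{2+}\alpha_3>0$), and $\gam_{2-}<0$ (so that $\gam_{2-}\alpha_3-1<0$), both numerator and denominator are seen to be positive.

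Next I would clear the positive denominator and rewrite the condition as $\alpha_{2-}(\gam_{2-}\alpha_3-1)\ge\alpha_{2+}(1-\gam_{2+}\alpha_3)$. Substituting the explicit forms $\alpha_{2+}=-\kappa\gam_{2-}$ and $\alpha_{2-}=-\kappa\gam_{2+}$ from \eqref{eqn:alpha2+-}, where $\kappa:=\frac{w_0^{\gam_1}N_3(N_2-N_1)}{4\beta N_1}>0$, and dividing through by $-\kappa<0$, this collapses to the linear inequality $2\gam_{2+}\gam_{2-}\alpha_3\le\gam_{2+}+\gam_{2-}$. The key simplification is then to evaluate the resulting threshold using Vieta's formulas for the characteristic equation $\frac{1}{2}N_4\gam^2+N_3\gam-\beta=0$ satisfied by $\gam_{2\pm}$, namely $\gam_{2+}+\gam_{2-}=-2N_3/N_4$ and $\gam_{2+}\gam_{2-}=-2\beta/N_4$. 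Since $\gam_{2+}\gam_{2-}<0$, dividing by it flips the inequality and yields the clean equivalent condition $\alpha_3\ge\frac{\gam_{2+}+\gam_{2-}}{2\gam_{2+}\gam_{2-}}=\frac{N_3}{2\beta}$.

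Finally I would substitute the definition of $\alpha_3$ from \eqref{eqn:alpha_3}, namely $\alpha_3=\frac{1}{\gam_{3+}}+\frac{\oc_2}{\beta}+\frac{\gam_{3+}-\gam_{3-}}{\gam_{3+}}\cdot\frac{\alpha_{3-}}{1-a}$, into $\alpha_3\ge\frac{N_3}{2\beta}$. Because the coefficient $\frac{\gam_{3+}-\gam_{3-}}{\gam_{3+}}$ is positive and $1-a>0$, isolating $\alpha_{3-}$ preserves the direction of the inequality and produces exactly $\alpha_{3-}\ge\frac{(1-a)\gam_{3+}}{\gam_{3+}-\gam_{3-}}\left(\frac{N_3}{2\beta}-\frac{1}{\gam_{3+}}-\frac{\oc_2}{\beta}\right)=\alpha_0$, which is the claim. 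The main obstacle is not conceptual but sign bookkeeping: every reduction step divides by a quantity whose sign must be tracked (namely $-\kappa<0$, then $\gam_{2+}\gam_{2-}<0$, and finally the positive coefficients at the last step), and a single sign error would reverse the final inequality. Establishing up front the positivity of the logarithm's argument and the signs of $\gam_{2\pm}$ and $\gam_{3\pm}$ is precisely what makes the chain of equivalences airtight.
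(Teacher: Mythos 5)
Your proof is correct, and it follows the same overall skeleton as the paper's: reduce $w_0\le u_1$ to the condition that the argument of the logarithm in the formula for $u_1$ is at least $1$, show this is equivalent to $\alpha_3\ge \frac{N_3}{2\beta}$, and then unwind the definition of $\alpha_3$ to get $\alpha_{3-}\ge\alpha_0$. Where you differ is in how the pivotal middle equivalence is established. The paper introduces the auxiliary function $w(x)=\frac{\alpha_{2-}(\gamma_{2-}x-1)}{\alpha_{2+}(1-\gamma_{2+}x)}$, proves it is strictly increasing on $\left(0,\tfrac{1}{\gamma_{2+}}\right)$, and locates the unique root of $w(x)=1$ via the intermediate value theorem, computing it as $x_0=\frac{\alpha_{2+}+\alpha_{2-}}{\gamma_{2+}\alpha_{2+}+\gamma_{2-}\alpha_{2-}}=\frac{w_0}{\gamma_1}=\frac{N_3}{2\beta}$ from the smooth-fit system at $w_0$. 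You instead substitute $\alpha_{2+}=-\kappa\gamma_{2-}$, $\alpha_{2-}=-\kappa\gamma_{2+}$, which collapses the inequality to the linear condition $2\gamma_{2+}\gamma_{2-}\alpha_3\le\gamma_{2+}+\gamma_{2-}$, and then evaluate the threshold by Vieta's formulas for $\tfrac{1}{2}N_4\gamma^2+N_3\gamma-\beta=0$. Your route is slightly more economical: it needs no monotonicity or intermediate-value argument (and indeed the paper's displayed expression for $w'(x)$ contains a typo, with $\alpha_3$ appearing where $x$ should), at the cost of relying on the specific proportionality $\alpha_{2\pm}\propto-\gamma_{2\mp}$. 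Both arguments require the standing facts $\alpha_{2-}<0<\alpha_{2+}$ and $0<\alpha_3<\tfrac{1}{\gamma_{2+}}$, which the paper establishes just before the lemma, so your use of them is legitimate; your sign bookkeeping (dividing by $-\kappa<0$, then by $\gamma_{2+}\gamma_{2-}<0$, then by the positive coefficient $\frac{\gamma_{3+}-\gamma_{3-}}{(1-a)\gamma_{3+}}$) is all correct.
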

	\begin{proof}
		From \eqref{eqn:u1}, $w_0\leq u_1$ is equivalent to
		\begin{equation*}
			\frac{\alpha_{2-}(\gamma_{2-}\alpha_3-1)}{\alpha_{2+}(1-\gamma_{2+}\alpha_3)}\geq 1.
		\end{equation*}
		Define $w(x):=\frac{\alpha_{2-}(\gamma_{2-}x-1)}{\alpha_{2+}(1-\gamma_{2+}x)}$. We compute
		\begin{equation*}
			w'(x)=\frac{\alpha_{2+}\alpha_{2-}\gamma_{2-}(1-\gamma_{2+}\alpha_3)+\alpha_{2+}\alpha_{2-}\gamma_{2+}(\gamma_{2-}\alpha_3-1)}{\alpha_{2+}^2(1-\gamma_{2+}\alpha_3)^2}>0,
		\end{equation*}
		which implies that $w$ is a strictly increasing function. Moreover, $w$ is continuous on $\left(0,\frac{1}{\gamma_{2+}}\right)$, $\lim_{x\downarrow 0}w(x)=-\frac{\alpha_{2-}}{\alpha_{2+}}>0$, and $\lim_{x\uparrow \frac{1}{\gamma_{2+}}}w(x)=\infty$. From \eqref{eqn:system at w_0}, $\alpha_{2+}+\alpha_{2-}=w_0^{\gamma_1}>0$, which is equivalent to $-\frac{\alpha_{2-}}{\alpha_{2+}}<1$. By the intermediate value theorem and the strict monotonicity of $w$, there exists a unique $x_0\in\left(0,\frac{1}{\gamma_{2+}}\right)$ such that $w(x_0)=1$. Using \eqref{eqn:system at w_0} and $w(x_0)=1$, we obtain
		\begin{equation*}
			x_0=\frac{\alpha_{2+}+\alpha_{2-}}{\alpha_{2-}\gamma_{2-}+\alpha_{2+}\gamma_{2+}}=\frac{w_0^{\gamma_1}}{\gamma_1w_0^{\gamma_1-1}}=\frac{w_0}{\gamma_1}=\frac{N_3}{2\beta}.
		\end{equation*}
		This implies that $w_0\leq u_1$ and $\alpha_3\geq \frac{N_3}{2\beta}$ are equivalent, which completes the proof.
	\end{proof}
    
    Since $\frac{N_3}{2\beta}>0$, we have $\underline\alpha <\alpha_0$. From \eqref{eqn:N1N2N3N4} and the third inequality in \eqref{eqn:simplify alpha 2s}, we get $\frac{N_3}{2\beta}=\frac{w_0}{\gamma_1}<\frac{1}{\gamma_{2+}}$. Hence, $\alpha_{3-}$ must now satisfy the following inequalities:
    \begin{equation}\label{eqn:alpha 2nd inequality}
        \alpha_0\leq \alpha_{3-} <\overline \alpha.
    \end{equation}

    Our next agenda is to determine the signs of $\alpha_{3\pm}$, both of which have yet to be solved. First, we note that a sufficient condition such that $g$ is increasing, particularly in the region $\{u_1<x<u_2\}$, is $\alpha_{3-}\le 0\le\alpha_{3+}$. The following lemma proves that $\alpha_{3-}<0$.
	\begin{lemma}\label{lemma:alpha3-<0}
		$\alpha_{3-}<0$.
	\end{lemma}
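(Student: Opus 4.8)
The plan is to exploit the bound $\alpha_{3-}<\overline{\alpha}$ already recorded in \eqref{eqn:alpha 2nd inequality}: it suffices to show that the upper bound itself is nonpositive, i.e.\ $\overline{\alpha}\le 0$, since then $\alpha_{3-}<\overline{\alpha}\le 0$ yields the claim at once. So I would reduce the entire lemma to a sign statement about the constant $\overline{\alpha}$ defined in \eqref{eqn:alpha 1st ineq}.

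First I would record the signs of the quantities entering $\overline{\alpha}$. From \eqref{eqn:gamma234} with $N_4>0$ and $\beta>0$ one has $\gamma_{3+}\gamma_{3-}=-2\beta/N_4<0$, hence $\gamma_{3+}>0>\gamma_{3-}$ and $\gamma_{3+}-\gamma_{3-}>0$; together with $1-a\ge 1/2>0$ this makes the prefactor $\tfrac{(1-a)\gamma_{3+}}{\gamma_{3+}-\gamma_{3-}}$ in $\overline{\alpha}$ strictly positive. Consequently $\overline{\alpha}\le 0$ is equivalent to
\[
\frac{1}{\gamma_{2+}}\le\frac{1}{\gamma_{3+}}+\frac{\overline{c}_2}{\beta},
\]
and the task becomes proving this single inequality.

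Next I would make the two reciprocals explicit by rationalizing. Using $\gamma_{2+}$ and $\gamma_{3+}$ from \eqref{eqn:gamma234} and clearing the surd in the denominator gives
\[
\frac{1}{\gamma_{2+}}=\frac{N_3+\sqrt{N_3^2+2\beta N_4}}{2\beta},\qquad
\frac{1}{\gamma_{3+}}=\frac{(N_3-\overline{c}_2)+\sqrt{(N_3-\overline{c}_2)^2+2\beta N_4}}{2\beta}.
\]
Substituting these and cancelling $2\beta$, the displayed inequality collapses to $A-B\le\overline{c}_2$, where $A:=\sqrt{N_3^2+2\beta N_4}$ and $B:=\sqrt{(N_3-\overline{c}_2)^2+2\beta N_4}$. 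I would close this with the difference-of-squares trick: $A^2-B^2=\overline{c}_2(2N_3-\overline{c}_2)$, so $A-B=\overline{c}_2(2N_3-\overline{c}_2)/(A+B)$, and for $\overline{c}_2>0$ the claim is equivalent to $2N_3-\overline{c}_2\le A+B$ (trivially true when $\overline{c}_2\ge 2N_3$). Since $N_3>0$ and $N_4>0$ give $A\ge N_3$ and $B\ge|N_3-\overline{c}_2|$, in every case $A+B\ge N_3+|N_3-\overline{c}_2|\ge 2N_3-\overline{c}_2$, which finishes the argument; in fact $A>N_3$ makes the inequality strict, so $\overline{\alpha}<0$ whenever $\overline{c}_2>0$.

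The step carrying the real content is the equivalence $\overline{\alpha}\le 0\Longleftrightarrow \tfrac{1}{\gamma_{2+}}\le\tfrac{1}{\gamma_{3+}}+\tfrac{\overline{c}_2}{\beta}$ together with the verification of that inequality; everything else is bookkeeping about signs. The only mild subtlety is that $\gamma_{2+}$ and $\gamma_{3+}$ are the positive roots produced by shifting the drift coefficient from $N_3$ to $N_3-\overline{c}_2$, so one must keep track of which radical belongs to which reciprocal and confirm that this shift moves $1/\gamma_{\cdot+}$ by at most $\overline{c}_2/\beta$; the difference-of-squares computation is precisely what quantifies this, turning a potentially delicate monotonicity comparison into a one-line bound.
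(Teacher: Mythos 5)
Your proposal is correct, and the key reduction is the same as the paper's: both arguments observe that, by \eqref{eqn:alpha 1st ineq} and the positivity of the prefactor $\tfrac{(1-a)\gamma_{3+}}{\gamma_{3+}-\gamma_{3-}}$, it suffices to prove $\tfrac{\overline{c}_2}{\beta}\ge \tfrac{1}{\gamma_{2+}}-\tfrac{1}{\gamma_{3+}}$, i.e.\ $\overline{\alpha}\le 0$. Where you differ is in how that inequality is verified. The paper argues by contradiction: it assumes the reverse inequality, rewrites it as a relation between two radicals (their \eqref{eqn:r1}), checks the sign of each side, and squares to arrive at $\overline{c}_2^2 k<0$. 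You instead rationalize the reciprocals to the closed forms $\tfrac{1}{\gamma_{2+}}=\tfrac{N_3+A}{2\beta}$ and $\tfrac{1}{\gamma_{3+}}=\tfrac{N_3-\overline{c}_2+B}{2\beta}$ with $A=\sqrt{N_3^2+2\beta N_4}$, $B=\sqrt{(N_3-\overline{c}_2)^2+2\beta N_4}$, reduce the claim to $A-B\le\overline{c}_2$, and close it with $A-B=\overline{c}_2(2N_3-\overline{c}_2)/(A+B)$ together with $A>N_3$ and $B\ge|N_3-\overline{c}_2|$. This is a direct, more transparent route that avoids the sign case-analysis needed to justify squaring in the paper's version, and as a bonus it shows the inequality is strict (so $\overline{\alpha}<0$ for $\overline{c}_2>0$); the paper's version buys nothing extra here. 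All the intermediate computations you use (the product $\gamma_{3+}\gamma_{3-}=-2\beta/N_4<0$, the rationalized reciprocals, and $A^2-B^2=\overline{c}_2(2N_3-\overline{c}_2)$) check out against the definitions in \eqref{eqn:gamma234} and \eqref{eqn:N1N2N3N4}.
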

	\begin{proof}
		From \eqref{eqn:alpha 1st ineq}, it suffices to show that
		\begin{equation*}
			\frac{\overline{c}_2}{\beta}\geq  \frac{1}{\gamma_{2+}}-\frac{1}{\gamma_{3+}}.
		\end{equation*}
		Write $k:=2\beta N_4>0$. We use proof by contradiction. Suppose $\frac{\overline{c}_2}{\beta}< \frac{1}{\gamma_{2+}}-\frac{1}{\gamma_{3+}}$. This is equivalent to
		\begin{equation}\label{eqn:r1}
			\begin{aligned}
				&2\overline c_2\sqrt{(N_3-\overline c_2)^2+k}\cdot\sqrt{N_3^2+k}-\overline c_2\left(k-2N_3(N_3-\overline{c}_2)\right)\\
				&< (2N_3\overline c_2+k)\sqrt{(N_3-\overline c_2)^2+k}+(2\overline c_2(N_3-\overline c_2)-k)\sqrt{N_3^2+k}.
			\end{aligned}			
		\end{equation}
		It can be shown that the left-hand side of \eqref{eqn:r1} is always positive. If the right-hand side of \eqref{eqn:r1} is negative, then it leads to a contradiction. Otherwise, we can square both sides and obtain
		\begin{equation*}
			\overline{c}_2^2k< 0,
		\end{equation*}
		which is also a contradiction. This completes the proof.
	\end{proof}
    We must now ensure that $\alpha_{3+}\geq 0$. Using \eqref{eqn:g'(u1)=1-a}, we must have the following:
	\begin{equation*}
		\alpha_{3-}\geq \frac{1-a}{\gamma_{3-}}.
	\end{equation*}
    We now check if the above inequality updates the lower bound in \eqref{eqn:alpha 2nd inequality}; that is, we determine whether $\frac{1-a}{\gamma_{3-}}$ is a larger lower bound than $\alpha_0$. The following lemma gives a necessary and sufficient condition such that $\frac{1-a}{\gamma_{3-}}$ tightens the bounds for $\alpha_{3-}$.
    \begin{lemma}\label{lemma: LB for alpha3}
		$\alpha_0>\frac{1-a}{\gamma_{3-}}$ if and only if $\overline{c}_2<\frac{N_3N_2}{2N_1}=\frac{\beta w_1}{\gamma_1(1-\gamma_1)}$.
	\end{lemma}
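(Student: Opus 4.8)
The plan is to show that the target inequality $\alpha_0 > \frac{1-a}{\gamma_{3-}}$ is equivalent, through a chain of \emph{reversible} algebraic manipulations, to $\overline c_2 < \frac{N_3 N_2}{2N_1}$; since every step is an equivalence, the ``if and only if'' follows at once.

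First I would record the sign facts that drive the whole argument. From \eqref{eqn:gamma234}, $\gamma_{3+}$ and $\gamma_{3-}$ are the roots of $\frac12 N_4\gamma^2+(N_3-\overline c_2)\gamma-\beta=0$, so their product is $-2\beta/N_4<0$; hence $\gamma_{3+}>0>\gamma_{3-}$ and $\gamma_{3+}-\gamma_{3-}>0$. Because $a\le 1/2$ gives $1-a>0$, I can divide the target inequality by $1-a$ and multiply through by the positive quantity $\gamma_{3+}-\gamma_{3-}$. Substituting the definition of $\alpha_0$ in \eqref{eqn:alpha_0}, cancelling the common $\tfrac{\gamma_{3+}}{\gamma_{3-}}-1$ terms, and dividing by $\gamma_{3+}>0$, the inequality collapses to the compact equivalent form
\begin{equation*}
\frac{N_3-2\overline c_2}{2\beta}>\frac{1}{\gamma_{3-}}.
\end{equation*}

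Next I would obtain a closed form for $1/\gamma_{3-}$ by rationalizing. Writing $\gamma_{3-}=\frac{-(N_3-\overline c_2)-\sqrt{(N_3-\overline c_2)^2+2\beta N_4}}{N_4}$ and multiplying by the conjugate (the resulting denominator being $-2\beta N_4$) yields
\begin{equation*}
\frac{1}{\gamma_{3-}}=\frac{(N_3-\overline c_2)-\sqrt{(N_3-\overline c_2)^2+2\beta N_4}}{2\beta}.
\end{equation*}
Substituting this and clearing the positive factor $2\beta$ reduces the inequality to $-\overline c_2>-\sqrt{(N_3-\overline c_2)^2+2\beta N_4}$, i.e.\ $\overline c_2<\sqrt{(N_3-\overline c_2)^2+2\beta N_4}$. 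Both sides are positive (since $\overline c_2>0$), so squaring is an equivalence; after cancelling $\overline c_2^2$ this becomes $2N_3\overline c_2<N_3^2+2\beta N_4$, that is $\overline c_2<\frac{N_3}{2}+\frac{\beta N_4}{N_3}$ (using $N_3>0$).

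Finally I would identify this right-hand side with $\frac{N_3N_2}{2N_1}$. Using the identity $N_4=\frac{N_3^2}{2\beta}\left(\frac{N_2}{N_1}-1\right)$ derived just before \eqref{eqn:simplify alpha 2s}, a one-line computation gives $\frac{N_3}{2}+\frac{\beta N_4}{N_3}=\frac{N_3}{2}\cdot\frac{N_2}{N_1}=\frac{N_3N_2}{2N_1}$, and the equality $\frac{N_3N_2}{2N_1}=\frac{\beta w_1}{\gamma_1(1-\gamma_1)}$ follows directly from the definitions of $w_1$ in \eqref{eqn:w1 and w2}, $\gamma_1$ in \eqref{eqn:gamma1}, and $N_3,N_4$ in \eqref{eqn:N1N2N3N4}. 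Chaining all the equivalences completes the proof. The argument is entirely elementary, so there is no genuine analytic obstacle; the only delicate points are bookkeeping ones, namely keeping track that $\gamma_{3-}<0$ so that no inequality is inadvertently reversed, and checking that both sides are nonnegative before squaring.
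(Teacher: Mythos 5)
Your proof is correct, and it takes a genuinely cleaner route than the paper's. Both arguments reduce $\alpha_0>\frac{1-a}{\gamma_{3-}}$ to a comparison between a linear expression in $\overline c_2$ and the square root $\sqrt{(N_3-\overline c_2)^2+2\beta N_4}$, but they diverge in how they handle the squaring step. The paper keeps $1/\gamma_{3-}$ unrationalized and lands on the inequality $(N_3-2\overline c_2)(N_3-\overline c_2)+N_3^2\bigl(\tfrac{N_2}{N_1}-1\bigr)>-(N_3-2\overline c_2)\sqrt{(N_3-\overline c_2)^2+N_3^2\bigl(\tfrac{N_2}{N_1}-1\bigr)}$, where neither side has a definite sign; it must then split into the cases $\overline c_2<N_3/2$ and $\overline c_2\geq N_3/2$ and verify positivity of the auxiliary quadratic $h(x)=(N_3-2x)(N_3-x)+N_3^2\bigl(\tfrac{N_2}{N_1}-1\bigr)$ on $\bigl[\tfrac{N_3}{2},\tfrac{N_3N_2}{2N_1}\bigr]$ (itself requiring a sub-case analysis on $N_2/N_1\gtrless 3/2$) before squaring is legitimate. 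You instead rationalize $1/\gamma_{3-}$ at the outset, which collapses the inequality to $\overline c_2<\sqrt{(N_3-\overline c_2)^2+2\beta N_4}$; here both sides are manifestly positive, so squaring is unconditionally an equivalence and the entire case analysis disappears. Your intermediate reductions check out (the product of the roots $\gamma_{3+}\gamma_{3-}=-2\beta/N_4<0$ gives the sign facts, the cancellation yielding $\tfrac{N_3-2\overline c_2}{2\beta}>\tfrac{1}{\gamma_{3-}}$ is exact, and the identities $N_4=\tfrac{N_3^2}{2\beta}\bigl(\tfrac{N_2}{N_1}-1\bigr)$ and $\tfrac{N_3N_2}{2N_1}=\tfrac{\beta w_1}{\gamma_1(1-\gamma_1)}$ are the ones the paper itself records). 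The only cosmetic quibble is that ``cancelling the common $\tfrac{\gamma_{3+}}{\gamma_{3-}}-1$ terms'' slightly misdescribes the algebra (it is the $-1$ terms that cancel after multiplying through by $\gamma_{3+}-\gamma_{3-}$ and then dividing by $\gamma_{3+}$), but the stated outcome is correct.
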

	\begin{proof}
		Suppose $\alpha_0>\frac{1-a}{\gamma_{3-}}$. This inequality is equivalent to
		\begin{equation}\label{eqn:rineq1}
			(N_3-2\overline c_2)(N_3-\overline{c}_2)+N_3^2\left(\frac{N_2}{N_1}-1\right)>-(N_3-2\overline c_2)\sqrt{(N_3-\overline c_2)^2+N_3^2\left(\frac{N_2}{N_1}-1\right)}.
		\end{equation}
		It can easily be seen that the above inequality is always satisfied if $\overline{c}_2<\frac{N_3}{2}$. Suppose $\overline c_2\geq \frac{N_3}{2}$. Squaring both sides of \eqref{eqn:rineq1} yields
		\begin{equation*}
			2(N_3-2\overline c_2)(N_3-\overline{c}_2)+N_3^2\left(\frac{N_2}{N_1}-1\right)>(N_3-2\overline c_2)^2,
		\end{equation*}
		which is equivalent to
		\begin{equation*}
			\overline c_2<\frac{N_3N_2}{2N_1}.
		\end{equation*}
		However, we need to ensure that the left-hand side of \eqref{eqn:rineq1} is positive for $\frac{N_3}{2}\leq \overline c_2<\frac{N_3N_2}{2N_1}$. Define $h(x):=(N_3-2x)(N_3-x)+N_3^2\left(\frac{N_2}{N_1}-1\right)$. It follows that $h$ achieves its minimum at $x=\frac{3N_3}{4}$. Moreover, $h\left(\frac{N_3}{2}\right)=N_3^2\left(\frac{N_2}{N_1}-1\right)>0$, $h\left(\frac{N_3N_2}{2N_1}\right)=\frac{N_3^2N_2}{2N_1}\left(\frac{N_2}{N_1}-1\right)>0$, and $h\left(\frac{3N_3}{4}\right)=N_3^2\left(\frac{N_2}{N_1}-\frac{9}{8}\right)$. We also have $\frac{3N_3}{4}\geq \frac{N_3N_2}{2N_1}$ if and only if $\frac{N_2}{N_1}\leq\frac{3}{2}$. If $\frac{N_2}{N_1}>\frac{3}{2}>\frac{9}{8}$, then $h\left(\frac{3N_3}{4}\right)>0$, which implies that $h(x)>0$ for any $x$. If $\frac{N_2}{N_1}\leq\frac{3}{2}$, then $h(x)>0$ on $\left[\frac{N_3}{2},\frac{N_3N_2}{2N_1}\right]$. Thus, the left side of the inequality in \eqref{eqn:rineq1} is always positive for $\overline c_2\in\left[\frac{N_3}{2},\frac{N_3N_2}{2N_1}\right]$, which makes squaring both sides of \eqref{eqn:rineq1} valid. From \eqref{eqn:N1N2N3N4}, we have $\frac{N_3N_2}{2N_1}=\frac{\beta w_1}{\gamma_1(1-\gamma_1)}$. This completes the proof.
	\end{proof}
        The following lemma proves that $\frac{1-a}{\gamma_{3-}}$ does not exceed the upper bound $\overline\alpha$.    
        \begin{lemma}\label{lemma for alpha3 v2}
		$\frac{1-a}{\gamma_{3-}}<\overline{\alpha}$.
	\end{lemma}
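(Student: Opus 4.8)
The plan is to reduce the claimed inequality $\frac{1-a}{\gamma_{3-}}<\overline{\alpha}$, with $\overline{\alpha}$ as defined in \eqref{eqn:alpha 1st ineq}, to an elementary inequality among the $N_i$ and $\overline{c}_2$ that can be settled by bounding two square roots from below. Throughout I would rely on the sign pattern that follows at once from \eqref{eqn:gamma234}: $\gamma_{3-}<0<\gamma_{3+}$ and $\gamma_{2-}<0<\gamma_{2+}$, hence $\gamma_{3+}-\gamma_{3-}>0$, together with $1-a\geq\frac{1}{2}>0$ (since $a\leq\frac12$). Dividing the target inequality by $1-a>0$, then multiplying through by $\gamma_{3+}-\gamma_{3-}>0$, and finally dividing by $\gamma_{3+}>0$ — each step preserving the direction of the inequality because the multiplier is strictly positive — I expect the claim to collapse to the equivalent statement
\[
\frac{1}{\gamma_{3-}} < \frac{1}{\gamma_{2+}} - \frac{\overline{c}_2}{\beta}.
\]

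Next I would put both reciprocals into closed form. Writing $k:=2\beta N_4>0$ as in the proof of Lemma \ref{lemma:alpha3-<0}, rationalizing $\gamma_{2+}=\frac{-N_3+\sqrt{N_3^2+k}}{N_4}$ yields $\frac{1}{\gamma_{2+}}=\frac{N_3+\sqrt{N_3^2+k}}{2\beta}$, and rationalizing $\gamma_{3-}=\frac{-(N_3-\overline{c}_2)-\sqrt{(N_3-\overline{c}_2)^2+k}}{N_4}$ yields $\frac{1}{\gamma_{3-}}=\frac{(N_3-\overline{c}_2)-\sqrt{(N_3-\overline{c}_2)^2+k}}{2\beta}$. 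Substituting these and clearing the positive common factor $2\beta$ turns the displayed inequality, after collecting the $\overline{c}_2$ terms, into
\[
\overline{c}_2 < \sqrt{N_3^2+k} + \sqrt{(N_3-\overline{c}_2)^2+k}.
\]

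Finally I would close with a crude two-sided bound on the radicals. Since $k>0$ and $N_3>0$ by \eqref{eqn:N1N2N3N4}, we have $\sqrt{N_3^2+k}>N_3$, and since $k>0$ we also have $\sqrt{(N_3-\overline{c}_2)^2+k}>|N_3-\overline{c}_2|\geq \overline{c}_2-N_3$; adding these two bounds gives exactly the required strict inequality. The computation is short, so the only genuine obstacle is bookkeeping: I must verify that every multiplication and division in the first paragraph is by a strictly positive quantity (so that no inequality flips) and that the two rationalizations are valid, both of which rest entirely on the sign pattern of $\gamma_{2\pm},\gamma_{3\pm}$ recorded above. I would also note in passing that this argument uses none of the scenario-specific hypotheses of Theorem \ref{prop:w0<u1<u2} beyond $N_3,N_4>0$ and $\overline{c}_2\geq 0$, so the bound holds unconditionally in the present setting.
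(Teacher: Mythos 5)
Your proposal is correct, and it reaches the paper's own pivot point by the same reduction: after clearing the positive factors $1-a$, $\gamma_{3+}-\gamma_{3-}$, and $\gamma_{3+}$, the claim becomes $\frac{\overline{c}_2}{\beta}<\frac{1}{\gamma_{2+}}-\frac{1}{\gamma_{3-}}$, which is exactly the paper's inequality \eqref{eqn:c2/b<UB}. Where you diverge is in how this reduced inequality is verified. The paper first splits off the case $\overline{c}_2<\frac{N_3N_2}{2N_1}$ (disposing of it via Lemma \ref{lemma: LB for alpha3} and $\alpha_0<\overline{\alpha}$), and in the remaining case invokes the bound $\sqrt{A^2+B}-A<\frac{B}{2A}$ together with a further subcase analysis on the sign of $N_3-\overline{c}_2$. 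You instead rationalize $\frac{1}{\gamma_{2+}}$ and $\frac{1}{\gamma_{3-}}$ into closed form — both computations check out, since $\gamma_{2+}\gamma_{2-}=-\frac{2\beta}{N_4}$ and likewise for $\gamma_{3\pm}$ — and arrive at the single unconditional inequality $\overline{c}_2<\sqrt{N_3^2+k}+\sqrt{(N_3-\overline{c}_2)^2+k}$, which follows from $\sqrt{N_3^2+k}>N_3$ and $\sqrt{(N_3-\overline{c}_2)^2+k}>|N_3-\overline{c}_2|\geq\overline{c}_2-N_3$. Your route buys a cleaner and shorter argument: it eliminates both layers of case analysis, removes the dependence on Lemma \ref{lemma: LB for alpha3}, and makes transparent that the lemma needs nothing beyond $N_3,N_4>0$ and $\overline{c}_2\geq 0$ — an observation the paper's proof obscures. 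The paper's approach, by contrast, recycles machinery ($\alpha_0$, the elementary bound $\sqrt{A^2+B}-A<\frac{B}{2A}$) already set up for the neighboring lemmas, which is presumably why it was organized that way.
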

	\begin{proof}
		If $\overline c_2< \frac{N_3N_2}{2N_1}$, the result follows immediately from Lemma \ref{lemma: LB for alpha3} and the fact that $\alpha_0<\overline{\alpha}$. Next, we consider the case of $\overline c_2\geq  \frac{N_3N_2}{2N_1}$, under which $\frac{1-a}{\gamma_{3-}}>\alpha_0$ by Lemma \ref{lemma: LB for alpha3}. 
       We want to prove that $\frac{1-a}{\gamma_{3-}}<\overline{\alpha}$. It can be shown that $\frac{1-a}{\gamma_{3-}}<\overline{\alpha}$ is equivalent to 
		\begin{equation}\label{eqn:c2/b<UB}
			\frac{\overline{c}_2}{\beta}<  \frac{1}{\gamma_{2+}}-\frac{1}{\gamma_{3-}}.
		\end{equation}
		We will prove \eqref{eqn:c2/b<UB} instead. From the elementary inequality
		\begin{equation*}
			\sqrt{A^2+B}-A<\frac{B}{2A},\quad A,B>0,
		\end{equation*}
		we obtain the following:
		\begin{equation*}
			\gamma_{2+}<\frac{\beta}{N_3}\Leftrightarrow\frac{1}{\gamma_{2+}}>\frac{N_3}{\beta}.
		\end{equation*}
		If $N_3-\overline{c}_2\leq 0$, then
		\begin{equation*}
			-\gamma_{3-}<\frac{\beta}{\overline{c}_2-N_3}\Leftrightarrow-\frac{1}{\gamma_{3-}}>\frac{\overline c_2-N_3}{\beta},
		\end{equation*}
		and \eqref{eqn:c2/b<UB} follows. If $N_3-\overline{c}_2> 0$, then
		\begin{equation*}
			\gamma_{2+}<\frac{\beta}{N_3}<\frac{\beta}{\overline{c}_2}\Leftrightarrow\frac{\overline{c}_2}{\beta}<\frac{1}{\gamma_{2+}},
		\end{equation*}
		and \eqref{eqn:c2/b<UB} follows since $\gamma_{3-}<0$. The proof is complete.
	\end{proof}

	Lemmas \ref{lemma: LB for alpha3} and \ref{lemma for alpha3 v2} imply that $\alpha_{3-}$ must satisfy the following inequalities so that $\alpha_{3+}> 0$:
	\begin{equation}\label{eqn:alpha3- bounds}
		\alpha_{LB}< \alpha_{3-}<\overline{\alpha},
	\end{equation}
    where $\alpha_{LB}$ is defined in \eqref{eqn: alphaLB & UB}.
    \begin{remark}
        It also follows that $g$ is strictly increasing in $\{u_1<x<u_2\}$. It is easy to see that $g$ is strictly increasing in the other regions. Hence, due to the continuity of the first derivative, $g$ is strictly increasing for $x>0$.

        Moreover, it is easy to show that $g$ is strictly concave in the regions $\{x<w_0\}$ and $\{x > u_2\}$. Since $g'''(x)>0$ for $\{w_0<x<u_1\}$ and $\{u_1<x<u_2\}$, the strict concavity of $g$ on $[w_0,u_2]$ follows from the continuity of the second derivative.
    \end{remark}
    We have already established that $\alpha_{3-}<0<\alpha_{3+}$. Combining this with the fact that $\gamma_{3-}<0<\gamma_{3+}$ and $\gamma_{3-}<\gamma_{4-}<0$ implies that $u_2$ given in \eqref{eqn:u2} is well-defined. However, it does not guarantee that $u_1\leq u_2$. The following lemma gives a necessary and sufficient condition for $u_1\leq u_2$ to hold.
	
	\begin{lemma}\label{lemma:u_1<u_2}
		$u_1\leq u_2$ if and only if
		\begin{equation*}
			\alpha_{3-}\leq \alpha_{UB},
		\end{equation*}
            where $\alpha_{UB}$ is defined in \eqref{eqn: alphaLB & UB}.
	\end{lemma}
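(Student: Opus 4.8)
The plan is to start directly from the closed-form expression for $u_2$ in \eqref{eqn:u2}. Since $\gamma_{3+}-\gamma_{3-}>0$, the inequality $u_1\leq u_2$ holds if and only if the logarithmic term there is nonnegative, i.e.,
\[
\frac{\alpha_{3-}\gamma_{3-}(\gamma_{4-}-\gamma_{3-})}{\alpha_{3+}\gamma_{3+}(\gamma_{3+}-\gamma_{4-})}\geq 1.
\]
Before manipulating this, I would record the sign facts already established: $\gamma_{3-}<0<\gamma_{3+}$, $\gamma_{3-}<\gamma_{4-}<0$ (so $\gamma_{4-}-\gamma_{3-}>0$ and $\gamma_{3+}-\gamma_{4-}>0$), and $\alpha_{3-}<0<\alpha_{3+}$ from Lemma \ref{lemma:alpha3-<0} together with \eqref{eqn:alpha3- bounds}. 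With these, both the numerator and the denominator of the fraction are strictly positive, so the fraction is well-defined and positive, and cross-multiplying by the positive denominator preserves the direction of the inequality.

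Next I would reduce the displayed inequality to
\[
\alpha_{3-}\gamma_{3-}(\gamma_{4-}-\gamma_{3-})\geq \alpha_{3+}\gamma_{3+}(\gamma_{3+}-\gamma_{4-}),
\]
and eliminate $\alpha_{3+}$ by inserting the relation $\gamma_{3+}\alpha_{3+}=1-a-\gamma_{3-}\alpha_{3-}$ from \eqref{eqn:g'(u1)=1-a}. Expanding both sides and cancelling the common term $\alpha_{3-}\gamma_{3-}\gamma_{4-}$, the inequality collapses (after collecting the $\gamma_{3-}\alpha_{3-}$ terms) to
\[
\alpha_{3-}\gamma_{3-}(\gamma_{3+}-\gamma_{3-})\geq (1-a)(\gamma_{3+}-\gamma_{4-}).
\]

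To finish, I would observe that $\gamma_{3-}(\gamma_{3+}-\gamma_{3-})<0$, being the product of a negative and a positive factor, so dividing both sides by this quantity reverses the inequality and yields exactly
\[
\alpha_{3-}\leq \frac{(1-a)(\gamma_{3+}-\gamma_{4-})}{\gamma_{3-}(\gamma_{3+}-\gamma_{3-})}=\alpha_{UB},
\]
which is the claimed characterization from \eqref{eqn: alphaLB & UB}. Because every step is an equivalence (cross-multiplication by a positive quantity, substitution of an identity, and division by a nonzero quantity with a tracked sign), both directions of the ``if and only if'' follow simultaneously. The only place demanding genuine care—and the single spot where a sign slip would invalidate the argument—is keeping track of which factors are negative when cross-multiplying and, above all, when dividing through by $\gamma_{3-}(\gamma_{3+}-\gamma_{3-})$ to flip the inequality; everything else is routine algebraic simplification.
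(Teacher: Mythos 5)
Your proposal is correct and follows essentially the same route as the paper's (much terser) proof: reduce $u_1\leq u_2$ to the condition that the argument of the logarithm in \eqref{eqn:u2} is at least $1$, then substitute $\gamma_{3+}\alpha_{3+}=1-a-\gamma_{3-}\alpha_{3-}$ from \eqref{eqn:g'(u1)=1-a} and simplify. Your sign bookkeeping (positivity of the denominator via $\alpha_{3-}<0<\alpha_{3+}$ and the ordering $\gamma_{3-}<\gamma_{4-}<0<\gamma_{3+}$, and the flip when dividing by $\gamma_{3-}(\gamma_{3+}-\gamma_{3-})<0$) checks out, so the proof is a complete, correct expansion of the paper's argument.
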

	\begin{proof}
		$u_1\leq u_2$ is equivalent to $\frac{\alpha_{3-}\gamma_{3-}(\gamma_{4-}-\gamma_{3-})}{\alpha_{3+}\gamma_{3+}(\gamma_{3+}-\gamma_{4-})}\geq 1$. Using \eqref{eqn:g'(u1)=1-a} yields the desired inequality.
	\end{proof}
    The following lemma proves that $\alpha_{UB}$ tightens the bounds of $\alpha_{3-}$ in \eqref{eqn:alpha3- bounds}.
	\begin{lemma}\label{lemma:UB}
		$\alpha_{UB}<\overline{\alpha}$.
	\end{lemma}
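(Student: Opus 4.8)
The plan is to reduce the claimed inequality to a trivially true comparison between two characteristic roots of opposite sign, using nothing more than Vieta's relations. Both $\alpha_{UB}$ in \eqref{eqn: alphaLB & UB} and $\overline{\alpha}$ in \eqref{eqn:alpha 1st ineq} carry the common factor $\frac{1-a}{\gamma_{3+}-\gamma_{3-}}$, which is strictly positive since $a\le 1/2$ and $\gamma_{3-}<0<\gamma_{3+}$. Cancelling it, the inequality $\alpha_{UB}<\overline{\alpha}$ becomes
\begin{equation*}
\frac{\gamma_{3+}-\gamma_{4-}}{\gamma_{3-}}<\gamma_{3+}\left(\frac{1}{\gamma_{2+}}-\frac{1}{\gamma_{3+}}-\frac{\overline c_2}{\beta}\right).
\end{equation*}
Multiplying through by $\gamma_{3-}<0$, which reverses the inequality, turns this into
\begin{equation*}
\gamma_{3+}-\gamma_{4-}>\gamma_{3-}\gamma_{3+}\left(\frac{1}{\gamma_{2+}}-\frac{1}{\gamma_{3+}}-\frac{\overline c_2}{\beta}\right).
\end{equation*}

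The key observation is that $\gamma_{2\pm}$ and $\gamma_{3\pm}$ are the roots of the characteristic equations $\tfrac12 N_4 r^2+N_3 r-\beta=0$ and $\tfrac12 N_4 r^2+(N_3-\overline c_2)r-\beta=0$ of the ODEs governing $g$ on the regions $\{w_0<x<u_1\}$ and $\{u_1<x<u_2\}$, respectively. Vieta's formulas then give the product identities $\gamma_{2+}\gamma_{2-}=\gamma_{3+}\gamma_{3-}=-\frac{2\beta}{N_4}$ together with the sum identities $\gamma_{2+}+\gamma_{2-}=-\frac{2N_3}{N_4}$ and $\gamma_{3+}+\gamma_{3-}=-\frac{2(N_3-\overline c_2)}{N_4}$. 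From the product identities I obtain $\frac{1}{\gamma_{2+}}=-\frac{N_4\gamma_{2-}}{2\beta}$, $\frac{1}{\gamma_{3+}}=-\frac{N_4\gamma_{3-}}{2\beta}$, and $\gamma_{3-}\gamma_{3+}=-\frac{2\beta}{N_4}$; substituting these into the right-hand side collapses it to $(\gamma_{2-}-\gamma_{3-})+\frac{2\overline c_2}{N_4}$. Finally, subtracting the two sum identities yields $\frac{2\overline c_2}{N_4}=(\gamma_{3+}+\gamma_{3-})-(\gamma_{2+}+\gamma_{2-})$, and after cancellation the inequality reduces to
\begin{equation*}
\gamma_{3+}-\gamma_{4-}>\gamma_{3+}-\gamma_{2+},\qquad\text{i.e.}\qquad \gamma_{2+}>\gamma_{4-}.
\end{equation*}

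This last inequality holds trivially: $\gamma_{2+}=\frac{-N_3+\sqrt{N_3^2+2\beta N_4}}{N_4}>0$ because $N_3>0$, while $\gamma_{4-}<0$ directly from its definition in \eqref{eqn:gamma234}. Hence $\alpha_{UB}<\overline{\alpha}$. I do not expect a genuine obstacle here beyond careful bookkeeping; the only delicate points are keeping track of the sign of $\gamma_{3-}$ when clearing denominators (so that the inequality flips in the right direction) and applying the product and sum Vieta relations consistently across the two adjacent regions. Together with Lemmas \ref{lemma: LB for alpha3} and \ref{lemma for alpha3 v2}, this shows that $\alpha_{UB}$ genuinely tightens the upper end of the admissible interval \eqref{eqn:alpha3- bounds} for $\alpha_{3-}$ from $\overline{\alpha}$ down to $\alpha_{UB}$, so that the relevant range becomes $\alpha_{LB}<\alpha_{3-}\le\alpha_{UB}$.
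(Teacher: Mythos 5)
Your proof is correct, and it takes a genuinely different route from the paper's. Both arguments begin with the same reduction: cancelling the positive factor $\frac{1-a}{\gamma_{3+}-\gamma_{3-}}$ and clearing the denominator $\gamma_{3-}<0$ (with the sign flip) turns $\alpha_{UB}<\overline{\alpha}$ into
\begin{equation*}
-(\gamma_{3+}+\gamma_{3-})+\gamma_{4-}-\gamma_{3+}\gamma_{3-}\frac{\overline c_2}{\beta}+\frac{\gamma_{3+}\gamma_{3-}}{\gamma_{2+}}<0,
\end{equation*}
which is precisely the display the paper starts from. From there the paper substitutes the explicit radical formulas in \eqref{eqn:gamma234}, rearranges into an inequality between combinations of square roots, and squares twice, with a case split on the sign of $\overline c_1+\overline c_2-\frac{N_3N_2}{N_1}$ to justify the second squaring. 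You instead exploit the fact that $\gamma_{2\pm}$, $\gamma_{3\pm}$, $\gamma_{4-}$ are roots of the characteristic polynomials $\tfrac12 N_4 r^2+(N_3-\cdot)\,r-\beta=0$: the common product $\gamma_{2+}\gamma_{2-}=\gamma_{3+}\gamma_{3-}=-\frac{2\beta}{N_4}$ turns the reciprocals $\frac{1}{\gamma_{2+}}$ and $\frac{1}{\gamma_{3+}}$ into multiples of $\gamma_{2-}$ and $\gamma_{3-}$, and the difference of the sum relations supplies $\frac{2\overline c_2}{N_4}=(\gamma_{3+}+\gamma_{3-})-(\gamma_{2+}+\gamma_{2-})$, so the whole inequality telescopes to $\gamma_{2+}>\gamma_{4-}$, which is immediate from $\gamma_{2+}>0>\gamma_{4-}$. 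I checked each algebraic step and the sign of every factor you divide by; the bookkeeping is right. What your route buys is the elimination of both squaring steps (and hence of the need to verify nonnegativity of each side before squaring, which is where the paper's case analysis lives), and it makes the structural reason for the lemma transparent; the paper's route is more mechanical but requires no observation about the shared product of roots.
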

	\begin{proof}
		It is sufficient to prove the following:
		\begin{equation*}
			-(\gamma_{3+}+\gamma_{3-})+\gamma_{4-}-\gamma_{3+}\gamma_{3-}\frac{\overline c_2}{\beta}+\frac{\gamma_{3+}\gamma_{3-}}{\gamma_{2+}}<0,
		\end{equation*}
		which is equivalent to
		\begin{equation*}
			\begin{aligned}
				&N_3\sqrt{(N_3-\overline c_1-\overline c_2)^2+2\beta N_4}+(N_3+\overline c_1+\overline c_2)\sqrt{N_3^2+2\beta N_4}\\
				&<2\beta N_4+N_3(N_3+\overline{c}_1+\overline{c}_2)+\sqrt{N_3^2+2\beta N_4}\sqrt{(N_3-\overline c_1-\overline c_2)^2+2\beta N_4}.
			\end{aligned}
		\end{equation*}
		Squaring both sides and combining terms yields
		\begin{equation*}
			\sqrt{N_3^2+2\beta N_4}\sqrt{(N_3-\overline c_1-\overline c_2)^2+2\beta N_4}> -N_3(N_3-\overline c_1-\overline c_2)-2\beta N_4=N_3\left(\overline c_1+\overline c_2-\frac{N_3N_2}{N_1}\right),
		\end{equation*}
		which is always true if $\overline c_1+\overline c_2<\frac{N_3N_2}{N_1}$. Suppose $\overline c_1+\overline c_2\geq\frac{N_3N_2}{N_1}$. Squaring both sides of the above inequality yields $(\overline c_1+\overline c_2)^2>0$, which is also always true. The proof is complete.
	\end{proof}
    The following lemma gives a necessary and sufficient condition such that $\alpha_{LB}\leq \alpha_{UB}$.
	\begin{lemma}\label{lemma:LB<UB}
		$\alpha_{LB}\leq \alpha_{UB}$ if and only if
		\begin{equation*}
			\overline c_1+\overline c_2\geq \frac{N_3N_2}{2N_1}=\frac{\beta w_1}{\gamma_1(1-\gamma_1)}.
		\end{equation*}
	\end{lemma}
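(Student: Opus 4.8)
The plan is to recast the piecewise definition of $\alpha_{LB}$ in \eqref{eqn: alphaLB & UB} as a single maximum and then reduce the claimed equivalence to a clean inequality on the exponents $\gamma_{3\pm},\gamma_{4-}$. By Lemma \ref{lemma: LB for alpha3}, we have $\alpha_0 > \frac{1-a}{\gamma_{3-}}$ precisely when $\overline c_2 < \frac{N_3N_2}{2N_1}$, so the two indicator branches combine to give
\begin{equation*}
\alpha_{LB} = \max\left\{\alpha_0,\ \frac{1-a}{\gamma_{3-}}\right\}.
\end{equation*}
Hence $\alpha_{LB}\le\alpha_{UB}$ holds if and only if \emph{both} $\alpha_0\le\alpha_{UB}$ and $\frac{1-a}{\gamma_{3-}}\le\alpha_{UB}$ hold. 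I would prove that the second of these is always true and that the first is equivalent to the threshold condition; together these yield the lemma.

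For the auxiliary inequality $\frac{1-a}{\gamma_{3-}}\le\alpha_{UB}$, dividing both sides by the negative quantity $\frac{1-a}{\gamma_{3-}}$ (flipping the direction) and clearing the positive factor $\gamma_{3+}-\gamma_{3-}$ reduces it to $\gamma_{4-}\ge\gamma_{3-}$. This follows from monotonicity: writing $f(t):=N_4^{-1}\bigl(-(N_3-t)-\sqrt{(N_3-t)^2+2\beta N_4}\bigr)$, one has $\gamma_{3-}=f(\overline c_2)$ and $\gamma_{4-}=f(\overline c_1+\overline c_2)$ from \eqref{eqn:gamma234}, and a direct derivative check shows $f$ is increasing; since $\overline c_1+\overline c_2\ge\overline c_2$, the inequality holds unconditionally.

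The main work is the equivalence $\alpha_0\le\alpha_{UB}\Leftrightarrow \overline c_1+\overline c_2\ge\frac{N_3N_2}{2N_1}$. Substituting the definitions of $\alpha_0$ in \eqref{eqn:alpha_0} and $\alpha_{UB}$ in \eqref{eqn: alphaLB & UB}, I would clear the common positive factor $\frac{1-a}{\gamma_{3+}-\gamma_{3-}}$ and then multiply through by $\gamma_{3-}<0$. The crucial simplification is to invoke the Vieta relations for the roots $\gamma_{3\pm}$ of the characteristic equation of \eqref{eqn:eqn3hjb}, namely $\gamma_{3+}\gamma_{3-}=-\frac{2\beta}{N_4}$ and $\gamma_{3+}+\gamma_{3-}=-\frac{2(N_3-\overline c_2)}{N_4}$. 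After substitution the $\overline c_2$-dependence collapses, because $\gamma_{3+}+\gamma_{3-}+\frac{N_3-2\overline c_2}{N_4}=-\frac{N_3}{N_4}$, and the inequality reduces to the single condition $\gamma_{4-}\ge-\frac{N_3}{N_4}$.

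Finally, substituting the explicit form of $\gamma_{4-}$ turns $\gamma_{4-}\ge-\frac{N_3}{N_4}$ into $\sqrt{(N_3-\overline c_1-\overline c_2)^2+2\beta N_4}\le \overline c_1+\overline c_2$; since both sides are nonnegative, squaring is a genuine equivalence, and using the identity $2\beta N_4=N_3^2\bigl(\frac{N_2}{N_1}-1\bigr)$ established in the derivation of the solution, this is exactly $\overline c_1+\overline c_2\ge\frac{N_3^2+2\beta N_4}{2N_3}=\frac{N_3N_2}{2N_1}$. The main obstacle is precisely this last chain: keeping the sign bookkeeping straight through the two sign-flipping multiplications by $\frac{1-a}{\gamma_{3-}}$ and $\gamma_{3-}$, and justifying that the squaring step is an equivalence (it is, because $\overline c_1+\overline c_2\ge0$). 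Combining the two sub-inequalities then gives $\alpha_{LB}\le\alpha_{UB}\Leftrightarrow\overline c_1+\overline c_2\ge\frac{N_3N_2}{2N_1}$, as claimed.
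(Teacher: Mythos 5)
Your proposal is correct, and the engine of the argument is the same as the paper's: both reduce $\alpha_0\le\alpha_{UB}$ to the inequality $-(\gamma_{3+}+\gamma_{3-})+\gamma_{4-}-\gamma_{3+}\gamma_{3-}\left(\frac{\overline c_2}{\beta}-\frac{N_3}{2\beta}\right)\ge 0$, collapse the $\overline c_2$-dependence via the Vieta relations, and finish by squaring $\overline c_1+\overline c_2\ge\sqrt{(N_3-\overline c_1-\overline c_2)^2+2\beta N_4}$. The packaging differs in two useful ways. First, you avoid the paper's case split on $\overline c_2\gtrless\frac{N_3N_2}{2N_1}$ by writing $\alpha_{LB}=\max\bigl\{\alpha_0,\frac{1-a}{\gamma_{3-}}\bigr\}$ (legitimate, via Lemma \ref{lemma: LB for alpha3}) and checking the two sub-inequalities separately; note that in the paper's first case the threshold condition holds automatically because $\overline c_2\ge\frac{N_3N_2}{2N_1}$ forces $\overline c_1+\overline c_2\ge\frac{N_3N_2}{2N_1}$, which your decomposition reproduces since then $\alpha_0\le\frac{1-a}{\gamma_{3-}}\le\alpha_{UB}$. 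Second, you prove the unconditional inequality $\frac{1-a}{\gamma_{3-}}\le\alpha_{UB}$ explicitly by reducing it to $\gamma_{4-}\ge\gamma_{3-}$ and invoking monotonicity of $t\mapsto\frac{-(N_3-t)-\sqrt{(N_3-t)^2+2\beta N_4}}{N_4}$; the paper instead disposes of this step with a citation to the proof of Lemma \ref{lemma:u_1<u_2}, which does not obviously contain the needed fact, so your self-contained argument actually patches a weak spot in the paper's exposition. The sign bookkeeping (dividing by $\frac{1-a}{\gamma_{3-}}<0$, multiplying by $\gamma_{3-}<0$) and the justification of the squaring step (both sides positive since $\overline c_1+\overline c_2>0$) are handled correctly.
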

	\begin{proof}
		If $\overline c_2\geq \frac{N_3N_2}{2N_1}$, then, from the proof of Lemma \ref{lemma:u_1<u_2}, $\frac{1-a}{\gamma_{3-}}=\alpha_{LB}\leq \alpha_{UB}$. Moreover, $\overline c_2\geq \frac{N_3N_2}{2N_1}$ and $\overline c_1\geq 0$ imply that $\overline c_1+\overline c_2\geq\frac{N_3N_2}{2N_1}$. Suppose now that $\overline c_2<\frac{N_3N_2}{2N_1}$, which implies that $\alpha_{LB}=\alpha_0$. Then, $\alpha_{LB}\leq\alpha_{UB}$ is equivalent to
		\begin{equation*}
			-(\gamma_{3+}+\gamma_{3-})+\gamma_{4-}-\gamma_{3+}\gamma_{3-}\left(\frac{\overline{c}_2}{\beta}-\frac{N_3}{2\beta}\right)\geq 0,
		\end{equation*}
		which is also equivalent to
		\begin{equation*}
			\overline{c}_1+\overline{c}_2\geq\sqrt{(N_3-\overline c_1-\overline c_2)^2+2\beta N_4}.
		\end{equation*}
		Squaring both sides yields
		\begin{equation*}
			\overline{c}_1+\overline c_2 \geq \frac{N_3^2+2\beta N_4}{2N_3}=\frac{N_3N_2}{2N_1}=\frac{\beta w_1}{\gamma_1(1-\gamma_1)},
		\end{equation*}
		which proves the result.
	\end{proof}
    
    Using $\overline c_1+\overline c_2\geq \frac{N_3N_2}{2N_1}$ and Lemma \ref{lemma:LB<UB}, we have $\alpha_{LB}\leq \alpha_{UB}$. We now state the following result which gives a necessary and sufficient condition such that $w_0\leq u_1\leq u_2$.
	
	\begin{lemma}\label{lemma:w0<u1<u2}
		$w_0\leq u_1\leq u_2$ if and only if $\alpha_{3-}\in(\alpha_{LB},\alpha_{UB})$.
	\end{lemma}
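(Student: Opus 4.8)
The plan is to prove the lemma as a two-sided implication by assembling the characterizations of the two order relations $w_0 \le u_1$ and $u_1 \le u_2$ that have already been established, and reconciling the resulting constraints on $\alpha_{3-}$ with the positivity requirement $\alpha_{3+} > 0$ that makes the candidate $g$ a genuine, strictly increasing value function. The two key ingredients are Lemma \ref{lemma:w_0<u_1}, which gives $w_0 \le u_1 \iff \alpha_{3-} \ge \alpha_0$, and Lemma \ref{lemma:u_1<u_2}, which gives $u_1 \le u_2 \iff \alpha_{3-} \le \alpha_{UB}$. The remaining work is to show that, once the positivity of $\alpha_{3+}$ is imposed, the combined lower bound is exactly $\alpha_{LB}$ as defined in \eqref{eqn: alphaLB & UB}, and that the resulting interval is nonempty.

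For the ``if'' direction, I would take $\alpha_{3-} \in (\alpha_{LB}, \alpha_{UB})$ and verify each building block in turn. By the definition of $\alpha_{LB}$ in \eqref{eqn: alphaLB & UB} together with Lemma \ref{lemma: LB for alpha3} --- which identifies $\alpha_{LB} = \max\{\alpha_0,\, (1-a)/\gamma_{3-}\}$ according to the sign of $\overline{c}_2 - N_3 N_2/(2N_1)$ --- the hypothesis $\alpha_{3-} > \alpha_{LB}$ yields simultaneously $\alpha_{3-} \ge \alpha_0$ and $\alpha_{3-} > (1-a)/\gamma_{3-}$. The first inequality gives $w_0 \le u_1$ through Lemma \ref{lemma:w_0<u_1}; the second, via \eqref{eqn:g'(u1)=1-a}, forces $\alpha_{3+} > 0$, which together with $\alpha_{3-} < 0$ from Lemma \ref{lemma:alpha3-<0} ensures $g$ is strictly increasing so that $u_1,u_2$ are genuine switching points. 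The upper bound $\alpha_{3-} < \alpha_{UB}$ gives $u_1 \le u_2$ through Lemma \ref{lemma:u_1<u_2}, and since $\alpha_{UB} < \overline{\alpha}$ by Lemma \ref{lemma:UB}, we also stay inside the admissible window \eqref{eqn:alpha 1st ineq} on which $u_1$ is well-defined. Non-emptiness of $(\alpha_{LB}, \alpha_{UB})$ follows from Lemma \ref{lemma:LB<UB} under the standing hypothesis $\overline{c}_1 + \overline{c}_2 \ge N_3 N_2/(2N_1)$. For the ``only if'' direction I would run the same implications backwards: $w_0 \le u_1$ forces $\alpha_{3-} \ge \alpha_0$, $u_1 \le u_2$ forces $\alpha_{3-} \le \alpha_{UB}$, and requiring $g$ to be increasing forces $\alpha_{3+} > 0$, i.e. $\alpha_{3-} > (1-a)/\gamma_{3-}$; taking the larger of the two lower bounds reproduces $\alpha_{3-} > \alpha_{LB}$.

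The main obstacle is a bookkeeping one rather than a deep difficulty: the consolidation of the two competing lower bounds $\alpha_0$ and $(1-a)/\gamma_{3-}$ into the single quantity $\alpha_{LB}$, whose binding term depends on the sign of $\overline{c}_2 - N_3 N_2/(2N_1)$. This is precisely the content of Lemma \ref{lemma: LB for alpha3} and is what dictates the indicator-based case split in \eqref{eqn: alphaLB & UB}. A secondary point requiring care is the treatment of the endpoints: the order relations in Lemmas \ref{lemma:w_0<u_1} and \ref{lemma:u_1<u_2} are non-strict, so $\alpha_{3-} = \alpha_{LB}$ and $\alpha_{3-} = \alpha_{UB}$ correspond to the degenerate configurations $w_0 = u_1$ and $u_1 = u_2$ in which a region of $g$ collapses; excluding these degeneracies, together with the strict requirement $\alpha_{3+} > 0$, is what produces the open interval $(\alpha_{LB}, \alpha_{UB})$ in the statement.
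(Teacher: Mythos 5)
Your proposal is correct and follows essentially the same route as the paper, which proves this lemma simply as a direct consequence of Lemmas \ref{lemma:w_0<u_1}, \ref{lemma:u_1<u_2}, and \ref{lemma:LB<UB} (with Lemma \ref{lemma: LB for alpha3} implicitly governing which lower bound is binding in $\alpha_{LB}$). Your extra remarks on the strictness of the interval endpoints and on the role of $\alpha_{3+}>0$ make explicit some bookkeeping the paper glosses over, but they do not constitute a different argument.
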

	\begin{proof}
		This is a direct consequence of Lemmas \ref{lemma:w_0<u_1}, \ref{lemma:u_1<u_2} and \ref{lemma:LB<UB}.
	\end{proof}
    It remains to prove the existence of $\alpha_{3-}$.
    \subsubsection*{Solving for $\alpha_{3-}$.}
     Although we have combined the two equations in \eqref{eqn:system at u_2} to obtain \eqref{eqn:u2}, $\alpha_{3-}$ still remains to be determined. We do this now via the first equation in \eqref{eqn:system at u_2}, which ensures that the first derivative is continuous at $x=u_2$. This motivates the form of $\psi$ defined in \eqref{eqn:psi}. The following lemma gives a necessary and sufficient condition such that $\alpha_{3-}$ exists and is unique.
    \begin{lemma}\label{lemma:exist alpha3}
        $\alpha_{3-}$ is the unique solution to $\psi(z)=0$ on $(\alpha_{LB},\alpha_{UB})$ if and only if $\psi(\alpha_{LB})\leq 0$, where $\psi$ is defined in \eqref{eqn:psi}.
    \end{lemma}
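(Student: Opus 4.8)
The plan is to recognize that $\psi(z)=0$ is nothing but the first equation of the smooth-fit system \eqref{eqn:system at u_2} (continuity of $g'$ at $u_2$), recast as a single scalar equation in the unknown $z=\alpha_{3-}$. Indeed, substituting $\alpha_{3+}=\tfrac{1}{\gamma_{3+}}(1-a-\gamma_{3-}z)$ from \eqref{eqn:g'(u1)=1-a} together with $u_2-u_1=\zeta(z)$ from \eqref{eqn:u2} into the first equation of \eqref{eqn:system at u_2} reproduces exactly $\psi(z)=0$ with $\psi$ as in \eqref{eqn:psi}. Thus the lemma becomes a one-dimensional root-counting statement for $\psi$ on $(\alpha_{LB},\alpha_{UB})$, and the natural strategy is to show that $\psi$ is continuous and \emph{strictly monotone} there, compute the two endpoint values, and conclude by the intermediate value theorem.

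The observation that makes $\psi$ tractable is that $\zeta$ was engineered so that the two exponential terms of $\psi$ remain in a fixed ratio. Setting $P(z):=(1-a-\gamma_{3-}z)e^{\gamma_{3+}\zeta(z)}$ and $Q(z):=\gamma_{3-}z\,e^{\gamma_{3-}\zeta(z)}$, exponentiating the definition of $\zeta$ gives $e^{(\gamma_{3+}-\gamma_{3-})\zeta(z)}=\frac{\gamma_{3-}(\gamma_{4-}-\gamma_{3-})z}{(1-a-\gamma_{3-}z)(\gamma_{3+}-\gamma_{4-})}$, whence $P(z)/Q(z)=\frac{\gamma_{4-}-\gamma_{3-}}{\gamma_{3+}-\gamma_{4-}}$ is a positive constant on the whole interval. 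Therefore $\psi$ collapses to the affine form $\psi(z)=\frac{\gamma_{3+}-\gamma_{3-}}{\gamma_{3+}-\gamma_{4-}}\,Q(z)-a$, so that the monotonicity of $\psi$ is equivalent to that of the single function $Q$.

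The main step, and the principal obstacle, is to prove $\psi$ strictly increasing on $(\alpha_{LB},\alpha_{UB})$. First one checks well-definedness and continuity: on this interval $z<0$ (both $\alpha_{LB}$ and $\alpha_{UB}$ are negative) and $1-a-\gamma_{3-}z>0$ (since $z>\alpha_{LB}\ge\tfrac{1-a}{\gamma_{3-}}$), so the logarithmic argument defining $\zeta$ is positive. Differentiating $\ln Q$ after computing $\zeta'(z)=\frac{1-a}{(\gamma_{3+}-\gamma_{3-})\,z\,(1-a-\gamma_{3-}z)}$ yields $\frac{Q'(z)}{Q(z)}=\frac{(1-a)\gamma_{3+}-(\gamma_{3+}-\gamma_{3-})\gamma_{3-}z}{z(\gamma_{3+}-\gamma_{3-})(1-a-\gamma_{3-}z)}$. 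The denominator is negative throughout; the numerator is increasing in $z$ and vanishes at $z^{*}:=\frac{(1-a)\gamma_{3+}}{\gamma_{3-}(\gamma_{3+}-\gamma_{3-})}$, and the explicit comparison $z^{*}-\alpha_{UB}=\frac{(1-a)\gamma_{4-}}{\gamma_{3-}(\gamma_{3+}-\gamma_{3-})}>0$ shows $z^{*}>\alpha_{UB}$. Hence for every $z<\alpha_{UB}<z^{*}$ the numerator is negative, so $Q'/Q>0$; since $Q>0$, both $Q$ and $\psi$ are strictly increasing. (The constant-ratio reduction is what turns an otherwise delicate logarithmic-derivative estimate into this clean sign analysis.)

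It remains to read off the endpoints and assemble the equivalence. At $z=\alpha_{UB}$ the logarithmic argument equals $1$, so $\zeta(\alpha_{UB})=0$ and $\psi(\alpha_{UB})=(1-a-\gamma_{3-}\alpha_{UB})+\gamma_{3-}\alpha_{UB}-a=1-2a\ge 0$ by $a\le\tfrac12$. At the left endpoint: when $\alpha_{LB}=\tfrac{1-a}{\gamma_{3-}}$ the identity $P=\tfrac{\gamma_{4-}-\gamma_{3-}}{\gamma_{3+}-\gamma_{4-}}Q$ forces $Q\to 0$ and hence $\lim_{z\downarrow\alpha_{LB}}\psi(z)=-a<0$, while when $\alpha_{LB}=\alpha_0$ one has $\psi(\alpha_{LB})=\psi(\alpha_0)$, the quantity appearing in Condition $(ii)$ of Theorem \ref{prop:w0<u1<u2}. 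Since $\psi$ is continuous and strictly increasing with $\psi(\alpha_{UB})\ge 0$, a necessarily unique zero lies in $(\alpha_{LB},\alpha_{UB})$ precisely when $\psi(\alpha_{LB})\le 0$: if $\psi(\alpha_{LB})\le 0<\psi(\alpha_{UB})$ the intermediate value theorem together with strict monotonicity yields exactly one root, whereas if $\psi(\alpha_{LB})>0$ then $\psi>0$ on the entire interval. The only borderline to flag is $a=\tfrac12$, where $\psi(\alpha_{UB})=0$ and the root degenerates to the endpoint with $u_1=u_2$; this is handled as the non-generic case and excluded in the non-degenerate regime $a<\tfrac12$. This proves the stated equivalence.
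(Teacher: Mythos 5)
Your proof is correct, and its overall skeleton matches the paper's: show $\psi$ is strictly increasing on the relevant interval, evaluate the endpoints ($\psi \to -a$ as $z \downarrow \tfrac{1-a}{\gamma_{3-}}$ and $\psi(\alpha_{UB}) = 1-2a$), and conclude via the intermediate value theorem together with the case split on whether $\alpha_{LB}$ equals $\tfrac{1-a}{\gamma_{3-}}$ or $\alpha_0$. Where you genuinely diverge is in the monotonicity step, which is the technical heart of the lemma. The paper differentiates $\psi$ directly and controls the resulting expression by proving the auxiliary bound $(\gamma_{3+}-\gamma_{3-})\zeta(z) < \ln\left(\frac{-\gamma_{3-}^2 z}{\gamma_{3+}(1-a-\gamma_{3-}z)}\right)$, a term-by-term comparison that ultimately rests on $\gamma_{4-}<0$. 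You instead exploit the fact that $\zeta$ is built precisely so that $P(z)/Q(z)=\frac{\gamma_{4-}-\gamma_{3-}}{\gamma_{3+}-\gamma_{4-}}$ is a positive constant (I verified this: exponentiating the definition of $\zeta$ gives exactly that ratio), collapsing $\psi$ to the affine form $\frac{\gamma_{3+}-\gamma_{3-}}{\gamma_{3+}-\gamma_{4-}}Q(z)-a$, after which only the sign of $Q'/Q$ must be determined; the computation $z^*-\alpha_{UB}=\frac{(1-a)\gamma_{4-}}{\gamma_{3-}(\gamma_{3+}-\gamma_{3-})}>0$ is where $\gamma_{4-}<0$ reappears, and all the sign checks go through. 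This decomposition is cleaner and more transparent than the paper's estimate, and it makes visible that $\psi$ is just a rescaled copy of $Q$. Two minor observations: you are more careful than the paper at $a=\tfrac12$, where $\psi(\alpha_{UB})=0$ and the root degenerates to the endpoint (the paper simply asserts $1-2a>0$); on the other hand, like the paper, you treat $\psi(\alpha_{LB})=0$ as still yielding a root in the \emph{open} interval $(\alpha_{LB},\alpha_{UB})$, whereas strict monotonicity would then place the unique root at the endpoint itself --- a boundary case neither proof resolves and which is immaterial to how the lemma is used.
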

    \begin{proof}
        We can rewrite $\psi$ as follows:
        \begin{equation*}
        \begin{aligned}
            \psi(z)
            &=(1-a-\gamma_{3-}z)\left[\frac{\gamma_{3-}(\gamma_{4-}-\gamma_{3-})z}{(1-a-\gamma_{3-}z)(\gamma_{3+}-\gamma_{4-})}\right]^{\frac{\gamma_{3+}}{\gamma_{3+}-\gamma_{3-}}}+\gamma_{3-}ze^{\gamma_{3-}\zeta(z)}-a\\
            &=(1-a-\gamma_{3-}z)^{-\frac{\gamma_{3-}}{\gamma_{3+}-\gamma_{3-}}}\left[\frac{\gamma_{3-}(\gamma_{4-}-\gamma_{3-})z}{\gamma_{3+}-\gamma_{4-}}\right]^{\frac{\gamma_{3+}}{\gamma_{3+}-\gamma_{3-}}}+\gamma_{3-}ze^{\gamma_{3-}\zeta(z)}-a.
        \end{aligned}
        \end{equation*}
        Since $\gamma_{3-}<0$ and $\lim_{z\downarrow \frac{1-a}{\gamma_{3-}}}\zeta(z)=+\infty$,  $\lim_{z\downarrow \frac{1-a}{\gamma_{3-}}}\psi(z)=-a<0$. From Lemma \ref{lemma:u_1<u_2}, $\zeta(\alpha_{UB})=0$. Because $a\leq \frac{1}{2}$, $\psi(\alpha_{UB})=1-2a>0$. Hence, by the intermediate value theorem, there exists a $z_0\in\left(\frac{1-a}{\gamma_{3-}},\alpha_{UB}\right)$ such that $\psi(z_0)=0$. We now prove the uniqueness of $z_0$. From the definitions of $\zeta$ and $\psi$ in \eqref{eqn:psi}, we have
        \begin{equation*}
        \begin{aligned}
            \zeta'(z)&=\frac{1-a}{(\gamma_{3+}-\gamma_{3-})(1-a-\gamma_{3-}z)z},\\
            \psi'(z)&=\gamma_{3-}\left(e^{\gamma_{3-}\zeta(z)}-e^{\gamma_{3+}\zeta(z)}\right)+\frac{1-a}{\gamma_{3+}-\gamma_{3-}}\left[\frac{\gamma_{3+}}{z}e^{\gamma_{3+}\zeta(z)}+\frac{\gamma_{3-}^2}{1-a-\gamma_{3-}z}e^{\gamma_{3-}\zeta(z)}\right].    
        \end{aligned}
        \end{equation*}
        By $\gamma_{3+}>\gamma_{3-}$,
        \begin{equation*}
            (\gamma_{3+}-\gamma_{3-})\zeta(z)=\ln\left(\frac{\gamma_{3-}(\gamma_{4-}-\gamma_{3-})z}{(1-a-\gamma_{3-}z)(\gamma_{3+}-\gamma_{4-})}\right)<\ln\left(\frac{-\gamma_{3-}^2z}{\gamma_{3+}(1-a-\gamma_{3-}z)}\right).
        \end{equation*}
        It follows that
        \begin{equation*}
            \frac{\gamma_{3+}}{z}e^{\gamma_{3+}\zeta(z)}>\frac{-\gamma_{3-}^2}{1-a-\gamma_{3-}z}e^{\gamma_{3-}\zeta(z)}.
        \end{equation*}
        Now using $\zeta(z)\geq 0$ for $z\in\left(\frac{1-a}{\gamma_{3-}},\alpha_{UB}\right)$, we obtain  $\psi'(z)>0$ on $\left(\frac{1-a}{\gamma_{3-}},\alpha_{UB}\right)$. The uniqueness of $z_0$ follows immediately.

        If $\overline c_2\geq \frac{N_3N_2}{2N_1}$, then $\alpha_{LB}=\frac{1-a}{\gamma_{3-}}$. Using the argument above, we conclude that there exists a unique $z_0\in\left(\frac{1-a}{\gamma_{3-}},\alpha_{UB}\right)$ such that $\psi(z_0)=0$. The result follows after choosing $z_0=\alpha_{3-}$. Suppose now that $\overline c_2< \frac{N_3N_2}{2N_1}$. This implies that $\alpha_{LB}=\alpha_0$. If $\psi(\alpha_{LB})\leq 0$, then the result follows with $z_0=\alpha_{3-}$. Suppose $\psi(\alpha_{LB})>0$. By the intermediate value theorem and the strict monotonicity of $\psi$, there exists a unique solution $z_1\in\left(\frac{1-a}{\gamma_{3-}},\alpha_{LB}\right)$. Hence, there is no solution on the interval $(\alpha_{LB},\alpha_{UB})$, which completes the proof.
    \end{proof}
    \begin{remark}\label{remark:psi}
        Lemma \ref{lemma:exist alpha3} implies that $\psi(z)=0$ has at most one solution in $(\alpha_{LB},\alpha_{UB})$. Moreover, if $\overline c_2\geq \frac{N_3N_2}{2N_1}$, then the existence of $\alpha_{3-}$ is guaranteed. Since $\psi(\alpha_0)\leq 0$, then by Lemma \ref{lemma:exist alpha3}, $\alpha_{3-}$ is the unique solution to $\psi(z)=0$ on $(\alpha_{LB},\alpha_{UB})$. 
    \end{remark}

    \begin{remark}\label{remark:g is classical soln}
        By construction, $g$ defined in \eqref{eqn:g first} is twice continuously differentiable and satisfies the HJB equation in \eqref{eqn:g first}. As such, $g$ is a classical solution to the HJB equation in \eqref{eqn:hjborig}. By a standard verification lemma, $g$ equals the value function $V$ of the optimization problem in \eqref{eq:V} if Conditions $(i)$ and $(ii)$ in Theorem \ref{prop:w0<u1<u2} hold.
    \end{remark}

	\subsection{Proof of Theorem \ref{prop:u1<w0<u2}}
    In this section, we present the key results used to obtain Theorem \ref{prop:u1<w0<u2}. The discussion serves as the proof for Theorem \ref{prop:u1<w0<u2}. 
    
    Let Assumption \ref{assume:w1<w2} hold. Moreover, suppose that $\overline c_1+\overline c_2\geq \frac{N_3N_2}{2N_1}$ and $\psi(\alpha_{0})> 0$. 

    \subsubsection*{Deriving the analytical solution.}
    
    Suppose for now that $u_1<w_0\leq u_2$. In the region $\{x<u_1\}$, we get \eqref{eqn:region 1} and still obtain its solution $g_1(x)=K_1x^{\gamma_1}$, where $\gamma_1$ is given by \eqref{eqn:gamma1} and $K_1>0$ is an unknown constant. In the region $\{u_1< x< w_0\}$, we have $(C^*_1,C^*_2)=(0,\overline{c}_2)$ and $\theta^*_i$ satisfies \eqref{eqn:optimal_theta} for $i=1,2$. The HJB equation \eqref{eqn:hjborig} then becomes
	\begin{equation}\label{eqn:ode2}
		-M\frac{\left[g'(x)\right]^2}{g''(x)}-\overline{c}_2g'(x)-\beta g(x)+(1-a)\overline c_2=0,
	\end{equation}
	where $M:=\frac{N_1\beta}{N_2-N_1}$. Let $g_2$ be the solution to \eqref{eqn:ode2}. Suppose $g_0$ is the concave solution to $-M\frac{\left[g'(x)\right]^2}{g''(x)}-\overline{c}_2g'(x)-\beta g(x)=0$. As in \cite{hojgaard1999}, the concavity of $g_0$ implies the existence of a function $\chi:\mathbb{R}\to [0,\infty)$ satisfying
	\begin{equation*}
		-\ln [g_0'(\chi(z))]=z.
	\end{equation*}
	It then follows that
	\begin{equation}\label{eqn:odet}
		g_0'(\chi(z))=e^{-z}\quad\mbox{and}\quad g_0''(\chi(z))=-\frac{e^{-z}}{\chi'(z)}.
	\end{equation}
	Substituting $x=\chi(z)$ in \eqref{eqn:ode2} and using \eqref{eqn:odet} yield
	\begin{equation*}
		0=M\chi'(z)e^{-z}-\overline{c}_2e^{-z}-\beta g_0(\chi(z)).
	\end{equation*}
	Differentiating with respect to $z$ and using \eqref{eqn:odet} once more yield
	\begin{equation*}
		0=M\chi''(z)e^{-z}-(M+\beta)\chi'(z)e^{-z}+\overline{c}_2e^{-z},
	\end{equation*}	
	or, equivalently,
	\begin{equation}\label{eqn:odet2}
		0=\chi''(z)-\left(1+\frac{\beta}{M}\right)\chi'(z)+\frac{\overline{c}_2}{M}.
	\end{equation}
	The solution to \eqref{eqn:odet2} is given by
	\begin{equation}
		\chi(z)=k_1e^{\left(1+\frac{\beta}{M}\right)z}+\frac{\overline{c}_2}{M+\beta}z+k_2=k_1e^{\frac{N_2}{N_1}z}+\frac{\overline{c}_2(N_2-N_1)}{N_2\beta}z+k_2,
	\end{equation}
	where $k_1,k_2$ are constants. From \eqref{eqn:odet} and \eqref{eqn:defn of u1 & u2}, it follows that 
        \begin{equation}\label{eqn:chi to u1}
            \chi(-\ln (1-a))=u_1. 
        \end{equation} Moreover, we obtain
	\begin{equation}\label{eqn:X prime}
		\chi'(z)=\frac{k_1N_2}{N_1}e^{\frac{N_2}{N_1}z}+\frac{\overline{c}_2(N_2-N_1)}{N_2\beta}.
	\end{equation} Since $g_0$ is concave, we use \eqref{eqn:odet} to obtain $\chi'(z)>0$. This implies that $k_1\geq 0$ and that $\chi$ is strictly increasing. Combining this with the fact that $\chi$ is continuous, its inverse, denoted by $\chi^{-1}$, exists and is also strictly increasing and continuous. From \eqref{eqn:odet} we determine a solution to \eqref{eqn:ode2} by
	\begin{equation*}
		g_2(x)=\int_{u_1}^xe^{-\chi^{-1}(y)}dy+K_2,
	\end{equation*}
	where $\frac{(1-a)\overline{c}_2}{\beta}$ is already incorporated into the constant $K_2$. Using Lemma \ref{lemma:plateau}, the HJB equation in the region $\{u_1<x<u_2\}$ becomes \eqref{eqn:eqn3hjb}, whose solution is $g_3(x)=K_{3+}e^{\gamma_{3+}x}+K_{3-}e^{\gamma_{3-}x}+\frac{(1-a)\overline c_2}{\beta}$, where $\gamma_{3\pm}$ is given by \eqref{eqn:gamma234} and $K_{3\pm}$ are constants. In the region $\{x>u_2\}$, we obtain \eqref{eqn:hjb4} and its solution $g_4(x)=K_{4-}e^{\gamma_{4-}x}+\frac{a\overline c_1+(1-a)\overline c_2}{\beta}$, which satisfies $\lim_{x\to\infty}g(x)=\frac{a\overline c_1+(1-a)\overline c_2}{\beta}$, where $\gamma_{4-}$ is given by \eqref{eqn:gamma234} and $K_{4-}$ is a constant. We then conjecture the following solution:
	\begin{equation*}
		g(x)=
		\begin{cases}
			K_1x^{\gamma_1}&\mbox{if $x<u_1$,}\\
			\int_{u_1}^xe^{-\chi^{-1}(y)}dy+K_2&\mbox{if $u_1<x<w_0$,}\\
            K_{3+}e^{\gamma_{3+}x}+K_{3-}e^{\gamma_{3-}x}+\frac{(1-a)\overline c_2}{\beta}&\mbox{if $w_0<x<u_2$,}\\
			K_{4-}e^{\gamma_{4-}x}+\frac{a\overline c_1+(1-a)\overline c_2}{\beta}&\mbox{if $x>u_2$,}
		\end{cases}
	\end{equation*}
    where $K_1,K_{3\pm},K_{4-},u_1,u_2$ and $w_0$ are still to be determined. 

    To ensure twice continuous differentiability at $x=u_1$, we obtain the following equations:
	\begin{equation}\label{eqn:system1 case 2}
		\begin{aligned}
			K_1u_1^{\gamma_1}&=K_2\\
			K_1\gamma_1 u_1^{\gamma_1-1}&=e^{-\chi^{-1}(u_1)}=1-a\\
			K_1\gamma_1(\gamma_1-1) u_1^{\gamma_1-2}&=-\frac{e^{-\chi^{-1}(u_1)}}{\chi'(-\ln(1-a))}=-\frac{1-a}{\frac{k_1N_2}{N_1}(1-a)^{-\frac{N_2}{N_1}}+\frac{\overline{c}_2(N_2-N_1)}{N_2\beta}}.
		\end{aligned}
	\end{equation}
	From the first and second equations of \eqref{eqn:system1 case 2}, we obtain
    \begin{equation}\label{eqn:K1 & K2}
        K_1=\frac{(1-a)u_1^{1-\gamma_1}}{\gamma_1}\quad\mbox{and}\quad       K_2=\frac{(1-a)u_1}{\gamma_1}.
    \end{equation}
    Dividing the second equation of \eqref{eqn:system1 case 2} by the third equation yields
	\begin{equation}\label{eqn:u1 - k1}
		u_1=k_1(1-a)^{-\frac{N_2}{N_1}}+\frac{\overline{c}_2N_1(N_2-N_1)}{N_2^2\beta}.
	\end{equation} Using \eqref{eqn:xsoln} and \eqref{eqn:chi to u1} yields
	\begin{equation*}
		u_1=k_1(1-a)^{-\frac{N_2}{N_1}}-\frac{\overline c_2(N_2-N_1)}{N_2\beta}\ln (1-a)+k_2.
	\end{equation*} Comparing both equations for $u_1$ gives us 
    \begin{equation*}
        k_2=\frac{\overline c_2(N_2-N_1)}{N_2\beta}\left(\frac{N_1}{N_2}+\ln(1-a)\right).
    \end{equation*}
	We now determine an expression for $k_1$. From \eqref{eqn:defn of w0}, we obtain
	\begin{equation*}
		1=\frac{(\mu_1\sigma_2-\rho\mu_2\sigma_1)g_2'(w_0)}{(\rho^2-1)\sigma_1^2\sigma_2g_2''(w_0)}=\frac{(\mu_1\sigma_2-\rho\mu_2\sigma_1)\chi'(\chi^{-1}(w_0))}{(1-\rho^2)\sigma_1^2\sigma_2},
	\end{equation*}
	or, equivalently,
	\begin{equation}\label{eqn:X'(X^-1(w0))}
		\chi'(\chi^{-1}(w_0))=\frac{(1-\rho^2)\sigma_1^2\sigma_2}{\mu_1\sigma_2-\rho\mu_2\sigma_1}=\frac{w_1}{1-\gamma_1}=\frac{N_3(N_2-N_1)}{2N_1\beta}.
	\end{equation}
    From \eqref{eqn:X prime}, we obtain
    \begin{equation}\label{eqn:X'(X^-1(w0)) pt2}
        \chi'(\chi^{-1}(w_0))=\frac{k_1N_2}{N_1}e^{\frac{N_2}{N_1}\chi^{-1}(w_0)}+\frac{\overline{c}_2(N_2-N_1)}{N_2\beta}.
    \end{equation}
    Combining \eqref{eqn:X'(X^-1(w0))} and \eqref{eqn:X'(X^-1(w0)) pt2} yields
    \begin{equation*}
        \chi^{-1}(w_0)=\frac{N_1}{N_2}\ln\left[\frac{N_1(N_2-N_1)}{k_1N_2\beta}\left(\frac{N_3}{2N_1}-\frac{\overline c_2}{N_2}\right)\right].
    \end{equation*}
    Since $\psi(\alpha_0)<0$ implies $\overline c_2<\frac{N_3N_2}{2N_1}$, $\chi^{-1}(w_0)$ is well-defined. We then obtain an expression for $k_1$ by
    \begin{equation*}
        k_1=\frac{N_1(N_2-N_1)}{N_2\beta}\left(\frac{N_3}{2N_1}-\frac{\overline c_2}{N_2}\right)e^{-\frac{N_2}{N_1}\chi^{-1}(w_0)}>0.
    \end{equation*}
	
    Let $\alpha_{3+}:=K_{3+}e^{\gamma_{3+}u_1}$ and $\alpha_{3-}:=K_{3-}e^{\gamma_{3-}u_1}$. To ensure twice continuous differentiability at $x=w_0$, we need the following equations to hold:
	\begin{equation}\label{eqn:system2 case 2}
		\begin{aligned}
			e^{-\chi^{-1}(w_0)}&=\gamma_{3+}\alpha_{3+}e^{\gamma_{3+}(w_0-u_1)}+\gamma_{3-}\alpha_{3-}e^{\gamma_{3-}(w_0-u_1)},\\
            -\frac{e^{-\chi^{-1}(w_0)}}{\frac{w_1}{1-\gamma_1}}=-\frac{e^{-\chi^{-1}(w_0)}}{\chi'(\chi^{-1}(w_0))}&=\gamma_{3+}^2\alpha_{3+}e^{\gamma_{3+}(w_0-u_1)}+\gamma_{3-}^2\alpha_{3-}e^{\gamma_{3-}(w_0-u_1)}.
		\end{aligned}
	\end{equation}
    From the first equation, we get $k_1$ by
    \begin{equation*}
        k_1=\frac{N_1(N_2-N_1)}{N_2\beta}\left(\frac{N_3}{2N_1}-\frac{\overline c_2}{N_2}\right)\left[\gamma_{3+}\alpha_{3+}e^{\gamma_{3+}(w_0-u_1)}+\gamma_{3-}\alpha_{3-}e^{\gamma_{3-}(w_0-u_1)}\right]^{\frac{N_2}{N_1}}.
    \end{equation*}
    Dividing the first equation by the second equation in \eqref{eqn:system2 case 2} yields
    \begin{equation*}
        \frac{w_1}{\gamma_1-1}=\frac{\gamma_{3+}\alpha_{3+}e^{\gamma_{3+}(w_0-u_1)}+\gamma_{3-}\alpha_{3-}e^{\gamma_{3-}(w_0-u_1)}}{\gamma_{3+}^2\alpha_{3+}e^{\gamma_{3+}(w_0-u_1)}+\gamma_{3-}^2\alpha_{3-}e^{\gamma_{3-}(w_0-u_1)}},
    \end{equation*}
    which is equivalent to
    \begin{equation}\label{eqn:w0 - u1}
        w_0=u_1+\frac{1}{\gamma_{3+}-\gamma_{3-}}\ln\left[\frac{\gamma_{3-}\alpha_{3-}\left(\frac{w_1}{\gamma_1-1}\gamma_{3-}-1\right)}{\gamma_{3+}\alpha_{3+}\left(1-\frac{w_1}{\gamma_1-1}\gamma_{3+}\right)}\right].
    \end{equation}
    It must be noted that we have not established that the above formula for $w_0$ is well-defined.

     Let $\alpha_{4+}:=K_{4-}e^{\gamma_{4-}u_2}$; we have $\alpha_{4-}=\frac{a}{\gamma_{4-}}$. To ensure twice continuous differentiability at $x=u_2$, we obtain the system of equations in \eqref{eqn:system at u_2} and the expression for $u_2$ in \eqref{eqn:u2}. Combining \eqref{eqn:w0 - u1} and \eqref{eqn:u2} yields 
    \begin{equation}\label{eqn:u2 - w0}
        u_2=w_0+\frac{1}{\gamma_{3+}-\gamma_{3-}}\ln\left[\frac{\left(\frac{w_1}{\gamma_1-1}\gamma_{3-}-1\right)(\gamma_{4-}-\gamma_{3-})}{\left(1-\frac{w_1}{\gamma_1-1}\gamma_{3+}\right)(\gamma_{3+}-\gamma_{4-})}\right].
    \end{equation}
    Thus, we have obtained the form of the candidate value function $g$ defined in \eqref{eqn:candidate 3} and the formulas for $w_0$, $u_1$ and $u_2$ in \eqref{eqn:w0 - u1}, \eqref{eqn:u1 - k1} and \eqref{eqn:u2 - w0}, respectively. Similar to the proof of Theorem \ref{prop:u1<w0<u2}, we have not yet (i) solved for $\alpha_{3-}$, which implies that $w_0$ and $u_2$ may not be well-defined, (ii) guaranteed that $u_1< w_0\leq u_2$, and (iii) shown that $g$ is increasing and concave.

    \subsubsection*{Establishing the bounds for $\alpha_{3-}$.}

    Since $\overline c_2<\frac{N_3N_2}{2N_1}$, it can be shown that $\frac{w_1}{\gamma_1-1}\gamma_{3-}-1>0$, which further implies that the expression for $w_0$ in \eqref{eqn:w0 - u1} is well-defined. To ensure that $w_0>u_1$, we must have
    \begin{equation*}
        \frac{\gamma_{3-}\alpha_{3-}\left(\frac{w_1}{\gamma_1-1}\gamma_{3-}-1\right)}{\gamma_{3+}\alpha_{3+}\left(1-\frac{w_1}{\gamma_1-1}\gamma_{3+}\right)}>1,
    \end{equation*}
    which is equivalent to
    \begin{equation*}
        \alpha_{3-}<\frac{1-a}{\gamma_{3-}(\gamma_{3+}-\gamma_{3-})}\left(\frac{1-\gamma_1}{w_1}+\gamma_{3+}\right)=\alpha_{0}.
    \end{equation*}
    From Lemma \ref{lemma: LB for alpha3}, we have $\alpha_0>\frac{1-a}{\gamma_{3-}}$, which implies that $\alpha_{LB}=\alpha_0$. Together with Lemma \ref{lemma:alpha3-<0}, we have $\alpha_{3-}<0<\alpha_{3+}$. 
    This finding is consistent with the result in Lemma \ref{lemma:w_0<u_1}. 
    Also, the statements regarding $g$ in Remark \ref{remark:g is classical soln} apply here as well.

   For $u_2$ defined in \eqref{eqn:u2 - w0}, we have $u_2\geq w_0$ by $\overline c_1+\overline c_2\geq \frac{N_3N_2}{2N_1}$. Recalling Lemma \ref{lemma:u_1<u_2}, we must have $\alpha_{3-}< \alpha_{UB}$ to ensure that $u_1<u_2$. Hence, we have established that $u_1<w_0\leq u_2$. 
   
   Since $\overline c_1+\overline c_2\geq \frac{N_3N_2}{2N_1}$, by Lemma \ref{lemma:LB<UB}, $\alpha_0\leq \alpha_{UB}$ holds. By following similar arguments as in the proof of Theorem \ref{prop:u1<w0<u2}, we obtain the following bounds for $\alpha_{3-}$:
    \begin{equation}
        \frac{1-a}{\gamma_{3-}}<\alpha_{3-}<\alpha_{0}.
    \end{equation}

    \subsubsection*{Solving for $\alpha_{3-}$.}
    Since $\psi(\alpha_0)>0$ by assumption and $\psi\left(\frac{1-a}{\gamma_{3-}}\right)<0$, $\alpha_{3-}$ is the unique solution of $\psi(z)=0$ on $\left(\frac{1-a}{\gamma_{3-}},\alpha_{0}\right)$ via Lemma \ref{lemma:exist alpha3}.  
    
    \begin{remark}
        The scenario $u_1\leq u_2\leq w_0$ is not possible under the assumptions $\overline c_1+\overline c_2\geq \frac{N_3N_2}{2N_1}$ and $\psi(\alpha_{0})> 0$. To see this, suppose $u_1\leq u_2\leq w_0$ holds. We know that in the region $\{x>w_0\}$, we have $g_4(x)=\frac{a}{\gamma_{4-}}e^{\gamma_{4-}(x-u_2)}+\frac{a\overline c_1+(1-a)\overline c_2}{\beta}$. By definition of $w_0$, we must have $\frac{w_1}{\gamma_1-1}=\frac{1}{\gamma_{4-}}$, which holds if and only if $\overline c_1+\overline c_1=\frac{N_3N_2}{2N_1}$. In the region $\{u_2<x<w_0\}$, we have $g_3(x)=\int_{u_2}^x e^{-Z^{-1}(y)}dy+K_3$, where $Z(y)=k_3e^{\frac{N_2}{N_1}y}+\frac{(\overline c_1+\overline c_2)(N_2-N_1)}{N_2\beta}y+k_4$. Using the definition of $w_0$ once more yields $\frac{w_1}{1-\gamma_1}=Z'(Z^{-1}(w_0))=\frac{k_3N_2}{N_1}e^{\frac{N_2}{N_1}y}+\frac{(\overline c_1+\overline c_2)(N_2-N_1)}{N_2\beta}$. From $\overline c_1+\overline c_1=\frac{N_3N_2}{2N_1}$, $\frac{w_1}{1-\gamma_1}=\frac{(\overline c_1+\overline c_2)(N_2-N_1)}{N_2\beta}$, which implies that $\frac{k_3N_2}{N_1}e^{\frac{N_2}{N_1}y}=0$, or, equivalently, $k_3=0$. The reinsurance level for Line 1 when $u_2<x<w_0$ must then satisfy $1-\theta_1(x)=\frac{1-\gamma_1}{w_1}Z'(Z^{-1}(x))=1$, which contradicts the definition of $w_0$. 
    \end{remark}
    
    \subsection{Proof of Theorem \ref{prop:u1<u2<w0}}

    Let Assumption \ref{assume:w1<w2} hold and suppose that $\overline c_1+\overline c_2<\frac{N_3N_2}{2N_1}$. 

    \subsubsection*{Proving that $w_0$ is infinite.}
    Suppose that $w_0$ exists and satisfies $u_1\leq u_2< w_0$. In the region $\{x>w_0\}$, we get \eqref{eqn:hjb4}, and its solution is $g_4(x)=\frac{a}{\gamma_{4-}}e^{\gamma_{4-}x}+\frac{a\overline c_1+(1-a)\overline c_2}{\beta}$, where $\gamma_{4-}$ is given in \eqref{eqn:gamma234} and $K_{4-}$ is a constant. Let $g_3(x)$ be the solution to the HJB equation in the region $\{u_2<x<w_0\}$. From \eqref{eqn:defn of w0} and the assumption that $\overline c_1+\overline c_2<\frac{N_3N_2}{2N_1}$, 
	\begin{equation*}
		1=\frac{(\mu_1\sigma_2-\rho\mu_2\sigma_1)g_4'(w_0)}{(\rho^2-1)\sigma_1^2\sigma_2g_4''(w_0)}=\frac{-2N_1\beta}{N_3(N_2-N_1)\gamma_{4-}}<1,
	\end{equation*}
	which is a contradiction. This suggests that no such $w_0$ exists (or we write $w_0=\infty$), which implies that there are only two switching points, $u_1$ and $u_2$.

    \subsubsection*{Deriving the analytical solution.}
    
	Suppose for now that $u_1\leq  u_2<w_0=\infty$. In the region $\{x<u_1\}$, we have \eqref{eqn:region 1} and obtain its solution $g_1(x)=K_1x^{\gamma_1}$, where $K_1>0$ is a constant, and $\gamma_1$ is given by \eqref{eqn:gamma1}. In the region $\{u_1<x<u_2\}$, we obtain $g_2(x)=\int_{u_1}^xe^{-\chi^{-1}(y)}dy+K_2$, where $\chi(z)$ satisfies \eqref{eqn:xsoln}, and $K_2$ is a constant. From \eqref{eqn:odet} and \eqref{eqn:defn of u1 & u2}, it follows that $\chi(-\ln (1-a))=u_1$ and $\chi(-\ln a)=u_2$. Hence, from \eqref{eqn:xsoln}, we obtain $k_1$ and $k_2$ in \eqref{eqn:k1 & k2 case 3}.
	
    In the region $\{x > u_2\}$, we have $(C^*_1,C^*_2)=(\overline{c}_1,\overline{c}_2)$ and the HJB equation becomes
	\begin{equation}\label{eqn:reg3x}
		\mathcal{H}(x)-(\overline c_1+\overline c_2)g'(x)-\beta g(x)+a\overline c_1+(1-a)\overline{c}_2=0,
	\end{equation}
    where $\mathcal{H}$ is defined by \eqref{eqn:optim reinsurance}.
	The first order conditions for \eqref{eqn:reg3x} still satisfy \eqref{eqn:optimal_theta}. Substituting \eqref{eqn:optimal_theta} into \eqref{eqn:reg3x} yields
	\begin{equation}\label{eqn:ode3}
		0=-M\frac{\left[g'(x)\right]^2}{g''(x)}-(\overline c_1+\overline{c}_2)g'(x)-\beta g(x)+a\overline{c}_1+(1-a)\overline{c}_2,
	\end{equation}
	where $M:=\frac{N_1\beta}{N_2-N_1}$. A solution to \eqref{eqn:ode3} that satisfies the condition $\lim_{x\to\infty}g(x)=\frac{a\overline c_1+(1-a)\overline c_2}{\beta}$ is given by
	\begin{equation*}
		g_3(x)=\frac{K_3}{\gamma_3}e^{\gamma_3 x}+\frac{a\overline c_1+(1-a)\overline c_2}{\beta},
	\end{equation*}
	where $\gamma_3$ is given by \eqref{eqn:gamma3}. We then conjecture the following solution:
	\begin{equation*}
		g(x)=
		\begin{cases}
			K_1x^{\gamma_1}&\mbox{if $x<u_1$,}\\
			\int_{u_1}^xe^{-\chi^{-1}(y)}dy+K_2&\mbox{if $u_1<x<u_2$,}\\
			\frac{K_3}{\gamma_3}e^{\gamma_3 x}+\frac{a\overline c_1+(1-a)\overline c_2}{\beta}&\mbox{if $x>u_2$,}
		\end{cases}
	\end{equation*}
	where $K_1,K_2,K_3,u_1,u_2$ are still to be determined.  To ensure that $g$ is twice continuously differentiable, we require $g$, $g'$, and $g''$ to be continuous at the switching points $u_1$ and $u_2$. At $x=u_1$, we obtain the system of equations in \eqref{eqn:system1 case 2}, and $K_1$ and $K_2$ in \eqref{eqn:K1 & K2}. Dividing the second equation by the third equation in \eqref{eqn:system1 case 2} yields
	\begin{equation}\label{eqn:u1u2}
		\frac{u_1}{1-\gamma_1}=\frac{u_2-u_1+\frac{\overline{c}_2(N_2-N_1)}{N_2\beta}\ln \left(\frac{a}{1-a}\right)}{a^{-\frac{N_2}{N_1}}-(1-a)^{-\frac{N_2}{N_1}}}\cdot\frac{N_2}{N_1}(1-a)^{-\frac{N_2}{N_1}}+\frac{\overline c_2(N_2-N_1)}{N_2\beta}.
	\end{equation}
	Here, $u_1$ and $u_2$ are still to be determined.
	
	Since in the neighborhood of $u_2$ the function $g$ satisfies \eqref{eqn:ode3} and \eqref{eqn:reg3x} with $\theta^*_1,\theta^*_2$ satisfying \eqref{eqn:optimal_theta}, it suffices to show that $g'$ and $g''$ are continuous at $u_2$. To see this, we have from \eqref{eqn:ode3} and \eqref{eqn:reg3x} the following equation:
	\small\begin{equation*}
		-M\frac{\left[g_2'(u_2)\right]^2}{g_2''(u_2)}-\overline{c}_2g_2'(u_2)-\beta g_2(u_2)+(1-a)\overline c_2=-M\frac{\left[g_3'(u_2)\right]^2}{g_3''(u_2)}-(\overline c_1+\overline{c}_2)g'_3(u_2)-\beta g_3(u_2)+a\overline{c}_1+(1-a)\overline{c}_2.
	\end{equation*}\normalsize
	If $g'_2(u_2)=g'_3(u_2)$ and $g''_2(u_2)=g''_3(u_2)$, then the above equation can be rewritten as:
	\begin{equation*}
		-\beta g_2(u_2)=-\overline c_1g'_3(u_2)-\beta g_3(u_2)+a\overline{c}_1.
	\end{equation*}
	Because $g'_3(u_2)=a$, we get $g_2(u_2)=g_3(u_2)$. At $x=u_2$, we then have the following system of equations:
	\begin{equation*}
		\begin{aligned}
			a=e^{-\chi^{-1}(u_2)}&=K_3e^{\gamma_3u_2}\\
			-\frac{a}{\frac{k_1N_2}{N_1}\cdot a^{-\frac{N_2}{N_1}}+\frac{\overline c_2(N_2-N_1)}{N_2\beta}}=-\frac{e^{-\chi^{-1}(u_2)}}{\chi'(\chi^{-1}(u_2))}&=K_3\gamma_3e^{\gamma_3u_2}.
		\end{aligned}
	\end{equation*}
	The first equation is equivalent to 
	\begin{equation*}
		K_3=ae^{-\gamma_3u_2}.
	\end{equation*}
	Dividing the first equation by the second equation yields:
	\begin{equation}\label{eqn:u2u1}
		u_2=u_1+\frac{\overline c_1N_1(N_2-N_1)}{N_2^2\beta}\left(1-\left(\frac{a}{1-a}\right)^{\frac{N_2}{N_1}}\right)-\frac{\overline c_2(N_2-N_1)}{N_2\beta}\ln \left(\frac{a}{1-a}\right).
	\end{equation}
	Since $a\leq \frac{1}{2}$, it is clear that $u_2\geq u_1$. Using \eqref{eqn:u1u2} and \eqref{eqn:u2u1} yields
	\begin{equation}\label{eqn:u1solved}
		u_1=(1-\gamma_1)\left[\frac{\overline c_1(N_2-N_1)}{N_2\beta}\left(\frac{a}{1-a}\right)^{\frac{N_2}{N_1}}+\frac{\overline c_2(N_2-N_1)}{N_2\beta}\right].
	\end{equation}
	Thus, we obtain $g$ defined in \eqref{eqn:g w0>u1}.

    \subsubsection*{Showing no ``zero reinsurance".}
	We now prove that $\theta_1^*(x)>0$ and $\theta_2^*(x)>0$ for any $x>0$. Since $\chi^{-1}(x)$ is an increasing function on $[u_1,u_2]$ and $\overline c_1+\overline c_2<\frac{N_3N_2}{2N_1}$ by assumption, in the region $\{u_1<x<u_2\}$,
	\begin{equation*}
		\begin{aligned}
			1-\theta_1^*(x)=\frac{1-\gamma_1}{w_1}\chi'(\chi^{-1}(x))\leq \frac{1-\gamma_1}{w_1}\chi'(\chi^{-1}(u_2))=\frac{2N_1}{N_3N_2}(\overline c_1+\overline c_2)<1,
		\end{aligned}
	\end{equation*}
	or, equivalently,
	\begin{equation*}
		\theta_1^*(x)=1-\frac{1-\gamma_1}{w_1}\chi'(\chi^{-1}(x))>0\quad\mbox{and}\quad \theta_2^*(x)=1-\frac{1-\gamma_1}{w_2}\chi'(\chi^{-1}(x))>0.
	\end{equation*}
	Consequently, in the region $\{x<u_1\}$, we have the following:
	\begin{equation*}
		\theta_1^*(x)=1-\frac{x}{w_1}>0\quad\mbox{and}\quad \theta_2^*(x)=1-\frac{x}{w_2}>0.
	\end{equation*}
	In the region $\{x>u_2\}$, since $\overline c_1+\overline c_2<\frac{N_3N_2}{2N_1}$ by assumption, we have
	\begin{equation*}
		1-\theta_1^*(x)=-\frac{1-\gamma_1}{w_1\gamma_3}=-\frac{1-\gamma_1}{w_1}\cdot\frac{-(\overline c_1+\overline c_2)(N_2-N_1)}{N_2\beta}=\frac{2N_1}{N_3N_2}(\overline c_1+\overline c_2)<1,
	\end{equation*}
	which, with the assumption that $w_1\leq w_2$, is equivalent to
	\begin{equation*}
		1-\theta_1^*(x)=-\frac{1-\gamma_1}{w_1\gamma_3}<1\quad\mbox{and}\quad 1-\theta_2^*(x)=-\frac{1-\gamma_1}{w_2\gamma_3}<1.
	\end{equation*}

\section{Conclusion}\label{sec:conclusion}
We study a bivariate optimal dividend problem where an insurer maximizes the expected weighted sum of total dividends of two collaborating business lines under the diffusion model with correlated Brownian motions. 
In addition to dividend payout, our model allows the manager of the insurer to purchase proportional reinsurance contracts to mitigate the risk exposure of each line and to inject capital from one line into the other to prevent potential bankruptcy. We obtain a complete analytical solution to this problem; in particular, we identify three scenarios and obtain the value function and optimal strategies in closed form, respectively. 

We show that the optimal dividend strategy is a threshold strategy, and the more important business line has a smaller threshold than the less important line. 
 The optimal reinsurance coverage strategy is shown to be decreasing with respect to the aggregate reserve level. The reinsurance coverage of both lines remain constant (independent of the aggregate reserve level) as soon as the aggregate reserve level hits the switching point that affect reinsurance. The correlation coefficient also plays a significant role in determining the optimal reinsurance coverage. The optimal capital transfer strategy (stated in Theorem \ref{thm:op_L}) is consistent in all three scenarios, and the decision is to either transfer reserves to save the line at risk or wait until the surplus pair leaves the current region.

	\bibliographystyle{plainnat} 
	\bibliography{References}

\begin{thebibliography}{34}
\providecommand{\natexlab}[1]{#1}
\providecommand{\url}[1]{\texttt{#1}}
\expandafter\ifx\csname urlstyle\endcsname\relax
  \providecommand{\doi}[1]{doi: #1}\else
  \providecommand{\doi}{doi: \begingroup \urlstyle{rm}\Url}\fi

\bibitem[Albrecher and Thonhauser(2009)]{albrecher2009}
Hansj{\"{o}}rg Albrecher and Stefan Thonhauser.
\newblock Optimality results for dividend problems in insurance.
\newblock \emph{Revista De La Real Academia De Ciencias Exactas, Fisicas y
  Naturales}, 103\penalty0 (2):\penalty0 295--320, 2009.

\bibitem[Albrecher et~al.(2017)Albrecher, Azcue, and Muler]{albrecher2017}
Hansj{\"{o}}rg Albrecher, Pablo Azcue, and Nora Muler.
\newblock Optimal dividend strategies for two collaborating insurance
  companies.
\newblock \emph{Advances in Applied Probability}, 49\penalty0 (2):\penalty0
  515--548, 2017.

\bibitem[Albrecher et~al.(2022)Albrecher, Azcue, and Muler]{albrecher2022}
Hansj{\"{o}}rg Albrecher, Pablo Azcue, and Nora Muler.
\newblock Optimal ratcheting of dividends in a {B}rownian risk model.
\newblock \emph{SIAM Journal on Financial Mathematics}, 13\penalty0
  (3):\penalty0 657--701, 2022.

\bibitem[Asmussen and Albrecher(2010)]{asmussenbook2010}
S{\o}ren Asmussen and Hansj{\"{o}}rg Albrecher.
\newblock \emph{Ruin Probabilities}, volume~14 of \emph{Advanced Series on
  Statistical Sciences \& Applied Probability}.
\newblock World Scientific, Hackensack, NJ, 2nd edition, 2010.

\bibitem[Asmussen and Taksar(1997)]{asmussen1997}
S{\o}ren Asmussen and Michael Taksar.
\newblock Controlled diffusion models for optimal dividend pay-out.
\newblock \emph{Insurance: Mathematics and Economics}, 20:\penalty0 1--15,
  1997.

\bibitem[Asmussen et~al.(2000)Asmussen, H{\o}jgaard, and Taksar]{asmussen2000}
S{\o}ren Asmussen, Bjarne H{\o}jgaard, and Michael Taksar.
\newblock Optimal risk control and dividend distribution policies. example of
  excess-of loss reinsurance for an insurance corporation.
\newblock \emph{Finance and Stochastics}, 4\penalty0 (3):\penalty0 299--324,
  2000.

\bibitem[Avanzi(2009)]{avanzi2009}
Benjamin Avanzi.
\newblock Strategies for dividend distribution: A review.
\newblock \emph{North American Actuarial Journal}, 13\penalty0 (2):\penalty0
  217--251, 2009.

\bibitem[Avram et~al.(2008)Avram, Palmowski, and Pistorius]{avram2008}
Florin Avram, Zbigniew Palmowski, and Martijn Pistorius.
\newblock A two-dimensional ruin problem on the positive quadrant.
\newblock \emph{Insurance: Mathematics and Economics}, 42\penalty0
  (1):\penalty0 227--234, 2008.

\bibitem[Azcue and Muler(2005)]{azcue2005}
Pablo Azcue and Nora Muler.
\newblock Optimal reinsurance and dividend distribution policies in the
  {Cram\'{e}r-Lundberg} model.
\newblock \emph{Mathematical Finance}, 15\penalty0 (2):\penalty0 261--308,
  2005.

\bibitem[Azcue and Muler(2013)]{azcue2013}
Pablo Azcue and Nora Muler.
\newblock Minimizing the ruin probability allowing investments in two assets: a
  two-dimensional problem.
\newblock \emph{Mathematical Methods of Operations Research}, 77\penalty0
  (2):\penalty0 177--206, 2013.

\bibitem[Azcue and Muler(2021)]{azcuemuler2021}
Pablo Azcue and Nora Muler.
\newblock A multidimensional problem of optimal dividends with irreversible
  switching: A convergent numerical scheme.
\newblock \emph{Applied Mathematics \& Optimization}, 83:\penalty0 1613--1649,
  2021.

\bibitem[Azcue et~al.(2019)Azcue, Muler, and Palmowski]{azcue2019}
Pablo Azcue, Nora Muler, and Zbigniew Palmowski.
\newblock Optimal dividend payments for a two-dimensional insurance risk
  process.
\newblock \emph{European Actuarial Journal}, 9:\penalty0 241--272, 2019.

\bibitem[Badescu et~al.(2011)Badescu, Cheung, and Rabehasaina]{badescu2011}
Andrei~L. Badescu, Eric C.~K. Cheung, and Landy Rabehasaina.
\newblock A two-dimensional risk model with proportional reinsurance.
\newblock \emph{Journal of Applied Probability}, 48\penalty0 (3):\penalty0
  749--765, 2011.

\bibitem[Czarna and Palmowski(2011)]{czarna2011}
Irmina Czarna and Zbigniew Palmowski.
\newblock {D}e {F}inetti's dividend problem and impulse control for a
  two-dimensional insurance risk process.
\newblock \emph{Stochastic Models}, 27\penalty0 (2):\penalty0 220--250, 2011.

\bibitem[de~Finetti(1957)]{definetti1957}
Bruno de~Finetti.
\newblock Su un'impostazione altervativa della teoria collettiva del rischio.
\newblock \emph{Transactions of the 15th International Congress of Actuaries},
  2:\penalty0 433--443, 1957.

\bibitem[Gerber(1969)]{gerber1969}
Hans~U. Gerber.
\newblock {Entscheidungskriterien f\"{u}r den zusammengesetzten
  Poisson-Prozess}.
\newblock \emph{Schweizer Aktuarsvereinigung Mitteilungen}, 1:\penalty0
  185--227, 1969.

\bibitem[Gerber and Shiu(2004)]{gerber2004}
Hans~U. Gerber and Elias S.~W. Shiu.
\newblock Optimal dividends: Analysis with {B}rownian motion.
\newblock \emph{North American Actuarial Journal}, 8\penalty0 (1):\penalty0
  1--20, 2004.

\bibitem[Grandits(2019)]{grandits2019saj}
Peter Grandits.
\newblock A two-dimensional dividend problem for collaborating companies and an
  optimal stopping problem.
\newblock \emph{Scandinavian Actuarial Journal}, 2019\penalty0 (1):\penalty0
  80--96, 2019.

\bibitem[Grandits(2025)]{grandits2025}
Peter Grandits.
\newblock {A singularly perturbed ruin problem for a two-dimensional Brownian
  motion in the positive quadrant}.
\newblock \emph{Journal of Applied Probability}, 62\penalty0 (1):\penalty0
  269--283, 2025.

\bibitem[Gu et~al.(2018)Gu, Steffensen, and Zheng]{gu2018}
Jia-Wen Gu, Morgen Steffensen, and Harry Zheng.
\newblock Optimal dividend strategies of two collaborating businesses in the
  diffusion approximation model.
\newblock \emph{Mathematics of Operations Research}, 43\penalty0 (2):\penalty0
  377--398, 2018.

\bibitem[Guan et~al.(2022)Guan, Xu, and Zhou]{guan2022}
Chonghu Guan, Zuo~Quan Xu, and Rui Zhou.
\newblock Dynamic optimal reinsurance and dividend payout in finite time
  horizon.
\newblock \emph{Mathematics of Operations Research}, 48\penalty0 (1):\penalty0
  544--568, 2022.

\bibitem[H{\o}jgaard and Taksar(1999)]{hojgaard1999}
Bjarne H{\o}jgaard and Michael Taksar.
\newblock Controlling risk exposure and dividends payout schemes: insurance
  company example.
\newblock \emph{Mathematical Finance}, 9\penalty0 (2):\penalty0 153--182, 1999.

\bibitem[Ivanovs and Boxma(2015)]{ivanovs2015}
Jevgenijs Ivanovs and Onno Boxma.
\newblock A bivariate risk model with mutual deficit coverage.
\newblock \emph{Insurance: Mathematics and Economics}, 64:\penalty0 126--134,
  2015.

\bibitem[Jeanblanc-Picqu{\'e} and Shiryaev(1995)]{jeanblanc1995optimization}
Monique Jeanblanc-Picqu{\'e} and Albert~N Shiryaev.
\newblock Optimization of the flow of dividends.
\newblock \emph{Russian Mathematical Surveys}, 50\penalty0 (2):\penalty0
  257--278, 1995.

\bibitem[Keppo et~al.(2021)Keppo, Reppen, and Soner]{keppo2021}
Jussi Keppo, A.~Max Reppen, and H.~Mete Soner.
\newblock Discrete dividend payments in continuous time.
\newblock \emph{Mathematics of Operations Research}, 46\penalty0 (3):\penalty0
  895--911, 2021.

\bibitem[Liang et~al.(2024)Liang, Xia, and Zou]{liang2024two}
Zongxia Liang, Yi~Xia, and Bin Zou.
\newblock A two-layer stochastic game approach to reinsurance contracting and
  competition.
\newblock \emph{Insurance: Mathematics and Economics}, 119:\penalty0 226--237,
  2024.

\bibitem[Liu and Cheung(2014)]{liucheung2014}
Luyin Liu and Eric C.~K. Cheung.
\newblock On a bivariate risk process with a dividend barrier strategy.
\newblock \emph{Annals of Actuarial Science}, 9\penalty0 (1):\penalty0 3--35,
  2014.

\bibitem[Luo et~al.(2016)Luo, Wang, and Zeng]{luo2016optimal}
Shangzhen Luo, Mingming Wang, and Xudong Zeng.
\newblock Optimal reinsurance: minimize the expected time to reach a goal.
\newblock \emph{Scandinavian Actuarial Journal}, 2016\penalty0 (8):\penalty0
  741--762, 2016.

\bibitem[Schmidli(2001)]{schmidli2001optimal}
Hanspeter Schmidli.
\newblock Optimal proportional reinsurance policies in a dynamic setting.
\newblock \emph{Scandinavian Actuarial Journal}, 2001\penalty0 (1):\penalty0
  55--68, 2001.

\bibitem[Schmidli(2008)]{schmidli2007book}
Hanspeter Schmidli.
\newblock \emph{Stochastic Control in Insurance}.
\newblock Probability and Its Applications. Springer London, 2008.

\bibitem[Strietzel and Heinrich(2022)]{strietzel2022}
Philipp~Lukas Strietzel and Henriette~Elisabeth Heinrich.
\newblock Optimal dividends for a two-dimensional risk model with simultaneous
  ruin of both branches.
\newblock \emph{Risks}, 10\penalty0 (6):\penalty0 116, 2022.

\bibitem[Taksar and Markussen(2003)]{taksar2003optimal}
Michael Taksar and Charlotte Markussen.
\newblock Optimal dynamic reinsurance policies for large insurance portfolios.
\newblock \emph{Finance and Stochastics}, 7:\penalty0 97--121, 2003.

\bibitem[Wang et~al.(2024)Wang, Xu, and Yan]{wang2024}
Wenyuan Wang, Ran Xu, and Kaixin Yan.
\newblock Optimal ratcheting of dividends with capital injection.
\newblock \emph{Mathematics of Operations Research}, 2024.
\newblock forthcoming.

\bibitem[Yang et~al.(2025)Yang, Song, Yao, and Cheng]{yang2025}
Bo~Yang, Ruili Song, Dingjun Yao, and Gongpin Cheng.
\newblock Optimal dividend and proportional reinsurance strategy for the risk
  model with common shock dependence.
\newblock \emph{Stochastic Models}, 41\penalty0 (1):\penalty0 10--37, 2025.

\end{thebibliography}

\appendix
	
\section{Proof of Proposition \ref{prop:hjb}}
\label{app:hjb}

	\begin{proof}
	Let $(0,h]$ be a small interval and $\epsilon>0$. Suppose that for each surplus $X_1(h),X_2(h)>0$, there exists an admissible strategy $u_{\epsilon}$ such that
	\begin{equation}
		J(X_1(h),X_2(h);u_{\epsilon})>V(X_1(h),X_2(h))-\epsilon.
	\end{equation}
	Fix $0\leq c_1\leq \overline c_1$, $0\leq c_2\leq \overline c_2$, and $h>0$. Consider
	\begin{equation*}
		(C_1(t),C_2(t))=
		\begin{cases}
			(c_1,c_2),&\mbox{if $0\leq t\leq \tau \wedge h$}\\
			(C^{X_1(h)}_1(t-h),C^{X_2(h)}_2(t-h)),&\mbox{if $t>h$ and $\tau>h$},
		\end{cases}
	\end{equation*}
	where $C^{X_i(h)}_i(\cdot)$ is the $\epsilon$-optimal process corresponding to the reserve level $X_i(h)$ at time $h$ for $i=1,2$. Define $u:=(\theta_1,\theta_2,C_1,C_2,L_1,L_2)\in\Uc$ and $u_{\epsilon}:=\left(\theta_1,\theta_2,C^{X_1(h)}_1,C^{X_2(h)}_2,L_1,L_2\right)$. Then,
	{\small\begin{equation*}
		\begin{aligned}
			V(x_1,x_2)
			&\geq J(x_1,x_2;u)\\
			&=\mathbb{E}_{(x_1,x_2)}\left[a\int_0^{\tau\wedge h}e^{-\beta t}c_1dt+(1-a)\int_0^{\tau\wedge h}e^{-\beta t}c_2dt\right]\\
			&\quad+\mathbb{E}_{(x_1,x_2)}\left[\mathds{1}_{\{\tau>h\}}\mathbb{E}\left[a\int_h^{\tau}e^{-\beta t}C^{X_1(h)}_1(t-h)dt+(1-a)\int_h^{\tau}e^{-\beta t}C^{X_2(h)}_2(t-h)dt\Big|\mathcal{F}_h\right]\right]\\
			&=[ac_1+(1-a)c_2]\frac{1-\mathbb{E}_{(x_1,x_2)}\left[e^{-\beta(\tau\wedge h)}\right]}{\beta}\\
			&\quad+e^{-\beta h}\mathbb{E}_{(x_1,x_2)}\left[\mathds{1}_{\{\tau>h\}}\mathbb{E}_{(X_1(h),X_2(h))}\left[a\int_0^{\tau-h}e^{-\beta s}C^{X_1(h)}_1(s)ds+(1-a)\int_0^{\tau-h}e^{-\beta s}C^{X_2(h)}_2(s)dt\right]\right]\\
			&=[ac_1+(1-a)c_2]\frac{1-\mathbb{E}_{(x_1,x_2)}\left[e^{-\beta(\tau\wedge h)}\right]}{\beta}+e^{-\beta h}\mathbb{E}_{(x_1,x_2)}\left[\mathds{1}_{\{\tau>h\}}J(X_1(h),X_2(h);u_{\epsilon})\right]\\
			&\geq [ac_1+(1-a)c_2]\frac{1-\mathbb{E}_{(x_1,x_2)}\left[e^{-\beta(\tau\wedge h)}\right]}{\beta}+e^{-\beta h}\mathbb{E}_{(x_1,x_2)}\left[\mathds{1}_{\{\tau>h\}}\left[V(X_1(h),X_2(h))-\epsilon\right]\right]\\
			&\geq [ac_1+(1-a)c_2]\frac{1-\mathbb{E}_{(x_1,x_2)}\left[e^{-\beta(\tau\wedge h)}\right]}{\beta}+e^{-\beta h}\mathbb{E}_{(x_1,x_2)}\left[V(X_1(\tau\wedge h),X_2(\tau\wedge h))\right]-\epsilon.
		\end{aligned}
	\end{equation*}}
	Since $\epsilon$ is arbitrary, then
	\begin{equation}\label{eqn:hjb1}
		\mathbb{E}_{(x_1,x_2)}[V(x_1,x_2)]\geq [ac_1+(1-a)c_2]\frac{1-\mathbb{E}_{(x_1,x_2)}\left[e^{-\beta(\tau\wedge h)}\right]}{\beta}+e^{-\beta h}\mathbb{E}_{(x_1,x_2)}\left[V(X_1(\tau\wedge h),X_2(\tau\wedge h))\right].
	\end{equation}
	Suppose that $V$ is twice continuously differentiable. By It\^{o}'s formula,
	\begin{equation*}
		\begin{aligned}
			V(X_1(\tau\wedge h),X_2(\tau\wedge h))
			&=V(x_1,x_2)+\int_0^{\tau\wedge h}\sum_{i=1}^2\left[\left[\left(\tilde{\kappa}_i-\kappa_i\theta_i\right)\mu_i-c_i\right]\frac{\partial V}{\partial x_i}+\frac{1}{2}\sigma_i^2(1-\theta_i)^2\frac{\partial^2 V}{\partial x_i^2}\right]dt\\
			&\quad+\int_0^{\tau\wedge h}\rho \sigma_1\sigma_2(1-\theta_1)(1-\theta_2)\frac{\partial^2 V}{\partial x_1\partial x_2}dt - \int_0^{\tau\wedge h}\sum_{i=1}^2(1-\theta_i)\sigma_i\frac{\partial V}{\partial x_i}dW_i(t)\\
			&\quad+\sum_{i=1}^2\int_0^{\tau\wedge h} \left[\frac{\partial V}{\partial x_i}(X_1(t-),X_2(t-))-\frac{\partial V}{\partial x_{3-i}}(X_1(t-),X_2(t-))\right]dL^c_i(t)\\
			&\quad+\sum_{X_1(t-)\neq X_1(t),X_2(t-)\neq X_2(t),t\leq (\tau\wedge h)}\left[V(X_1(t),X_2(t))-V(X_1(t-),X_2(t-))\right],
		\end{aligned}			
	\end{equation*}
	where $L_i^c$ is the continuous part of $L_i$. It can be shown that the process\\ $\left\{\int_0^{t}\sum_{i=1}^2(1-\theta_i(s))\sigma_i\frac{\partial V}{\partial x_i}(X_1,X_2)dW_i(s)\right\}_{t\geq 0}$ is a true martingale. Write $\Delta X(t):=X_1(t)-X_1(t-)=X_2(t-)-X_2(t)$ for $t>0$. This implies that $\Delta X(t)=L_1(t)-L_1(t-)$ if $\Delta X(t)>0$ and $\Delta X(t)=L_2(t)-L_2(t-)$ if $\Delta X(t)<0$. Taking expectations and using \eqref{eqn:hjb1} yield
	
		\begin{align}
			0&\geq\frac{e^{-\beta h}}{h}\mathbb{E}_{(x_1,x_2)}\Bigg\{\int_0^{\tau\wedge h}\sum_{i=1}^2\left[\left[\left(\tilde{\kappa}_i-\kappa_i\theta_i\right)\mu_i-c_i\right]\frac{\partial V}{\partial x_i}+\frac{1}{2}\sigma_i^2(1-\theta_i)^2\frac{\partial^2 V}{\partial x_i^2}\right]dt\\
			&\quad+\int_0^{\tau\wedge h}\rho \sigma_1\sigma_2(1-\theta_1)(1-\theta_2)\frac{\partial^2 V}{\partial x_1\partial x_2}dt\\
			&\quad+\sum_{i=1}^2\int_0^{\tau\wedge h} \left[\frac{\partial V}{\partial x_i}(X_1(t-),X_2(t-))-\frac{\partial V}{\partial x_{3-i}}(X_1(t-),X_2(t-))\right]dL^c_i(t)\\
			&\quad+\sum_{\Delta X(t)\neq 0,t\leq (\tau\wedge h)}\left[V(X_1(t-)+\Delta X(t),X_2(t-)-\Delta X(t))-V(X_1(t-),X_2(t-))\right]\Bigg\}\\
			&\quad+ [ac_1+(1-a)c_2]\frac{1-\mathbb{E}_{(x_1,x_2)}\left[e^{-\beta(\tau\wedge h)}\right]}{\beta h}-\frac{1-e^{-\beta h}}{h}V(x_1,x_2).
		\end{align}
	Assuming limits and expectations can be interchanged, then letting $h \to 0$ yields
	\begin{equation*}
		\begin{aligned}
			0&\geq \sum_{i=1}^2\left[\left(\tilde{\kappa}_i-\kappa_i\theta_i\right)\mu_i-c_i\right]\frac{\partial V}{\partial x_i}+\frac{1}{2}\sigma_i^2(1-\theta_i)^2\frac{\partial^2 V}{\partial x_i^2}+\rho \sigma_1\sigma_2(1-\theta_1)(1-\theta_2)\frac{\partial^2 V}{\partial x_1\partial x_2}\\
			&\quad +ac_1+(1-a)c_2-\beta V(x_1,x_2) + \sum_{i=1}^2\left[\frac{\partial V}{\partial x_i}(x_1,x_2)-\frac{\partial V}{\partial x_{3-i}}(x_1,x_2)\right]\Delta x\\
			&\quad + V(x_1+\Delta x,x_2-\Delta x)-V(x_1,x_2),
		\end{aligned}
	\end{equation*}
	where $\Delta x:=X_1(0)=-X_2(0)$. The inequality must hold for any $u\in\Uc$, that is,,
	{\small\begin{align*}
		0\geq \sup_{u\in\Uc}\left\{\Lc^{\vartheta}(V)(x_1,x_2)+\sum_{i=1}^2\left[\frac{\partial V}{\partial x_i}(x_1,x_2)-\frac{\partial V}{\partial x_{3-i}}(x_1,x_2)\right]\Delta x+V(x_1+\Delta x,x_2-\Delta x)-V(x_1,x_2)\right\}.
	\end{align*}}
	Suppose $\frac{\partial V}{\partial x_1}(x_1,x_2)\neq \frac{\partial V}{\partial x_{2}}(x_1,x_2)$. If $\frac{\partial V}{\partial x_i}(x_1,x_2)> \frac{\partial V}{\partial x_{3-i}}(x_1,x_2)$, then $\Delta x$ can be made large enough by transferring an infinite amount of capital to Line $i$, which will make the maximization problem above infeasible. Thus, we must have $\frac{\partial V}{\partial x_1}(x_1,x_2)= \frac{\partial V}{\partial x_{2}}(x_1,x_2)$. This implies that $V(x_1+\Delta x,x_2-\Delta x)=V(x_1,x_2)$ for $\Delta x\in\mathbb{R}$.
	
	Suppose there exists an optimal control strategy $u$ such that $\lim_{t\downarrow 0}u(t)=u(0)$. Then, similarly, we have
	{\small\begin{align*}
		0= \sup_{u\in\Uc}\left\{\Lc^{\vartheta}(V)(x_1,x_2)+\sum_{i=1}^2\left[\frac{\partial V}{\partial x_i}(x_1,x_2)-\frac{\partial V}{\partial x_{3-i}}(x_1,x_2)\right]\Delta x+V(x_1+\Delta x,x_2-\Delta x)-V(x_1,x_2)\right\},
	\end{align*}}
	which completes the proof.
\end{proof}
	
\section{Proof of Lemma \ref{lemma:plateau}}\label{app:plateau}
    \begin{proof}
		We solve \eqref{eqn:optim reinsurance} at the (lower) boundary of $\theta_1$ using the Karush-Kuhn-Tucker (KKT) conditions. Define
		\begin{equation}\label{eqn:Lagrange}\small
			\begin{aligned}
				L(\theta_1,\theta_2,\lambda_1,\lambda_2,\lambda_3,\lambda_4)&=\sum_{i=1}^2\left[\left(1-\theta_i\right)\mu_ig'(x)+\frac{1}{2}\sigma_i^2(1-\theta_i)^2g''(x)\right]+\rho\sigma_1\sigma_2(1-\theta_1)(1-\theta_2)g''(x)-\beta g(x)\\
				&\quad+\lambda_1\theta_1+\lambda_2\theta_2+\lambda_3(1-\theta_1)+\lambda_4(1-\theta_2).
			\end{aligned}
		\end{equation}\normalsize
		The associated KKT conditions are summarized below:
		\begin{equation}\label{eqn:kkt}
			\begin{cases}
				-\mu_1g'(x)-\sigma_1^2(1-\theta_1)g''(x)-\rho\sigma_1\sigma_2(1-\theta_2)g''(x)+\lambda_1-\lambda_3=0\\
				-\mu_2g'(x)-\sigma_2^2(1-\theta_2)g''(x)-\rho\sigma_1\sigma_2(1-\theta_1)g''(x)+\lambda_2-\lambda_4=0\\
				\lambda_1\theta_1=0,\quad\lambda_2\theta_2=0,\quad\lambda_3(1-\theta_1)=0,\quad\lambda_4(1-\theta_2)=0\\
				\theta_1,\theta_2\in[0,1],\quad \lambda_1,\lambda_2,\lambda_3,\lambda_4\geq 0.
			\end{cases}
		\end{equation}
		At the boundary $\theta_1=0$, it follows from the complementary slackness conditions that $\lambda_1\geq 0$ and $\lambda_3=0$. The first two equations of \eqref{eqn:kkt} can then be rewritten as
		\begin{equation}\label{eqn:foc}
			\begin{aligned}
				-\mu_1g'(x)-\sigma_1^2g''(x)-\rho\sigma_1\sigma_2(1-\theta_2)g''(x)+\lambda_1&=0\\
				-\mu_2g'(x)-\sigma_2^2(1-\theta_2)g''(x)-\rho\sigma_1\sigma_2g''(x)+\lambda_2-\lambda_4&=0.
			\end{aligned}
		\end{equation}
		We first consider the case $\lambda_1=0$. Then,
		\begin{equation}\label{eqn:l1=0}
			\begin{aligned}
				1-\theta_2&=-\frac{\mu_1g'(x)}{\rho\sigma_1\sigma_2g''(x)}-\frac{\sigma_1}{\rho\sigma_2}\\
				\lambda_4-\lambda_2&=\frac{\mu_1\sigma_2-\rho\mu_2\sigma_1}{\rho\sigma_1}g'(x)+\frac{\sigma_1\sigma_2(1-\rho^2)}{\rho}g''(x).
			\end{aligned}
		\end{equation}
		If $\lambda_2> 0$ and $\lambda_4=0$, then $\theta_2=0$ and \eqref{eqn:l1=0} can be rewritten as:
		\begin{equation*}
			\begin{aligned}
				1&=-\frac{\mu_1g'(x)}{\rho\sigma_1\sigma_2g''(x)}-\frac{\sigma_1}{\rho\sigma_2}\\
				-\lambda_2&=\frac{\mu_1\sigma_2-\rho\mu_2\sigma_1}{\rho\sigma_1}g'(x)+\frac{\sigma_1\sigma_2(1-\rho^2)}{\rho}g''(x).
			\end{aligned}
		\end{equation*}
		Combining the two equations yields:
		\begin{equation*}
			-\lambda_2=\left[\sigma_1(\mu_2\sigma_1-\rho\mu_1\sigma_2)-\sigma_2(\mu_1\sigma_2-\rho\mu_2\sigma_1)\right]\frac{g''(x)}{\mu_1}.
		\end{equation*}
		From Assumption \ref{assume:w1<w2}, we obtain $\sigma_1(\mu_2\sigma_1-\rho\mu_1\sigma_2)\leq\sigma_2(\mu_1\sigma_2-\rho\mu_2\sigma_1)$. Since $g$ is concave, then $g''\leq0$. Hence,
		\begin{equation*}
			0> -\lambda_2=\left[\sigma_1(\mu_2\sigma_1-\rho\mu_1\sigma_2)-\sigma_2(\mu_1\sigma_2-\rho\mu_2\sigma_1)\right]\frac{g''(x)}{\mu_1}\geq 0,
		\end{equation*}
		which is a contradiction. If $\lambda_2= 0$ and $\lambda_4>0$, then $\theta_2=1$ and \eqref{eqn:l1=0} can be rewritten as:
		\begin{equation*}
			\begin{aligned}
				0&=-\frac{\mu_1g'(x)}{\rho\sigma_1\sigma_2g''(x)}-\frac{\sigma_1}{\rho\sigma_2}\\
				\lambda_4&=\frac{\mu_1\sigma_2-\rho\mu_2\sigma_1}{\rho\sigma_1}g'(x)+\frac{\sigma_1\sigma_2(1-\rho^2)}{\rho}g''(x).
			\end{aligned}
		\end{equation*}
		Combining the two equations yields:
		\begin{equation*}
			0<\lambda_4=\frac{\sigma_1}{\mu_1}(\mu_2\sigma_1-\rho\mu_1\sigma_2)g''(x)\leq 0,
		\end{equation*}
		which is also a contradiction. If $\lambda_2>0$ and $\lambda_4>0$ then it must hold that $\theta_2=1$ and $\theta_2=0$, which is a contradiction. If $\lambda_2=0$ and $\lambda_4=0$, then $\theta_2\in[0,1]$. From the second equation in \eqref{eqn:l1=0}, we obtain:
		\begin{equation*}
			\frac{g'(x)}{g''(x)}=-\frac{\sigma_1^2\sigma_2(1-\rho^2)}{\mu_1\sigma_2-\rho\mu_2\sigma_1}=\frac{w_1}{\gamma_1-1}.
		\end{equation*}
		Substituting this to the first equation in \eqref{eqn:l1=0} yields
		\begin{equation*}
			1-\theta_2=\frac{\sigma_1(\mu_2\sigma_1-\rho\mu_1\sigma_2)}{\sigma_2(\mu_1\sigma_2-\rho\mu_2\sigma_1)}=\frac{w_0}{w_2}\in(0,1),
		\end{equation*}
		which is a candidate solution for $\theta_2$.
		
		The second case to consider is $\lambda_1>0$. If $\lambda_2>0$ and $\lambda_4=0$, then $\theta_2=0$. We then obtain from \eqref{eqn:foc} the following equations:
		\begin{equation*}
			\begin{aligned}
				0<\lambda_1&=\mu_1g'(x)+(\sigma_1^2+\rho\sigma_1\sigma_2)g''(x)\\
				0<\lambda_2&=\mu_2g'(x)+(\sigma_2^2+\rho\sigma_1\sigma_2)g''(x).
			\end{aligned}
		\end{equation*}
		Since Assumption \ref{assume:w1<w2} holds, then $\frac{\sigma_1}{\mu_1}(\sigma_1+\rho\sigma_2)\leq\frac{\sigma_2}{\mu_2}(\sigma_2+\rho\sigma_1)$. Hence, the above inequalities imply that
		\begin{equation*}
			\frac{g'(x)}{g''(x)}<\min\left\{-\frac{\sigma_1}{\mu_1}(\sigma_1+\rho\sigma_2),-\frac{\sigma_2}{\mu_2}(\sigma_2+\rho\sigma_1)\right\}=-\frac{\sigma_2}{\mu_2}(\sigma_2+\rho\sigma_1).
		\end{equation*}
		Since we want $g$ to be twice continuously differentiable, then at $x=w_0$, we must have
		\begin{equation*}
			\frac{-(1-\rho^2)\sigma_1^2\sigma_2}{\mu_1\sigma_2-\rho\mu_2\sigma_1}=\frac{w_0}{\gamma_1-1}=\frac{g'(w_0)}{g''(w_0)}<-\frac{\sigma_2}{\mu_2}(\sigma_2+\rho\sigma_1),
		\end{equation*}
		which is equivalent to $\sigma_1(\mu_2\sigma_1-\rho\mu_1\sigma_2)>\sigma_2(\mu_1\sigma_2-\rho\mu_2\sigma_1)$. This contradicts Assumption \ref{assume:w1<w2}. If $\lambda_2=0$ and $\lambda_4>0$, then $\theta_2=1$. We then obtain from \eqref{eqn:foc} the following equations:
		\begin{equation*}
			\begin{aligned}
				0<\lambda_1&=\mu_1g'(x)+\sigma_1^2g''(x)\\
				0<\lambda_4&=-\mu_2g'(x)-\rho\sigma_1\sigma_2g''(x),
			\end{aligned}
		\end{equation*}
		which implies that $-\frac{\rho\sigma_1\sigma_2}{\mu_2}<\frac{g'(x)}{g''(x)}<-\frac{\sigma^2_1}{\mu_1}$. However, this requires that $\mu_2\sigma_1-\rho\mu_1\sigma_2<0$ holds, which is a contradiction. If $\lambda_2>0$ and $\lambda_4>0$, then the complementary slackness conditions will also lead to a contradiction. If $\lambda_2=0$ and $\lambda_4=0$, then we obtain the following equations:
		\begin{equation*}
			\begin{aligned}
				1-\theta_2&=-\frac{\mu_2g'(x)}{\sigma_2^2g''(x)}-\frac{\rho\sigma_1}{\sigma_2}\\
				0<\lambda_1&=\sigma_1^2(1-\rho^2)g''(x)+\frac{\mu_1\sigma_2-\rho\mu_2\sigma_1}{\sigma_2}g'(x).
			\end{aligned}
		\end{equation*}
		The inequality at $x=w_0$ leads to
		\begin{equation*}
			\frac{w_0}{\gamma_1-1}=\frac{g'(w_0)}{g''(w_0)}<-\frac{\sigma_1^2\sigma_2(1-\rho^2)}{\mu_1\sigma_2-\rho\mu_2\sigma_1}=\frac{w_0}{\gamma_1-1},
		\end{equation*}
		which is a contradiction. This completes the proof.
	\end{proof}

\end{document}